\documentclass{amsart}
\usepackage{eurosym}
\usepackage[all]{xy}
\usepackage{a4wide,amssymb,mathrsfs}
\usepackage{algorithm}
\usepackage{algpseudocode}
\usepackage[unicode=true]{hyperref}
\usepackage{stmaryrd}
\usepackage{algorithm}
\usepackage{algpseudocode}
\usepackage{amsmath, amssymb} 
\usepackage{framed} 
\usepackage{enumitem} 

\newtheorem{theorem}{Theorem}[section]
\newtheorem{Co}[theorem]{Corollary}

\newtheorem{Not}[theorem]{Notation}
\newtheorem{De}[theorem]{Definition}
\newtheorem{Ex}[theorem]{Example}

\newtheorem{Le}[theorem]{Lemma}
\newtheorem{Rem}[theorem]{Remark}
\newtheorem{Pro}[theorem]{Proposition}

\def\mod{\textup{mod}}

\let\al\alpha

\let\ge\geqslant
\let\le\leqslant

\def\0{^{\to}}

\def\Div{\textup{Div}}
\def\Gal{\textup{Gal}}

\def\Ker{\textup{Ker}}
\def\Coker{\textup{Coker}}
\def\Res{\textup{Res}}
\def\Stab{\textup{Stab}}

\def\1{^{-1}}

\def\cref#1#2#3{\left(#2\right.\left|\ #3\right)_{#1}}

\def\C{\textup{C}}

\def\S{{\mathsf S}}

\begin{document}
	
\title{On the factorizations of integers via division algorithms for polynomials}
	
\author[Guram Donadze]{Guram Donadze}
\address{Institute of Cybernetics of Georgian Technical University, Zurab Anjaparidze Str. 5, Tbilisi 0186 Georgia}
\email{gdonad@gmail.com}

\author[Adrian Vasiu]{Adrian Vasiu}
\address{State University of Binghamton, 4400 Vestal Pkwy E, Binghamton, NY 13902 USA}
\email{avasiu@binghamton.edu}
\date{\today}
	
\maketitle

\begin{abstract}
We introduce and study several conditions related to the factorization problem of composite numbers. For this purpose, we employ cyclotomic polynomials, Sylvester resultants, and the Fermat equation. For instance, we show that for $m\in\mathbb N$ and distinct primes $p$ and $q$ with $p$ not dividing $m$, the existence of a solution to the Fermat equation $X^p+Y^p=Z^p$ in positive characteristic $q$ such that $X+Y\neq Z$ and $X, Y$ and $Z$ are $m$-th roots of unity implies the factorization of a composite natural number $N$ that is a multiple of $pq$ at the cost of $O\bigl( \phi(m)[m^3 + m^2(\log N)^2] M(\lfloor \log_2 N\rfloor +1) \bigr)$, where $\phi$ is the Euler's function and $M$ is the multiplication time function for $\mathbb Z$. We also show that such solutions do not exist for many semiprime integers $N$, provided that $m$ is required to have a fixed polynomial upper bound in $\log N$.
\end{abstract}

\medskip
{\bf Keywords:} algorithm, cost, division, factorization, field, integer, polynomial, prime, resultant, and root of unity.

\medskip
{\bf MSC 2020:} 11A51, 11C08, 11T06, 11T22, 11T55, 11Y05, 11Y16, 13P05, and 13P15.

\section{Introduction}\label{S1}

Agrawal, Kayal, and Saxena introduced the first deterministic polynomial-time primality-proving algorithm in 2000, known as the AKS primality test, which does not rely on any hypotheses. The algorithm was published in a scientific journal four years later \cite{AKS}. Specifically, in \cite{AKS}, Theorem 4.1, it was proven that for a sufficiently large integer $n$, if in $\mathbb{Z}[X]$, with $X$ as an indeterminate, the polynomial congruence 
\[
(X+a)^n\equiv X^n+a \: (\text{mod}\: X^r-1, \: n) ,
\]
holds for an appropriately chosen $r\in\mathbb N$ bounded by a polynomial in $\log n$ and all $a\in\mathbb N$ with $a\le\lfloor\sqrt{\phi(r)}\log n\rfloor$, where $\phi:\mathbb N\rightarrow\mathbb N$ the Euler's (totient) function, then $n$ is a prime number.

Suppose $n$ is a composite number. Using the number field sieve, the conjectural cost (running time) to factor $n$ is $e^{\bigl(c+o(1)\bigr)(\log n)^{\frac{1}{3}}\bigl(\log (\log n)\bigr)^{\frac{2}{3}}}$ with $c=\bigl(\frac{64}{9}\bigr)^{\frac{1}{3}}=1.9229994...$ (see \cite{BLP}, Conjecture 11.2; see also \cite{St}, Sections 3 and 11 in which $c=\bigl(\frac{2}{9}\bigr)^{\frac{1}{3}}=1.5262856...$ but only for special composites $n$) and with $c=2\bigl(\frac{5+2\sqrt{6}}{18}\bigr)^{\frac{1}{3}}=1.6385862...$ (see \cite{Copp}, Section 4); this improved the rigorous or conjectural costs of prior algorithms such as a probabilistic one (see \cite{LP}, Theorem), the quadratic sieve algorithm, and the elliptic-curve factorization method (e.g., see \cite{P} and \cite{Coh}, Chapter 10). The best known upper bound on the cost of deterministic integer factorization algorithms is $O\Big(\frac{n^{1/5}\log^{16/5} n}{(\log \log n)^{3/5}}\Big)$ and is due to Harvey and Hittmeir (see \cite{HH}, Theorem 1.1). Their algorithm builds on techniques introduced by Hittmeir in \cite{Hi}, Theorem 1.1 and subsequently developed by Harvay in \cite{Ha}, Theorem 1.1 where the slightly weaker upper bound $O(n^{1/5}\log^{16/5} n)$ is obtained.

A natural question arises: {\it Is it possible to reveal a prime divisor of $n$ using some variation of the AKS algorithm?} Motivated by this question, we consider some conditions on $n$ under which one can find a proper divisor of it in polynomial time. Fortunately, these conditions seem to rarely hold for semi-prime integers, but of course the same ought to be taken into account. So Algorithm 2 we include in Section \ref{S8} on one side is a special-purpose algorithm and on another side, similar to some prior factorization algorithms which benefited from working not only over $\mathbb Z$ but over the rings of integers $O_K$s of number fields $K$ (such as the continued fraction factorization method which is a predecessor to the modern quadratic sieve algorithm; see also the number field sieve), is expected to become a general-purpose algorithm if the role of $\mathbb Z$ in this paper is taken over by a suitable $O_K$. We hope to generalize this paper to the $O_K$s in a future work. 

So currently our algorithm is competitive only in certain cases and in general it is outperformed by the existing integer factorization algorithms used in practice. Our aim is simply to propose a new approach to integer factorization, one that may offer further perspectives for future research.

Semiprimes and the difficulty of their factorizations play a central role in the security of the RSA cryptosystem, which is one of the most widely used public-key encryption schemes in practice. Introduced by Rivest, Shamir, and Adleman in 1977, see \cite{RSA}, RSA relies on the assumption that factoring large semiprimes is computationally infeasible. This assumption underpins the security of digital signatures, secure key exchange, and encryption protocols such as SSL/TLS and PGP. For an in-depth overview of RSA and public-key cryptography, see \cite{PPG}.

To mention one of the condition considered (see Definition \ref{D2}(3)), let $N$ be a natural number with at least two distinct prime divisors. Let $m>0$ be an integer, and suppose there exist two distinct prime divisors $p$ and $q$ of $N$ such that the Fermat equation 
\begin{equation}\label{EQ00}
X^p + Y^p =Z^p
\end{equation} 
has a solution $(X_0, Y_0, Z_0)$ in an algebraic closure $\overline{\mathbb F_q}$ of the finite field $\mathbb{F}_q$, where $X_0$, $Y_0$, and $Z_0$ are $m$-th roots of unity and satisfy $X_0 + Y_0 \neq Z_0$. In this situation, we show that one can compute a proper divisor of $N$ in polynomial time in $m$ (see Theorem \ref{TH3}). Consequently, if such a configuration occurs for some $m$ that is polynomially bounded in $\log N$, then $N$ can be factored in polynomial time. To the best of our knowledge, this is the first instance in which the (Fermat) Equation (\ref{EQ00}), or its simplified version 
\begin{equation}\label{EQ0}
X^p + Y^p =1
\end{equation} 
which is obtained when the triple $(X_0,Y_0,Z_0)$ is replaced by the pair $(\frac{X_0}{Z_0},\frac{Y_0}{Z_0})$, appears in connection with the integer factorization problem. So the role played by the ideals $(X^r-1,n)$ of $\mathbb Z[X]$ in the primality testing problem for $n$ (e.g., see \cite{AKS}) is played in what follows by the ideals $(X^m-1,Y^m-1,X^r+Y^r-a,N)$ of $\mathbb Z[X,Y]$ in factoring $N$, where $r$ is the residue of $p$ modulo $m$ and it is relatively prime to $m$.

Here is a more detailed description of the paper. 

In Section \ref{S2}, we study extensively Equation (\ref{EQ0}) and its generalization
\begin{equation}\label{EQ1.25}
X^p + Y^p =a,
\end{equation}
with $a\in\mathbb Z$, 
in all characteristics via cyclotomic polynomials, Sylvester resultants, and quotient rings of $\mathbb Z[X,Y]$ that depend on two or more entries of the quadruple $(N,m,r,a)$ in $\mathbb N^3\times\mathbb Z$, some of the main results being Lemma \ref{L2} and Theorem \ref{TH1}. Modulo a prime $p$ (i.e, when $N$ is a prime), our working condition that $m$ and $r$ are relatively prime implies that the quotient rings are isomorphic to finite quotients of $\mathbb F_p[X]$ and their dimensions as vector spaces over $\mathbb F_p$ are studied in Section \ref{S3}, the main results being Corollaries \ref{C2} and \ref{C3}. 

Section \ref{S4} proves (see Theorem \ref{TH0}) a congruence modulo products of distinct primes that generalizes the classical Frobenius Identity recalled in Lemma \ref{L1} and applies it (see Propositions \ref{P2} and \ref{P3}) to the quotient rings introduced. 

Section \ref{S5} formulates and studies two main conditions and variations of them that are used to study polynomial-time factorizations of integers (see Definitions \ref{D1} and \ref{D2}). 

The goal of Section \ref{S6} is threefold. First, we review the multiplication time function for $\mathbb Z$ and the modulo $N$ analog of it in order to include a Dichotomic Euclidean Algorithm in one variable modulo $N$ that involves a dichotomy (see the two parts of Proposition \ref{P8}): the output of the algorithm is either the usual greatest common divisor of two polynomials in one variable or is a divisor of $N$. Second, we introduce and study a third related condition (see Definition \ref{D3}). Third, we prove Theorem \ref{TH3}. 

In Section \ref{S7}, we show that Equation (\ref{EQ0}) has no solution among $m$-th roots of unity in $\overline{\mathbb F_q}$, when $6\nmid m$, provided $N\in pq\mathbb Z$ has no common prime factor with the Sylvester resultant of $X^m-1$ and $(1-X)^m-1$ (see Proposition \ref{P10}). A similar result is proved when $6\mid m$; for instance, if $6 \mid m$, then we show that Equation (\ref{EQ0}) has no solution $(X_1, Y_1)$ among $m$-th roots of unity in $\overline{\mathbb F_q}$ such that $X_1+Y_1\neq 1$, for arbitrary $N\in pq\mathbb Z$ that is coprime to the Sylvester resultant of $1+X^6+X^{12}+\cdots+X^{m-6}$ and $(1-X)^m-1$ and satisfies $\gcd(N,7m)=1$ (see Proposition \ref{P10}(5)). As a consequence, for the majority of semiprime numbers $N=pq$, if $m$ is polynomially bounded in $\log N$, then Equation (\ref{EQ0}) together with $X+Y\neq 1$ admits no solution among $m$-th roots of unity in $\overline{\mathbb F_q}$ (see Corollary \ref{C5}).

Section \ref{S8} presents two algorithms that relate to the factorization of $N$. 

{\bf Notations.} Let $(m, r)\in \mathbb{N}^2$ be a pair of natural numbers, and let $(a, b)\in \mathbb{Z}^2$ with $a\leq b$. We use the following notations:

\medskip
\noindent $\llbracket a, b \rrbracket:=\{a, a+1, \ldots, b\}$;

\noindent $\mathbb I_{m,r}:= \llbracket0, m-1\rrbracket\times \llbracket0, r-1\rrbracket$;

\noindent $\mathbb I_m:=\mathbb I_{m,m}$; 

\noindent $\mathbb T:=\mathbb N\setminus\{p^k|p\;\textup{is a prime},\;k\in\mathbb N\cup\{0\}\}$.

\medskip\noindent
So $\mathbb T$ is the set of natural numbers that have more than one prime divisor. 

\section{Sums of roots of unity via quotient rings and resultants}\label{S2}

To study Equation (\ref{EQ1.25}) we introduce several quotients of $\mathbb Z[X,Y]$ as follows.

For a quadruple $(m,r,N,a)\in\mathbb N^3\times\mathbb Z$, we consider the polynomial
$$\Psi_m(X):=X^m-1\in\mathbb Z[X]$$
and the following six ideals of $\mathbb Z[X,Y]$: 
$$I_{m, r;a}:=(X^m-1, Y^m-1, X^r+Y^r-a)=\bigl(\Psi_m(X),\Psi_m(Y),\Psi_r(X)+\Psi_r(Y)+2-a\bigr),$$
$$J_{m, r;a}:=(X^m-1, X^r+Y^r-a)=\bigl(\Psi_m(X),\Psi_r(X)+\Psi_r(Y)+2-a\bigr),$$ 
$$K_m:=(X^m-1,Y^m-1)=\bigl(\Psi_m(X),\Psi_m(Y)\bigr),\quad\quad\quad\quad$$ 
$$\,I_{m,r,N;a}:=(X^m-1, Y^m-1, X^r+Y^r-a,N)=\bigl(\Psi_m(X),\Psi_m(Y),\Psi_r(X)+\Psi_r(Y)+2-a,N\bigr),\quad$$ 
$$J_{m,r,N;a}:=(X^m-1, X^r+Y^r-a,N)=\bigl(\Psi_m(X),\Psi_r(X)+\Psi_r(Y)+2-a,N\bigr),\;\;\,\,$$
and
$$K_{m,N}:=(X^m-1,Y^m-1,N)=\bigl(\Psi_m(X),\Psi_m(Y),N\bigr).\quad\quad\quad\quad\quad$$ 
Between these ideals we have the following inclusions
\begin{equation}\label{EQ1.5}
\xymatrix@R=10pt@C=21pt@L=2pt{
J_{m,r;a} \ar@{}[r]|-*[@]{\subset} \ar@{}[d]|-*[@]{\cap} & I_{m,r;a}\ar@{}[r]|-*[@]{\supset} \ar@{}[d]|-*[@]{\cap} & K_m\ar@{}[d]|-*[@]{\cap}\\
J_{m,r,N;a} \ar@{}[r]|-*[@]{\subset} & I_{m,r,N;a}\ar@{}[r]|-*[@]{\supset} & K_{m,N}.\\}
\end{equation}

The six ideals define six quotient rings 
$$R_{m,r;a}:=\mathbb{Z}[X,Y]/I_{m,r;a},$$ 
$$R_{m,r,N;a}:=\mathbb{Z}[X,Y]/I_{m,r,N;a},$$
$$S_{m,r;a}:=\mathbb{Z}[X,Y]/J_{m,r;a},$$
$$S_{m,r,N;a}:=\mathbb{Z}[X,Y]/J_{m,r,N;a},$$ 
$$T_m:=\mathbb{Z}[X,Y]/K_m,$$
and
$$T_{m,N}:=\mathbb{Z}[X,Y]/K_{m,N}.$$
To Inclusions (\ref{EQ1.5}) correspond ring epimorphisms
\begin{equation*}\label{EQ2}
\xymatrix@R=10pt@C=21pt@L=2pt{
R_{m,r;a} \ar[r]\ar[d] & S_{m,r;a} \ar[d] & T_m\ar[l]\ar[d]\\
R_{m,r,N;a} \ar[r] & S_{m,r,N;a} & T_{m,N}.\ar[l]\\}
\end{equation*}

We have the following direct application of the Division Algorithm for monic polynomials.

\begin{Le}\label{L2}
Let $(m,r)\in\mathbb N^2$. Let $g\in\mathbb{Z}[X]$ be monic of degree $m$. Let $h\in\mathbb Z[X,Y]$ be such that as a polynomial in $Y$ it is monic of degree $r$. Let $Q:=\mathbb Z[X,Y]/(g,h)$. Then the following properties hold.

\medskip
{\bf (1)} As an abelian group $Q$ is free of rank $mr$ having $\{X^iY^j+(g,h)|(i,j)\in\mathbb I_{m,r}\}$ as a $\mathbb Z$-basis. 

\smallskip
{\bf (2)} For each $f\in\mathbb Z[X,Y]$ there exist unique integers $a_{ij}(f)$ indexed by
$(i,j)\in \mathbb I_{m,r}$ such that in $\mathbb Z[X,Y]$ we
have the congruence
\begin{equation}\label{EQ3}
f\equiv \sum_{(i,j)\in\mathbb I_{m,r}} a_{i,j}(f)X^iY^j \pmod {(g,h)}.
\end{equation}

\smallskip
{\bf (3)} Suppose there exists $l\in\mathbb{N}$ such that $f\in l\mathbb Z[X,Y]$. Then we have $a_{i,j}(f)\in l\mathbb Z$ for each $(i,j)\in\mathbb I_{m,r}$.

\smallskip
{\bf (4)} Let $a\in\mathbb Z$. The abelian group $S_{m,r;a}$ is free of rank $mr$ and $\{X^iY^j+J_{m,r;a}|(i,j)\in\mathbb I_{m,r}\}$ is a $\mathbb Z$-basis of it. Also, $S_{m,r,N;a}$ is a free $\mathbb Z/N\mathbb Z$-module of rank $mr$ and $\{X^iY^j+J_{m,r,N;a}|(i,j)\in\mathbb I_{m,r}\}$ is a $\mathbb Z/N\mathbb Z$-basis of it.

\smallskip
{\bf (5)} Let $a\in\mathbb Z$. For each $f\in\mathbb Z[X,Y]$ there exist unique integers $b_{i,j}(f)\in \llbracket0,N-1\rrbracket$ indexed by $(i,j)\in \mathbb I_{m,r}$ such that in $\mathbb Z[X,Y]$ we
have the congruence
\[
f\equiv \sum_{(i,j)\in\mathbb I_{m,r}} b_{i,j}(f)X^iY^j \pmod {J_{m,r,N;a}}.
\]

\smallskip
{\bf (6)} As an abelian group, $T_m$ is free of rank $m^2$ having $\{X^iY^j+K_m|(i,j)\in\mathbb I_m\}$ as a $\mathbb Z$-basis. Also, $T_{m,N}$ is a free $\mathbb Z/N\mathbb Z$-module of rank $m^2$ having $\{X^iY^j+K_{m,N}|(i,j)\in\mathbb I_m\}$ as a $\mathbb Z/N\mathbb Z$-basis.

\smallskip
{\bf (7)} There exist unique integers $c_{i,j}(f)\in \llbracket0,N-1\rrbracket$ indexed by $(i,j)\in \mathbb I_m$ such that in $\mathbb Z[X,Y]$ we
have the congruence
\[
f\equiv \sum_{(i,j)\in\mathbb I_m} c_{i,j}(f)X^iY^j \pmod {K_{m,N}}.
\]
\end{Le}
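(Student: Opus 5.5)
The plan is to prove (1) by a two-step division argument and then obtain the remaining parts as specializations or base changes of it.

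\emph{Part (1).} Consider the intermediate ring $A:=\mathbb Z[X]/(g)$. Since $g$ is monic of degree $m$, the Division Algorithm in $\mathbb Z[X]$ shows that $A$ is free over $\mathbb Z$ with basis $1,X,\dots,X^{m-1}$.

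Next write $\bar h\in A[Y]$ for the image of $h$. It is monic of degree $r$ in $Y$, so division by a monic polynomial over the commutative ring $A$ gives that $A[Y]/(\bar h)$ is a free $A$-module with basis $1,Y,\dots,Y^{r-1}$.

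Since $Q\cong A[Y]/(\bar h)$, combining the two bases shows that $\{X^iY^j\}_{(i,j)\in\mathbb I_{m,r}}$ is a $\mathbb Z$-basis of $Q$, which has rank $mr$. Concretely, existence of the representation comes from reducing $f$ first modulo $h$ in $Y$ (working in $\mathbb Z[X][Y]$) and then reducing each coefficient modulo $g$. Uniqueness comes from uniqueness of remainders at each of the two levels: if $\sum_{(i,j)\in\mathbb I_{m,r}} a_{ij}X^iY^j\in(g,h)$, first pass to $A[Y]$, where uniqueness of remainders forces each coefficient of $Y^j$ to vanish in $A$, and then uniqueness in $A$ forces $a_{ij}=0$.

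\emph{Parts (2) and (3).} Part (2) is a restatement of (1). For (3), take $f=lf'$. Then $a_{ij}(f)=l\,a_{ij}(f')$ by uniqueness and linearity of the coordinates, so every $a_{ij}(f)$ lies in $l\mathbb Z$.

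\emph{Parts (4)--(7).} For (4), apply (1) with $g=X^m-1$ and $h=X^r+Y^r-a$, which is monic of degree $r$ in $Y$. For (6), apply (1) with $h=Y^m-1$, so that $r=m$.

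The mod $N$ statements follow by base change. We have $S_{m,r,N;a}=S_{m,r;a}/NS_{m,r;a}$, and a free $\mathbb Z$-module of rank $mr$ reduces modulo $N$ to a free $\mathbb Z/N\mathbb Z$-module with the image basis; the same argument applies to $T_{m,N}$.

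For (5) and (7), take the integer coordinates from (2) and reduce them into $\llbracket0,N-1\rrbracket$ to get existence. For uniqueness, suppose two such representations are congruent modulo $J_{m,r,N;a}$ (respectively $K_{m,N}$). Then their difference lies in $J_{m,r;a}+N\mathbb Z[X,Y]$. By (1) applied to $N\cdot(\text{stuff})$, its coordinates are all divisible by $N$ (this is exactly (3)). Since both sets of coordinates lie in $\llbracket0,N-1\rrbracket$, they must coincide.

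\emph{Main obstacle.} The only delicate point is the uniqueness in (1): we must rule out a nonzero $\mathbb Z$-combination of the $X^iY^j$ lying in $(g,h)$. This is handled cleanly by the tower $\mathbb Z\subset A\subset A[Y]/(\bar h)$ and by the fact that monic division works over any commutative ring, so I expect no real difficulty.
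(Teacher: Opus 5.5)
Your proposal is correct and follows essentially the same route as the paper: the tower $\mathbb Z\subset \mathbb Z[X]/(g)\subset (\mathbb Z[X]/(g))[Y]/(\bar h)\cong Q$ with monic division at each level, then specialization to $g=X^m-1$ and $h=X^r+Y^r-a$ or $Y^m-1$, and reduction modulo $N$. Your uniqueness argument for (5) and (7), which writes the difference as an element of $J_{m,r;a}+N\mathbb Z[X,Y]$ and applies (3), simply makes explicit what the paper leaves implicit when it says $b_{i,j}(f)$ is the unique residue of $a_{i,j}(f)$ in $\llbracket0,N-1\rrbracket$.
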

\begin{proof}
Let $P:=\mathbb Z[X]/(g)$. As $g(X)$ is a monic polynomial in $X$, the Division Algorithm applied to the polynomial ring $\mathbb{Z}[X]$ gives that for each $f_P\in\mathbb Z[X]$ there exists a unique $(m+1)$-tuple 
$$(q,a_0,\ldots,a_{m-1})\in\mathbb{Z}[X]\times \mathbb{Z}^m$$ 
such that $f_P=qg+\sum_{j=0}^{m-1} a_jX^j$. From this it follows that $P$, as an abelian group, is free of rank $m$ having $\{X^i+(g)|i\in \llbracket0,m-1\rrbracket\}$ as a $\mathbb Z$-basis. 

Let $\bar h$ be the image of $h$ in $P[Y]$. We have a natural identification $Q=P[Y]/(\bar h)$ by the Third Isomorphism Theorem. As $\bar h$ is a monic polynomial in $Y$, a similar argument based on the Division Algorithm applied to the polynomial ring $P[Y]$ gives that $P[Y]/(\bar h)$ as a $P$-module is free of rank $r$ having $\{Y^j+(\bar h)|j\in \llbracket0,r-1\rrbracket\}$ as a $P$-basis and hence as a $\mathbb Z$-module is free of rank $mr$ having $\{[X^i+(g)]Y^j+(\bar h)|(i,j)j\in\mathbb I_{m,r}\}$ as a $\mathbb Z$-basis. Thus, via the identification $Q=P[Y]/(\bar h)$, it follows that part (1) holds. 

Part (2) holds as it is only a reformulation of part (1).

For part (3), let $f_1\in\mathbb{Z}[X,Y]$ be such that $f=lf_1$. From the uniqueness in part (2) we get that $a_{i,j}(f)=la_{i,j}(f_1)\in l\mathbb Z$ for each $(i,j)\in\mathbb I_{m,r}$. So part (3) holds.

For parts (4) to (7) we take $g(X):=X^m-1$. 

For parts (4) and (5) we take $h(X,Y):=X^r+Y^r-a$; thus $(g,h)=J_{m,r;a}$ and $Q=S_{m,r;a}$. The first sentence of part (4) follows from part (1). The second sentence of part (4) follows from the mentioned first sentence by reduction modulo $N$. So part (4) holds.

Part (5) follows by remarking that $b_{i,j}(f)$ is the only integer in the set $\llbracket0, N-1\rrbracket$ such that we have $a_{ij}(f)\equiv b_{ij}(f) \pmod {N}$.

Parts (6) and (7) are proved similarly to parts (4) and (5) with $h(X,Y):=Y^m-1$. 
\end{proof}

\begin{Rem}\normalfont\label{R1}
{\bf (1)} The additive structures of the $\mathbb Z$-algebras $R_{m,r;a}$ 
and $R_{m,r,N;a}$ are more complicated. For instance, for each $l\in\mathbb N$ we have $R_{m,ml;1}=\{0\}$ and hence $R_{m,ml,N;1}=\{0\}$. Similarly, 
$$R_{m,1;a}=\mathbb Z[X,Y]/(X^m-1,Y^m-1,X+Y-a)\cong \mathbb Z[X]/\bigl(X^m-1,(a-X)^m-1\bigr)$$
and hence $R_{m,1,N;a}\cong (\mathbb Z/N\mathbb Z)[X]/\bigl(X^m-1,(a-X)^m-1\bigr)$.

\smallskip
{\bf (2)} Assume $r>m$ is not divisible by $m$. If $s\in \llbracket1, m-1\rrbracket$ is the residue of $r$ modulo $m$, then $I_{m,r;a}=I_{m,s;a}$ and hence $I_{m,r,N;a}=I_{m,s,N;a}$. So for the study of the $S_{m,r;a}$-algebra $R_{m,r;a}$ and of the $S_{m,r,N;a}$-algebra $R_{m,r,N;a}$ we can assume that $r\in\llbracket1, m-1\rrbracket$.

\smallskip
{\bf (3)} The homomorphisms from $\mathbb Z$ to $S_{m,r;a}$ or $S_{m,r,N;a}$ or $R_{m,r;a}$ or $R_{m,r,N;a}$ or $T_m$ or $T_{m,N}$ are integral as at the level of spectra are finite (the targets of the homomorphisms being finitely generated abelian groups by Lemma \ref{L2}(4) and (6)).
\end{Rem}

We consider the algebraic integer 
$$\zeta_m:=e^{\frac{2\pi i}{m}}=\cos\bigl(\frac{2\pi}{m}\bigr)+i\sin\bigl(\frac{2\pi}{m}\bigr)\in\mathbb C$$ 
and the $\mathbb Z$-subalgebra $\mathcal Z_m:=\mathbb Z[\zeta_m]$ of $\mathbb C$ generated by it. Let $\Phi_m\in\mathbb Z[X]$ be the $m$-th cyclotomic polynomial over $\mathbb Z$. Recall that $\Phi_m(X)$ is the minimal polynomial of $\zeta_m$ over $\mathbb Q$; so it is the unique monic irreducible polynomial in $\mathbb Z[X]$ with $\Phi_m(\zeta_m)=0$ and we have a $\mathbb Z$-algebra isomorphism
$$\mathcal Z_m\cong\mathbb Z[X]/\bigl(\Phi_m(X)\bigr)$$
that maps $\zeta_m$ to $X+\bigl(\Phi_m(X)\bigr)$.

Let $\Div(m)$ be the set of positive integers that divide $m$. In $\mathcal Z_m[X]$ we have identities
\begin{equation}\label{EQ4}
\Psi_m(X)=\prod_{i=1}^{m} (X-\zeta_m^i)=\prod_{d\in\Div(m)} \Phi_d(X)\;\;\textup{and}\;\;\Phi_m(X)=\prod_{i\in \llbracket1, m\rrbracket,\gcd(i,m)=1} (X-\zeta_m^i).
\end{equation}

As $\mathbb Z[X]$ is a unique factorization domain whose group of units is $\{-1,1\}$, in what follows the greatest common divisor of two nonzero polynomials $f$ and $g$ in $\mathbb Z[X]$ is assumed to have a positive leading coefficient and is denoted by $\gcd(f,g)$. Hence, if $f$ or $g$ is monic, then $\gcd(f,g)$ is monic and it is the unique monic polynomial of $\mathbb Z[X]$ such that we have an identity of ideals $(f,g)=\bigl(\gcd(f,g)\bigl)$ in $\mathbb Q[X]$. 

For $(c,d)\in\mathbb N^2$ and $a\in\mathbb Z$ we consider the monic polynomials
$$\Upsilon_{c,d;a}(X):=\gcd\bigl(\Phi_c(X),\Phi_d(a-X)\bigr)\in \mathbb Z[X]$$
and
$$\Upsilon_{c;a}(X):=\gcd\bigl(\Psi_c(X),\Psi_c(a-X)\bigr)\in\mathbb Z[X].$$

\begin{Le}\label{L2.5}
Let $(m,a)\in\mathbb N\times \mathbb Z$ and $(c,d)\in\mathbb N^2$. Then the following properties hold.

\medskip
{\bf (1)} If $a\notin\{-2,-1,0,1,2\}$, then $\Upsilon_{c,d;a}(X)=1$.

\smallskip
{\bf (2)} We have $\Upsilon_{1,1;2}(X)=X-1$. Moreover, $\Upsilon_{c,d;2}(X)=1$ if $(c,d)\neq (1,1)$.

\smallskip
{\bf (3)} We have $\Upsilon_{2,2;-2}(X)=X+1$. Moreover, $\Upsilon_{c,d;-2}(X)=1$ if $(c,d)\neq (2,2)$. 

\smallskip
{\bf (4)} Assume that $a=0$. If $c$ is odd (resp.\ even and $\frac{c}{2}$ is odd), then $\Upsilon_{c,d;0}(X)\neq 1$ iff $d=2c$ (resp.\ $d=\frac{c}{2}$), in which case $\Upsilon_{c,d;0}(X)=\Phi_c(X)$ (resp.\ $\Upsilon_{c,d;0}(X)=\Phi_{\frac{c}{2}}(X)$). If $4\mid c$, then $\Upsilon_{c,d;0}(X)\neq 1$ iff $d=c$, in which case $\Upsilon_{c,c;0}(X)=\Phi_c(X)$.

\smallskip
{\bf (5)} We have $\Upsilon_{6,6;1}(X)=X^2-X+1$. If $(c,d)\neq (6,6)$, then $\Upsilon_{c,d;1}(X)=1$.

\smallskip
{\bf (6)} We have $\Upsilon_{3,3; -1}(X)=X^2+X+1$. If $(c,d)\neq (3,3)$, then $\Upsilon_{c,d;-1}(X)=1$.

\smallskip
{\bf (7)} 
If $6\nmid m$ (resp.\ $6\mid m$), then $\Upsilon_{m;1}(X)=1$ (resp.\ $\Upsilon_{m;1}(X)=X^2-X+1$).

\smallskip
{\bf (8)} 
If $3\nmid m$ (resp.\ $3\mid m$), then $\Upsilon_{m;-1}(X)=1$ (resp.\ $\Upsilon_{m;-1}(X)=X^2+X+1$).

\smallskip
{\bf (9)} We have $\Upsilon_{m;2}(X)=X-1$. 

\smallskip
{\bf (10)} If $m$ is even (resp.\ odd), then $\Upsilon_{m;-2}(X)=X+1$ (resp.\ $\Upsilon_{m;-2}(X)=1$).

\smallskip
{\bf (11)} If $m$ is even (resp.\ odd), then $\Upsilon_{m;0}(X)=\Psi_m(X)$ (resp.\ $\Upsilon_{m;0}(X)=1$).
\end{Le}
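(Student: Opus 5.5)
The plan is to reduce every part to one question: when are two roots of unity $\xi,\eta\in\mathbb C$ such that $\xi+\eta=a$? Both $\Phi_c(X)$ and $\Phi_d(a-X)$ are, up to sign, monic and separable over $\mathbb C$. The roots of $\Phi_c(X)$ are the primitive $c$-th roots of unity. The roots of $\Phi_d(a-X)$ are the numbers $a-\eta$ with $\eta$ a primitive $d$-th root of unity. Hence $\Upsilon_{c,d;a}(X)$ is the product of $X-\xi$ over all primitive $c$-th roots $\xi$ for which $a-\xi$ is a primitive $d$-th root of unity. Similarly, by (\ref{EQ4}), $\Upsilon_{m;a}(X)$ is the product of $X-\xi$ over all $m$-th roots $\xi$ with $(a-\xi)^m=1$; these roots are simple because $\Psi_m$ is separable. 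The $\gcd$ in $\mathbb Z[X]$ is monic, so it agrees with the $\gcd$ in $\mathbb C[X]$. This dictionary turns every claim into an elementary statement about pairs $(\xi,\eta)$ of complex roots of unity with $\xi+\eta=a\in\mathbb Z$.

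The key step is classifying these pairs. Since $|\xi+\eta|\le 2$, no pair exists for $|a|\ge 3$, which gives (1). For $a=\pm2$, equality in the triangle inequality forces $\xi=\eta=\pm1$, which gives (2) and (3). For $a=0$ we need $\eta=-\xi$; I will work out the order of $-\xi$ as $2c$, $c/2$ or $c$ according to whether $c$ is odd, $c\equiv 2\pmod 4$, or $4\mid c$, which gives (4). For $a=\pm1$, the condition $|a-\xi|=1$ together with $|\xi|=1$ intersects two unit circles. For $a=1$ this yields $\xi=e^{\pm i\pi/3}$, which are primitive $6$-th roots, and then $\eta=1-\xi=e^{\mp i\pi/3}$ is also a primitive $6$-th root. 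Since $\Phi_6(X)=X^2-X+1$, this gives (5). For $a=-1$ it yields $\xi=e^{\pm 2\pi i/3}$ and $\eta=-1-\xi$, again primitive cube roots, and $\Phi_3(X)=X^2+X+1$ gives (6). In each case I will verify that the resulting $\gcd$ is exactly the stated product. This comes for free, because the set of qualifying $\xi$ is closed under Galois conjugation and equals the full set of roots of the stated cyclotomic factor.

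For (7)--(11) I will use the same dictionary, now with $\Psi_m$. We need $\xi$ with $\xi^m=1$ and $(a-\xi)^m=1$, that is, $\xi$ has order $c\mid m$ and $a-\xi$ has order $d\mid m$. By the classification, $a=1$ requires $c=d=6$, so $6\mid m$. Then $\Upsilon_{m;1}(X)=\Phi_6(X)$, and otherwise it is $1$. For $a=-1$ the condition is $3\mid m$. For $a=2$ we always get $X-1$, since $1\mid m$. For $a=-2$ we need $2\mid m$. For $a=0$ we need $\xi^m=(-\xi)^m=1$, which forces $(-1)^m=1$, i.e.\ $m$ even; in that case every $m$-th root qualifies, so the $\gcd$ is $\Psi_m(X)$. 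Here I must note that the $\gcd$ of $\Psi_m(X)$ and $\Psi_m(-X)=\pm\Psi_m(X)$ is $\Psi_m$ itself, taken monic.

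I expect the only mildly delicate step to be (4): computing the order of $-\xi$ in each of the three cases for $c$, and checking that for $4\mid c$ no other $d$ works. Everything else reduces to the triangle-inequality argument and the explicit intersection of two unit circles.
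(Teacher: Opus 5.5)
Your proposal is correct and is essentially the paper's proof. The paper also identifies $\Upsilon_{c,d;a}$ through the common complex roots: there are none for $|a|>2$, only $X=\pm1$ for $a=\pm2$, the order of $-\zeta$ decides $a=0$, and $\zeta_6^{\pm1}$, resp.\ $\zeta_3^{\pm1}$, are the only roots of unity $\xi$ with $1-\xi$, resp.\ $-1-\xi$, again a root of unity. It obtains (7)--(10) from $\Psi_m=\prod_{d\mid m}\Phi_d$ via $\Upsilon_{c,d;a}\mid\Upsilon_{m;a}\mid\prod_{c,d\mid m}\Upsilon_{c,d;a}$, which is exactly your ``order $c\mid m$, order $d\mid m$'' step. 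Your unit-circle computation supplies the justification that the paper only asserts for $a=\pm1$. For (11) the paper uses the identities $\Psi_m(-X)=\Psi_m(X)$ for $m$ even and $\Psi_m(X)+\Psi_m(-X)=-2$ for $m$ odd, instead of roots.

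One part of (4) must be corrected rather than proved. For $c\equiv 2\pmod 4$ and $d=c/2$, every primitive $c$-th root $\xi$ has $-\xi$ of order $c/2$, so your dictionary gives $\Upsilon_{c,c/2;0}(X)=\Phi_c(X)$. The printed value $\Phi_{c/2}(X)$ cannot hold, because $\Upsilon_{c,d;0}$ divides the irreducible polynomial $\Phi_c$. For instance, $\Upsilon_{2,1;0}(X)=\gcd(X+1,-X-1)=X+1$ and $\Upsilon_{6,3;0}(X)=X^2-X+1$. So instead of saying that the order computation ``gives (4)'', state and prove the corrected value $\Phi_c(X)$. The paper's own proof passes over this misprint as well.
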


\begin{proof}
Part (1) holds as for $|a|>2$, the polynomials $X^c-1$ and $(a-X)^d-1$ have no common complex root.
	
For part (2), $X=1$ is the unique common complex root of the polynomials $X^c-1$ and $(2-X)^d-1$. As $\Phi_c(1)=0$ iff $c=1$, part (2) holds.
	
For part (3), the polynomials $X^c-1$ and $(-2-X)^d-1$ have no common complex root when $2\nmid cd$, and if $c$ and $d$ are even, then $X=-1$ is the unique common complex root of $X^c-1$ and $(-2-X)^d-1$. As $\Phi_c(-1)=0$ iff $c=2$, part (3) holds.
	
For part (4), the complex roots of $\Phi_c(X)$ are simple and are precisely the roots of unity $\zeta$ of order $c$. If $c$ is odd, then $\text{ord}(-\zeta)=2c$ and $\Phi_d(-\zeta)=0$ iff $d=2c$. If $c$ is even and $\frac{c}{2}$ is odd, then $\text{ord}(-\zeta)=\frac{c}{2}$ and $\Phi_d(-\zeta)=0$ iff $d=\frac{c}{2}$. If $4\mid c$, then $\text{ord}(-\zeta)=c$ and $\Phi_d(-\zeta)=0$ iff $d=c$. So part (4) holds.

For part (5), the roots of $\Upsilon_{c,d;1}$ in an algebraic closure of $\mathbb Q$ are simple and are precisely the roots of unity $\zeta$ of order $c$ with the property that $1-\zeta$ is a root of unity of order $d$. But the only roots of unity in $\mathbb C$ such that $1$ minus them are also roots of unity in $\mathbb C$ are $\zeta_6=\cos(\frac{\pi}{3})+i\sin(\frac{\pi}{3})$ and $\overline{\zeta_6}=\zeta_6^5$, both of them being of order $6$. This implies that $\Upsilon_{c,d;1}(X)\neq 1$ iff $c=d=6$ and $\Upsilon_{6,6;1}(X)=(X-\zeta_6)(X-\zeta_6^5)=X^2-X+1$. So part (5) holds.

For part (6), the roots of $\Upsilon_{c,d;-1}$ in an algebraic closure of $\mathbb Q$ are simple and are precisely the roots of unity $\zeta$ of order $c$ with the property that $-1-\zeta$ is a root of unity of order $d$. But the only roots of unity in $\mathbb C$ such that $-1$ minus them are also roots of unity in $\mathbb C$ are $\zeta_6^2$ and $\overline{\zeta_6}^4$, both of them being of order $3$. This implies that $\Upsilon_{c,d;-1}(X)\neq 1$ iff $c=d=3$ and $\Upsilon_{3,3;-1}(X)=(X-\zeta_6^2)(X-\zeta_6^4)=X^2+X+1$. So part (6) holds.

For part (7), note that Equations (\ref{EQ4}) gives that for each $(c,d)\in\Div(m)$ we have divisibilities 
$$\Upsilon_{c,d;1}\mid\Upsilon_{m;1}\mid\prod_{(c,d)\in\Div(m)^2} \Upsilon_{c,d;1}.$$ 
From this and part (5) we get that $\Upsilon_{m;1}\in\{1,\Upsilon_{6,6;1}\}$ is such that $\Upsilon_{m;1}=\Upsilon_{6,6;1}$ iff $6\mid m$. So part (7) holds.

Part (8) is (resp.\ (9) and (10) are) proved similarly to part (7), with the reference to part (5) being replaced by a reference to part (6) (resp.\ parts (2) and (3)).

Part (11) follows from the identity $\Psi_m(X)=\Psi_m(-X)$ (resp.\ $\Psi_m(X)+\Psi_m(-X)=-2$).
\end{proof}

Considering the direct sum decomposition $S_{m,1;a}=\oplus_{i=0}^{m-1} \mathbb Z(X^i+J_{m,1;a})$ of free abelian groups of finite rank by Lemma \ref{L2}(4), we have an identity 
$$\Psi_m(a-X)+J_{m,1;a}=\sum_{i=0}^{m-1} c_{m,i;a}(X^i+J_{m,1;a}),$$ where $c_{m,i;a}:=(-1)^i\binom{m}{i}a^{m-i}\in\mathbb Z$ if $i\in \llbracket1, m-1\rrbracket$ and $c_{m,0;a}:=a^m+(-1)^m-1\in\mathbb Z$.

As $c_{m,i;1}=(-1)^i\binom{m}{i}$ if $i\in \llbracket1, m-1\rrbracket$ and $c_{m,0;1}=(-1)^m$, we have an identity
\begin{equation*}
2^{m-1}-1=(-1)^mc_{m,0;1}+\sum_{i=1}^{m-1} (-1)^ic_{m,i;1}
\end{equation*}
and $\max\{|c_{m,i;1}||i\in \llbracket0, m-1\rrbracket\}=\binom{m}{\lfloor\frac{m}{2}\rfloor}$.

We consider the additive endomorphism $L_{m;a}:S_{m,1;a}\rightarrow S_{m,1;a}$ which is the multiplication by $\Psi_m(a-X)+J_{m,1;a}$. With respect to the basis 
$\{X^i+J_{m,1;a}|i\in \llbracket0, m-1\rrbracket\}$ of $S_{m,1;a}$, the matrix representation of $L_{m;a}$ is
$$\mathbb M_{m;a}:=\begin{bmatrix}
c_{m,0;a} & c_{m,m-1;a} & c_{m,m-2;a} & ... & c_{m,2;a} & c_{m,1;a}\\
c_{m,1;a} & c_{m,0;a} & c_{m,m-1;a} & ... & c_{m,3;a} & c_{m,2;a}\\
c_{m,2;a} & c_{m,1;a} & c_{m,0;a} & ... & c_{m,4;a} & c_{m,3;a}\\
...& ... & ... & ... & ... & ... \\
c_{m,m-2;a} & c_{m,m-3;a} & c_{m,m-4;a} & ... & c_{m,0;a} & c_{m,m-1;a}\\
c_{m,m-1;a} & c_{m,m-2;a} & c_{m,m-3;a} & ... & c_{m,1;a} & c_{m,0;a}\\
\end{bmatrix}.$$
As the cokernel $\Coker(L_{m;a})$ of $L_{m;a}$ is $\mathbb Z[X]/\bigl(\Psi_m(X),\Psi_m(a-X)\bigr)$, from Remark \ref{R1}(1) we get that we have an isomorphism of abelian groups 
\begin{equation}\label{EQ6}
R_{m,1;a}\cong\Coker(L_{m;a}).
\end{equation}
Let 
$$\Delta_{m;a}:=-\det(\mathbb M_{m;a})\in\mathbb Z.$$

For $\bigr(f(X),g(X)\bigr)\in\mathbb Z[X]^2$, let $\Res\bigl(f(X),g(X)\bigr)\in\mathbb Z$ be the Sylvester resultant of $f(X)$ and $g(X)$. For $(c,d)\in\mathbb N^2$, we single out the following resultants
$$\Delta_{c,d;a}:=\Res\bigl(\Phi_c(X),\Phi_d(a-X)\bigr)\in\mathbb Z.$$ 

The following proposition first recalls properties of circular determinants in the context of the circular matrices $\mathbb M_{m;a}$ and second provides identities and estimates that pertain to the $\Delta_{m;a}$s and the $\Delta_{c,d;a}$ based on properties of Sylvester resultants.

\begin{Pro}\label{L3}
Let $m\in\mathbb N$. Then the following properties hold.

\medskip
{\bf (1)} The set of eigenvalues of $\mathbb M_{m;a}$ in $\mathbb C$ is $\{\Psi_m(a-\zeta_m^i)|i\in \llbracket1, m\rrbracket\}$. 

\smallskip
{\bf (2)} Over $\mathcal Z_m[\frac{1}{m}]$, the matrix $\mathbb M_{m;a}$ is conjugate to the diagonal matrix $\mathbb D_{m;a}$ whose entry on the $(i,i)$ position is $\Psi_m(a-\zeta_m^i)=\sum_{j=0}^{m-1} c_{m,j;a}\zeta_m^{ij}$.

\smallskip
{\bf (3)} We have $-\Delta_{m;a}=\prod_{i=1}^m \Psi_m(a-\zeta_m^i)=\Res\bigl(\Psi_m(X),\Psi_m(a-X)\bigr)$.

\smallskip
{\bf (4)} We have identities
$$-\Delta_{m;a}=\prod_{(c,d)\in\Div(m)^2} \Delta_{c,d;a}=\Big(\prod_{c\in\Div(m)} \Delta_{c,c;a}\Big)\Big(\prod_{(c,d)\in\Div(m)^2,\,c<d} \Delta_{c,d;a}^2\Big).$$

{\bf (5)} For $(c,d)\in\mathbb N^2$ we have $\Delta_{c,d;1}=0$ iff $c=d=6$. In particular, $\Delta_{m;1}=0$ iff $6\mid m$.

\smallskip
{\bf (6)} For $(c,d)\in\mathbb N^2$ we have $\Delta_{c,d;-1}=0$ iff $c=d=3$. In particular, $\Delta_{m;-1}=0$ iff $3\mid m$.

\smallskip
{\bf (7)} We have $\Delta_{1,1;1}=-1$ and for $c\in\mathbb N\setminus\{1,6\}$ we have $\Delta_{c,c;1}> 0$. In particular, $\Delta_{m;1}>0$ if $6\nmid m$.\footnote{The minus sign normalization in the definition of $\Delta_{m;a}$ is made in order to get this positivity property.}

\smallskip
{\bf (8)} For $(c,d)\in\mathbb N^2$ we have an inequality $|\Delta_{c,d;a}|\le (|a|+2)^{\phi(c)\phi(d)}$.

\smallskip
{\bf (9)} For $(c,d)\in\mathbb N^2$ and $|a|\ge 3$, we have an inequality $|\Delta_{c,d;a}|\ge (|a|-2)^{\phi(c)\phi(d)}$.
\end{Pro}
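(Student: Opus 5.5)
The plan is to reduce everything to the ring-theoretic description of $L_{m;a}$: it is multiplication by $g(X):=\Psi_m(a-X)$ on $S_{m,1;a}\cong\mathbb Z[X]/(\Psi_m(X))$. For parts (1) and (2) I work over $\mathcal Z_m[\frac{1}{m}]$. By the first identity of (\ref{EQ4}), $\Psi_m(X)=\prod_{i=1}^m(X-\zeta_m^i)$. The differences $\zeta_m^i-\zeta_m^j$ ($i\neq j$) divide $m$ in $\mathcal Z_m$, since $\prod_{j\neq i}(\zeta_m^i-\zeta_m^j)=\Psi_m'(\zeta_m^i)=m\zeta_m^{-i}$. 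Hence they become units after inverting $m$, and the Chinese Remainder Theorem gives an isomorphism
$$\mathcal Z_m[\tfrac1m][X]/(\Psi_m(X))\cong\prod_{i=1}^m\mathcal Z_m[\tfrac1m],\qquad f\mapsto \bigl(f(\zeta_m^i)\bigr)_i .$$
Under this isomorphism multiplication by $g$ becomes the diagonal matrix with entries $g(\zeta_m^i)=\Psi_m(a-\zeta_m^i)=\sum_j c_{m,j;a}\zeta_m^{ij}$. The change of basis is the Vandermonde/DFT matrix $(\zeta_m^{ij})$, whose determinant squared is $\pm m^m$, so it is invertible over $\mathcal Z_m[\frac1m]$. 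This gives (2), and (1) follows at once. Since the determinant is the product of eigenvalues, the first equality in (3) follows. The second equality in (3) is the standard formula $\Res(f,g)=\prod_{f(\alpha)=0}g(\alpha)$ for monic $f$.

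For (4) I use multiplicativity of the resultant in each argument. Both $\Psi_m(X)=\prod_{c\in\Div(m)}\Phi_c(X)$ and $\Psi_m(a-X)=\prod_{d\in\Div(m)}\Phi_d(a-X)$ factor by (\ref{EQ4}), which gives the first identity. For the second identity I note that
$$\Delta_{c,d;a}=\prod_{\zeta}\Phi_d(a-\zeta)=\prod_{\zeta}\prod_{\eta}(a-\zeta-\eta),$$
where $\zeta$ ranges over primitive $c$-th roots of unity and $\eta$ over primitive $d$-th roots of unity. This expression is symmetric in $(c,d)$, so $\Delta_{c,d;a}=\Delta_{d,c;a}$, and the off-diagonal terms pair up into squares.

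Parts (8) and (9) follow from the same product formula. Each factor satisfies $|a|-2\le|a-\zeta-\eta|\le|a|+2$, and there are $\phi(c)\phi(d)$ factors.

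For (5) and (6), a resultant of two polynomials with monic leading term vanishes iff they share a complex root, i.e. iff $\Upsilon_{c,d;\pm1}\neq1$. By Lemma \ref{L2.5}(5),(6) this happens iff $c=d=6$ (resp.\ $c=d=3$). By (4), $\Delta_{m;\pm1}=0$ iff some factor vanishes, i.e. iff $6\mid m$ (resp.\ $3\mid m$).

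Part (7) is the only place where signs need care, and I expect it to be the main (though mild) obstacle. First I compute directly $\Delta_{1,1;1}=\Res(X-1,-X)=-1$. Next, $\Delta_{2,2;1}=\Res(X+1,2-X)=3>0$. For $c\ge3$, $c\neq6$, the primitive roots $\zeta$ split into conjugate pairs $\{\zeta,\bar\zeta\}$. Since $\Phi_c$ has real coefficients, $\Phi_c(1-\bar\zeta)=\overline{\Phi_c(1-\zeta)}$, so $\Delta_{c,c;1}=\prod|\Phi_c(1-\zeta)|^2$ over one element of each pair. This product is nonzero by (5), hence positive. Finally, when $6\nmid m$, I plug into (4): the $c=1$ diagonal factor is $-1$, every other diagonal factor is positive, and every off-diagonal factor is a nonzero square by (5). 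Hence $-\Delta_{m;1}<0$, i.e. $\Delta_{m;1}>0$.
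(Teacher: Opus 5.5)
Your proposal is correct and follows essentially the paper's route. It diagonalizes using the invertibility of the Vandermonde/DFT matrix over $\mathcal Z_m[\frac{1}{m}]$; your CRT formulation is a repackaging of the paper's explicit eigenvectors $v_i$. It then uses multiplicativity of resultants, Lemma~\ref{L2.5}(5),(6), pairing of complex-conjugate roots, and termwise triangle-inequality bounds, just as the paper does.

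In (7) your argument is in fact more accurate than the paper's written proof. You correctly write $\Delta_{c,c;1}$ as a product of $|\Phi_c(1-\zeta)|^2$ over one root from each conjugate pair. You also note that the factor $\Delta_{1,1;1}=-1$ makes $\prod_{c\in\Div(m)}\Delta_{c,c;1}$ negative, which is exactly what gives $\Delta_{m;1}>0$. The paper instead writes that product with $(1-\eta_i)^m-1$ in place of $\Phi_c(1-\eta_i)$, and asserts that $\prod_{c\in\Div(m)}\Delta_{c,c;1}$ is positive, which would contradict the conclusion via (4).
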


\begin{proof}
For $i\in \llbracket1, m\rrbracket$, we consider the column vector $v_i:=\begin{bmatrix}1\\
\zeta_m^{-i}\\
\zeta_m^{-2i}\\
...\\
\zeta_m^{-i(m-1)}\end{bmatrix}$. One computes 
$$\mathbb M_{m;a}v_i=\begin{bmatrix}\sum_{j=0}^{m-1} c_{m,j;a}\zeta_m^{-i(-j)}\\
\sum_{j=0}^{m-1} c_{m,j;a}\zeta_m^{-i(1-j)}\\
...\\
\sum_{j=0}^{m-1} c_{m,j;a}\zeta_m^{-i(m-1-j)}\end{bmatrix}=\Big(\sum_{j=0}^{m-1} c_{m,j;a}\zeta_m^{ij}\Big)v_i=\Psi_m(a-\zeta_m^i)v_i.$$ The determinant of the $m\times m$ matrix $\mathbb V_m:=\begin{bmatrix}v_1 & v_2 & \cdots & v_m\end{bmatrix}$ with entries in $\mathcal Z_m$ is the Vandermonde determinant $W(\zeta_m^{-1},\ldots,\zeta_m^{-m})$ which is nonzero in $\mathcal Z_m$ and thus also in $\mathbb C$. So $\{v_1,\ldots,v_m\}$ is a basis of the complex vector space $\mathbb C^m$ from which part (1) follows.

As $W(\zeta_m^{-1},\ldots,\zeta_m^{-m})$ is nonzero modulo each maximal ideal of $\mathcal Z_m[\frac{1}{m}]$, it is a unit of $\mathcal Z_m[\frac{1}{m}]$. Thus $\{v_1,\ldots,v_m\}$ is a basis of the $\mathcal Z_m[\frac{1}{m}]$-module $\mathcal Z_m[\frac{1}{m}]$ from which part (2) follows. 

From part (2) we get that 
$$-\Delta_{m;a}=\prod_{i=1}^m \Psi_m(a-\zeta_m^i)=\Res\bigl(\Psi_m(X),\Psi_m(a-X)\bigr),$$ 
where the last identity is a standard properties of resultants. So part (3) holds. 

We include a second proof of part (3) based on the (usual) definition of $\Res\bigl(\Psi_m(X),\Psi_m(a-X)\bigr)$ as the determinant of the $2m\times 2m$ matrix defined in $m\times m$ blocks by
$$\mathbb S_{m;a}:=\begin{bmatrix}
1_m & -1_m\\
\mathbb E_{m;a} & \mathbb M_{m;a}-\mathbb E_{m;a}\\
\end{bmatrix},$$
where $1_m$ is the identity matrix of size $m\times m$ and 
$$\mathbb E_{m;a}:=\begin{bmatrix}
(-1)^m & c_{m,m-1;a} & c_{m,m-2;a} & ... & c_{m,3;a} & c_{m,2;a} & c_{m,1;a}\\
0 & (-1)^m & c_{m,m-1;a} & ... & c_{m,4;a} & c_{m,3;a} & c_{m,2;a}\\
0 & 0 & (-1)^m & .... & c_{m,5;a} & c_{m,4;a} & c_{m,3;a}\\
...& ... & ... & ... & ... \\
0 & 0 & 0 & ... & 0 & (-1)^m & c_{m,m-1;a}\\
0 & 0 & 0 & ... & 0 & 0 & (-1)^m\\
\end{bmatrix}.$$
As we have a matrix identity $\mathbb S_{m;a}\begin{bmatrix}
1_m & 1_m\\
0_m & 1_m\\
\end{bmatrix}=\begin{bmatrix}
1_m & 0_m\\
\mathbb E_{m;a} & \mathbb M_{m;a}\
\end{bmatrix}$, where $0_m$ is the zero matrix of size $m\times m$, by taking determinants of this identity it follows that part (3) holds.

Part (4) follows from part (3) and standard properties of resultants of products of polynomials in $\mathbb Z[X]$ applied in the context of the following product decompositions $\Psi_m(X)=\prod_{c\in\Div(m)} \Phi_c(X)$ and $\Psi_m(a-X)=\prod_{d\in\Div(m)} \Phi_d(a-X)$ (cf.\ Equations (\ref{EQ4})).

Parts (5) (resp.\ (6)) follows from Lemma \ref{L2.5}(5) (resp.\ \ref{L2.5}(6)).

For part (7), as $\Delta_{1,1;1}=-1$ and $\Delta_{2,2;1}=\Phi_2\bigl(1-(-1)\bigr)=3$, it suffices to show that for each $c\in\mathbb N\setminus\{1,2,6\}$ we have $\Delta_{c,c;1}>0$. Recall that $\phi(c)$ is even, so there exists $s\in\mathbb N$ such that $\phi(c)=2s$ and the $\phi(c)$ primitive $c$-th roots of unity can be listed as $\{\eta_1,\eta_1^{-1},\ldots,\eta_s,\eta_s^{-1}\}$. So we compute
$$\Delta_{c,c;1}=\prod_{i=1}^{s} [(1-\eta_i)^m-1][(1-\eta_i^{-1})^m-1]=\prod_{i=1}^{s} |(1-\eta_i)^m-1|^2.$$
Thus $\Delta_{c,c;1} \geq 0$. As $\Delta_{c,c;1}\neq 0$ by part (5), we get that $\Delta_{c,c;1}>0$. Thus, if $6\nmid m$, then $\prod_{c\in\Div(m)} \Delta_{c,c;1}>0$. Also, $\prod_{(c,d)\in\Div(m)^2,\,c<d} \Delta_{c,d;a}^2>0$ by part (5). From the last two sentences and part (4) we get that, if $6\nmid m$, then $\Delta_{m;1}>0$. So part (7) holds.

Part (8) holds as Equations (\ref{EQ4}) and repeated applications of the triangle inequalities give
	$$
	\Delta_{c,d;a} = |\underset{(i,j)\in\mathbb I_{c,d},\gcd(i,c)=\gcd(j,d)=1}{\prod} \bigl(a-\zeta_c^i-\zeta_d^j\bigr)|\leq (|a|+2)^{\phi(c)\phi(d)}.$$
	
Part (9) is proved similarly to part (8) based on the inequality $|a-\zeta_c^i-\zeta_d^j|\ge |a|-2$ for each $(i,j)\in\mathbb I_m$. 	
\end{proof}

For a finite set $A$, let $|A|\in\mathbb N\cup\{0\}$ be the number of elements of $A$.

\begin{theorem}\label{TH1} Let $a\in\mathbb Z$ and $(m,r)\in\mathbb N^2$ with $m\ge r$ and $\gcd(m,r)=1$. Then the following properties hold.

\medskip
{\bf (1)} We have an isomorphism $R_{m,r;a}\cong R_{m,1;a}$ of nonzero rings. 

\smallskip
{\bf (2)} If $a$ is odd and $p$ is a prime divisor of $2^m-a^m$, then $R_{m,r,p;a}\neq 0$, i.e., $I_{m,r,p;a}\neq \mathbb Z[X,Y]$.

\smallskip
{\bf (3)} The ring $R_{m,r;a}$ is finite iff $\Upsilon_{m;a}(X)=1$ iff $\Delta_{m;a}\neq 0$ and iff one of the following disjoint conditions holds.

\medskip\noindent
{\bf (3.a)} We have $|a|\ge 3$.

\smallskip\noindent
{\bf (3.b)} We have $a=1$ and $6\nmid m$.

\smallskip\noindent
{\bf (3.c)} We have $a=-1$ and $3\nmid m$.

\smallskip\noindent
{\bf (3.d)} We have $a=0$ and $m$ is odd.

\smallskip\noindent
{\bf (3.e)} We have $a=-2$ and $m$ is odd.

\smallskip
{\bf (4)} If $6$ (resp.\ $3$) divides $m$, then $R_{m,r;1}$ (resp.\ $R_{m,r;-1}$) is a finitely generated abelian group of rank $2$. Also, $R_{m,r;2}$ has rank $1$ and, if $m$ is even, $R_{m,r;-2}$ has rank $1$ and $R_{m,r;0}$ has rank $m$.

\smallskip
{\bf (5)} Asume $R_{m,r;a}$ is finite. Then $|R_{m,r;a}|=|\Delta_{m;a}|$. Moreover, for a prime $p$ we have $p\mid\Delta_{m;a}$ iff $R_{m,r,p;a}\neq 0$.

\smallskip
{\bf (6)} If $m\in 2\mathbb N$ (resp.\ $m\in\mathbb N$), then $(a+1)^m-1$ (resp.\ $(a-1)^m-1$) is an eigenvalue of $\mathbb M_{m;1}$.

\smallskip
{\bf (7)} Let $\Lambda_{m;a}(X):=\sum_{i=0}^{m-1} X^i(a-X)^{m-1-i}\in\mathbb Z[X]$. Then the following properties hold.

\medskip\noindent
{\bf (7.a)} We have identities $(a-X)^m-X^m=(a-2X)\Lambda_{m;a}(X)$, $\Lambda_{m;a}(a)=a^{m-1}$, and 
$$\Delta_{m;a}=(2^m-a^m)\Res\bigl(X^m-1,\Lambda_{m;a}(X)\bigl)=(2^m-a^m)\prod_{i=1}^m \Lambda_{m;a}(\zeta_m^i).$$

\noindent
{\bf (7.b)} If $a\neq 0$ we have an inequality $|\Lambda_{m;a}(\zeta_m^i)|\le \frac{(1+|a|)^m-1}{|a|}$ for each $i\in \llbracket1, m-1\rrbracket$.

\smallskip\noindent
{\bf (7.c)} We have a divisibility $(a^m-2^m) \mid \Delta_{m;a}$ and an inequality $|\Lambda_{m;1}(\zeta_m^i)|\le 2^m-1$ for each $i\in \llbracket1, m-1\rrbracket$.

\smallskip
{\bf (8)} Assume $6\nmid m$. Let $l:=\lfloor\frac{m}{4}\rfloor\in\mathbb N\cup\{0\}$. Let $\epsilon_m$ be $-1$ if $m=4l$, be $0$ if $m=4l+1$ or $m=4l+2$, and be $1$ if $m=4l+3$. Then, if $m$ is even we have inequalities
$$(2^m-1)(\sqrt{2}^m-1)^{2l}<\Delta_{m;1}\le (2^m-1)(\sqrt{2}^m+1)^{2l+2\epsilon_m}(2^m+1)^{2l}$$ 
and if $m$ is odd we have inequalities
$$(\sqrt{2}^m-1)^{2l}<\Delta_{m;1}< (\sqrt{2}^m+1)^{2l+2\epsilon_m}(2^m+1)^{2l}.$$
\end{theorem}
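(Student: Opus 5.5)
The plan is to reduce everything to the one-variable ring $\mathbb Z[X]/\bigl(\Psi_m(X),\Psi_m(a-X)\bigr)$ and then use Proposition \ref{L3} and Lemma \ref{L2.5}.

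\textbf{Part (1).} Since $\gcd(m,r)=1$, choose $u\in\mathbb N$ with $ur\equiv 1\pmod m$. In $R_{m,r;a}$ we have $X^m=Y^m=1$. The substitution $X\mapsto X^u$, $Y\mapsto Y^u$ induces a ring endomorphism of $T_m$, and $X\mapsto X^r$, $Y\mapsto Y^r$ induces its inverse, because $X^{ur}=X$ in $T_m$. This automorphism carries the ideal generated by $X^r+Y^r-a$ onto the ideal generated by $X+Y-a$. Hence $R_{m,r;a}\cong R_{m,1;a}\cong \mathbb Z[X]/\bigl(\Psi_m(X),\Psi_m(a-X)\bigr)$ by Remark \ref{R1}(1). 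This ring is nonzero: the resultant $\Delta_{m;a}$ is a multiple of $2^m-a^m$ (see (7.a) below), so modulo a suitable prime, or via the rank computation when $\Delta_{m;a}=0$, the ring has a nonzero quotient.

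\textbf{Parts (2), (5), (7).} Part (2) follows from (7.a): if $p\mid 2^m-a^m$ with $a$ odd, then $p$ is odd. Take $x=a/2\in\mathbb F_p$; then $x^m=(a-x)^m=a^m/2^m=1$, so $(x,x)$ gives a point and $R_{m,r,p;a}\neq0$.

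For (7.a), the factorization is the difference-of-powers identity, and $\Lambda_{m;a}(a)=a^{m-1}$ is immediate. Using multiplicativity of resultants together with Proposition \ref{L3}(3), we get $\Delta_{m;a}=-\prod_i\bigl((a-\zeta_m^i)^m-1\bigr)=-\prod_i\bigl((a-\zeta^i)^m-\zeta^{im}\bigr)$. This equals $-\prod_i(a-2\zeta^i)\Lambda_{m;a}(\zeta^i)$, and $\prod_i(a-2\zeta^i)=a^m-2^m$ up to the sign that fixes the stated form. Part (7.b) is the triangle inequality termwise: $|\zeta|=1$ and $|a-\zeta|\le |a|+1$, which sums to the geometric bound. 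Part (7.c) is (7.a) with $a=1$.

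For (5), when the ring is finite, (\ref{EQ6}) identifies it with $\Coker(L_{m;a})$. The order of the cokernel of an integer square matrix with nonzero determinant is $|\det|$ (Smith normal form). For the prime statement, tensor with $\mathbb F_p$: $\Coker\otimes\mathbb F_p\neq0$ iff $\det\mathbb M_{m;a}\equiv0\pmod p$.

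\textbf{Parts (3), (4), (6).} For (3), the ring $\mathbb Z[X]/(\Psi_m(X),\Psi_m(a-X))$ is finite iff $\Psi_m(X)$ and $\Psi_m(a-X)$ are coprime over $\mathbb Q$. This holds iff $\Upsilon_{m;a}=1$, iff the resultant is nonzero. The case list then comes from Lemma \ref{L2.5}(1),(7)--(11). Note that for $a=0$ or $a=-2$ with $m$ odd we get $\Upsilon=1$, while $a=2$ always fails by part (9).

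For (4), the rank equals $\dim_{\mathbb Q}\mathbb Q[X]/(\Upsilon_{m;a})=\deg\Upsilon_{m;a}$, again read off Lemma \ref{L2.5}.

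For (6), apply Proposition \ref{L3}(1) with $i=m$, which gives $\zeta=1$ and eigenvalue $(a-1)^m-1$. For $m$ even, take $i=m/2$, which gives $\zeta=-1$ and eigenvalue $(a+1)^m-1$. This presumably concerns $\mathbb M_{m;a}$; I would check whether the statement intends $a$ or $1$.

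\textbf{Part (8)} is the main obstacle. I would write $\Delta_{m;1}=\prod_{i}|\Lambda\text{-type factors}|$ via Proposition \ref{L3}(3) as $\prod_{i=1}^m\bigl(1-(1-\zeta^i)^m\bigr)$. Then pair conjugates $\zeta^i,\zeta^{-i}$ to get $\prod|(1-\zeta^i)^m-1|^2$, and use $|1-e^{i\theta}|=2|\sin(\theta/2)|$. The factor $i=m$ contributes $-1$, which combines with the sign. For $m$ even, $i=m/2$ contributes $2^m-1$.

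The remaining indices split by angle. Those with $|1-\zeta^i|\ge\sqrt2$, i.e.\ $\theta\in[\pi/2,3\pi/2]$, number about $2l$ and give factors between $(\sqrt2^m-1)^2$ and $(2^m+1)^2$. Those with $|1-\zeta^i|<\sqrt2$ give factors in $(0,(\sqrt2^m+1)^2]$, bounded below only by positivity (Proposition \ref{L3}(7)). Counting the indices in each range according to $m\bmod 4$ produces the exponents $2l$ and $2l+2\epsilon_m$.

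The delicate points are the lower bound, which must discard the small factors (they may be less than $1$, so I would need them $\ge1$ or absorb them), and checking the exact counts and the strictness of the inequalities. I would first verify small $m$ numerically to calibrate the counts before writing the argument.
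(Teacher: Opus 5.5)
Your arguments for parts (1)--(7) are essentially the paper's. They use the automorphism of $T_m$ induced by $X\mapsto X^{r'}$, $Y\mapsto Y^{r'}$, and the $\mathbb F_p$-point $(a/2,a/2)$ of $R_{m,1,p;a}$ transported by (1). They also use $R_{m,1;a}\otimes_{\mathbb Z}\mathbb Q=\mathbb Q[X]/(\Upsilon_{m;a})$ together with Lemma \ref{L2.5}, the identification $R_{m,1;a}\cong\Coker(L_{m;a})$, the eigenvalues of Proposition \ref{L3}(1), and $(a-\zeta_m^k)^m-1=(a-2\zeta_m^k)\Lambda_{m;a}(\zeta_m^k)$. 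You are right that (6) should read $\mathbb M_{m;a}$; the paper's own proof uses $\mathbb M_{m;a}$. The inequality in (7.c) comes from (7.b) with $a=1$, not from (7.a). One caveat you share with the paper: for odd $a$, nonvanishing in (1) needs a prime divisor of $2^m-a^m$. There is none for $(m,a)\in\{(1,1),(1,3)\}$, and indeed $R_{1,1;a}\cong\mathbb Z/(2-a)\mathbb Z=0$ there.

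The genuine gap is (8), which you leave as a plan, and the plan breaks in two places.

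\textbf{The lower bound.} For odd $m$, the idea you are missing (and the one the paper uses) is that
\[
(1-\zeta_m^k)^m=(-2\sqrt{-1})^m(-1)^k\sin^m\bigl(\tfrac{\pi k}{m}\bigr)
\]
is purely imaginary. Hence each paired factor is $|(1-\zeta_m^k)^m-1|^2=1+\bigl(2\sin\tfrac{\pi k}{m}\bigr)^{2m}>1$, and the factors with $|1-\zeta_m^k|<\sqrt2$ can simply be dropped. For even $m$ this power is real, and such factors really can be $<1$. For example, $m=16$, $k=2$ gives $(1-\zeta_{16}^2)^{16}=(2-\sqrt2)^8\approx0.0139$, hence a factor $\approx0.972$. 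There you must absorb these factors into the slack of the large ones (for $k$ near $m/2$ they are of size roughly $4^m$, not $2^m$). The paper's ``entirely similar'' skips this step too.

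\textbf{The counts.} Your claim that sorting indices by $m \bmod 4$ yields the stated exponents is false for $m=4l+3$. Since $\frac{l+1}{4l+3}>\frac14$, only $k\in\llbracket1,l\rrbracket$ satisfy $|1-\zeta_m^k|<\sqrt2$. So $\llbracket1,2l+1\rrbracket$ contains $l$ small and $l+1$ large indices, and termwise bounds give $(\sqrt2^m+1)^{2l}(2^m+1)^{2l+2}$, not $(\sqrt2^m+1)^{2l+2}(2^m+1)^{2l}$. The paper's proof makes exactly this miscount: it asserts $|1-\zeta_m^{l+1}|<\sqrt2$.

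In fact the stated upper bound is false for $m=3$. By (7.a), $\Delta_{3;1}=7\prod_{\alpha^3=1}(\alpha^2-\alpha+1)=28$, while $(\sqrt2^3+1)^2=9+4\sqrt2\approx14.66$. The strict inequalities also fail for $m=1$ ($\Delta_{1;1}=1$), and the strict lower bound fails for $m=2$ ($\Delta_{2;1}=3$). So the numerical check you propose would show that (8) cannot be proved as stated. It must be corrected for small $m$, and for $m\equiv3\pmod4$ the stated upper bound can at best come from exploiting slack among the factors, not from a termwise count.
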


\begin{proof}
Let $r'\in \llbracket1, m-1\rrbracket$ be such that $m\mid rr'-1$. The rule $(X,Y)\rightarrow (X^r,Y^r)$ defines an endomorphism $e_r:\mathbb Z[X,Y]\rightarrow\mathbb Z[X,Y]$ such that $e_r(K_m)\subset K_m$; so $e_r$ induces an endomorphism 
$$e_{r,m}:T_m\rightarrow T_m.$$ 
As $X^m-1$ divides $X^{rr'}-X$ and $Y^m-1$ divides $Y^{rr'}-Y$, $e_{r,m}\circ e_{r',m}=e_{r',m}\circ e_{r,m}$ is the identity automorphism of $T_m$. Hence $e_{r,m}$ and $e_{r',m}$ are automorphisms inverse to each other. As $e_{r,m}(X+Y-a+K_m)=X^r+Y^r-a+K_m$, $e_{r,m}$ induces an isomorphism $R_{m,1;a}\rightarrow R_{m,r;a}$. 

If $a$ is even, then the kernel of the epimorphism $\mathbb Z[X,Y]\rightarrow\mathbb F_2$ that maps $X$ and $Y$ to $1$ contains $I_{m,1;a}$ and hence $R_{m,1;a}\neq 0$.

If $a$ and $p$ are as in part (2), then the kernel of the epimorphism $\mathbb Z[X,Y]\rightarrow\mathbb F_p$ that maps $X$ and $Y$ to $a$ times the inverse of $2$ in $\mathbb F_p$ contains $I_{m,1,p;a}$ and in particular it induces an epimorphism $R_{m,1,p;a}\rightarrow\mathbb F_p$. So $R_{m,1,p;a}\neq 0$.

Parts (1) and (2) follow from the last two paragraphs.

For parts (3) and (4), from Remark \ref{R1}(1) we get that $R_{m,1;a}\otimes_{\mathbb Z} \mathbb Q=\mathbb Q[X]/(\Upsilon_{m;a})$. As the ring $R_{m,1;a}$ is finite iff $R_{m,1;a}\otimes_{\mathbb Z} \mathbb Q=0$ by Remark \ref{R1}(3), it follows that $R_{m,1;a}$ is finite iff $\Upsilon_{m;a}(X)=1$. Isomorphism (\ref{EQ6}) implies that $R_{m,1;a}$ is finite iff $\Delta_{m;a}\neq 0$. So to end the proof of part (3) it suffices to show that $\Upsilon_{m;a}(X)=1$ iff one of the conditions (3.a) to (3.e) holds. But this follows from Lemma \ref{L2.5}(1) and (7) to (11) and the fact that $\Upsilon_{m;a}(X)=1$ iff $\Upsilon_{c,d;a}(X)=1$ for each pair $(c,d)\in\Div(m)^2$. So part (3) holds.

For part (4) we can assume $r=1$ by part (1). For $r=1$, part (4) follows from the identity $R_{m,1;a}\otimes_{\mathbb Z} \mathbb Q=\mathbb Q[X]/(\Upsilon_{m;a})$ and the fact that $\Upsilon_{m;1}$ (resp.\ $\Upsilon_{m;-1}$) has degree $2$ by Lemma \ref{L2.5}(7) (resp.\ \ref{L2.5}(8)), $\Upsilon_{m;2}$ has degree $1$ by Lemma \ref{L2.5}(9), and, if $m$ is even, $\Upsilon_{m;-2}$ has degree $1$ by Lemma \ref{L2.5}(10) and $\Upsilon_{m;0}$ has degree $m$ by Lemma \ref{L2.5}(11).

For part (5), based on part (3) and Isomorphism (\ref{EQ6}) we have $|R_{m,r;a}|= |\Delta_{m;a}|$. Thus, for a prime $p$ we have $p\mid\Delta_{m;a}$ iff $R_{m,r,p; a}\neq 0$. So part (5) holds.

Part (6) holds as we have $\zeta_m^{\frac{m}{2}}=-1$ and $\Psi_m\bigl(a-(-1)\bigr)=(a+1)^m-1$ (resp.\ $\zeta_m^m=1$ and $\Psi_m(a-1)=(a-1)^m-1$) is an eigenvalue of $\mathbb M_{m;a}$ by Proposition \ref{L3}(1).

For part (7.a), the first identity follows from the formula for the difference of two $m$-th powers. We have $\Lambda_{m;a}(a)=\sum_{i=0}^{m-1} a^i(a-a)^{m-1-i}=a^{m-1}$. To check the third identity of part (7.a), we note that for each $i\in \llbracket0, m-1\rrbracket$ we have 
$$(a-\zeta_m^i)^m-1=(a-\zeta_m^i)^m-\zeta_m^{mi}+\zeta_m^{mi}-1=(a-2\zeta_m^i)\Lambda_{m;a}(\zeta_m^i).$$
From this and Proposition \ref{L3}(3) we get that
$$-\Delta_{m;a}=-\prod_{i=0}^{m-1} [(a-\zeta_m^i)^m-1]=-\prod_{i=0}^{m-1} (a-2\zeta_m^i)\Lambda_{m;a}(\zeta_m^i)=
-2^m\Res\bigl(x^m-1,\Lambda_{m;a}(X)\bigr)\prod_{i=0}^{m-1} (\frac{a}{2}-\zeta_m^i)$$
$$=-2^m\Res\bigl(x^m-1,\Lambda_{m;a}(X)\bigr)\left(\frac{a^m}{2^m}-1\right)=-(a^m-2^m)\Res\bigl(x^m-1,\Lambda_{m;a}(X)\bigr).$$

For part (7.b), if $i\in \llbracket1, m-1\rrbracket$, then repeated applications of the triangle inequality give that $|\Lambda_{m;a}(\zeta_m^i)|\le \sum_{j=0}^{m-1} (1+|a|)^j=\frac{(1+|a|)^m-1}{|a|}$. So part (7.b) holds.

Part (7.c) follows from parts (7.a) and (7.b).

For part (8), let $s:=\lfloor\frac{m}{2}\rfloor$. Based on Equations (\ref{EQ4}) and Proposition \ref{L3}(3) for $m$ even we have
\begin{equation}\label{EQ7}
\Delta_{m;1}=-\prod_{i=0}^{m-1} [(1-\zeta_m^i)^m-1]=(2^m-1)\prod_{i=1}^{s-1} |(1-\zeta_m^i)^m-1|^2
\end{equation}
and for $m$ odd we have
\begin{equation}\label{EQ8}
\Delta_{m;1}=-\prod_{i=0}^{m-1} [(1-\zeta_m^i)^m-1]=\prod_{i=1}^{s} |(1-\zeta_m^i)^m-1|^2.
\end{equation}
We only prove part (8) for $m=4l+3$ as the other cases are entirely similar. We have $l=2s+1$. For $i\in \llbracket1, l+1\rrbracket$ we have $|1-\zeta_m^i|< \sqrt{2}$ and hence $|(1-\zeta_m^i)^m-1|<\sqrt{2}^m+1$; as the algebraic integer $(1-\zeta_m^i)^m$ is a purely imaginary complex number, we also have $1\le|(1-\zeta_m^i)^m-1|$. For $i\in \llbracket l+2,2l+1\rrbracket$ we have $\sqrt{2}<|1-\zeta_m^i|<2$ and hence $\sqrt{2}^m-1< |(1-\zeta_m^i)^m-1|<2^m+1$. From these inequalities and Equation (\ref{EQ8}) we get that $(\sqrt{2}^m-1)^{2l}<\Delta_{m;1}< (\sqrt{2}^m+1)^{2l+2}(2^m+1)^{2l}$. So part (8) holds for $m=4l+3$.
\end{proof}

\begin{Ex}\normalfont\label{EX1}
Assume $(m,r)\in\mathbb N^2$ with $m$ odd, $m\ge r$, and $\gcd(m,r)=1$. We compute $\Delta_{m;0}=-\prod_{i=1}^m \Psi_m(-\zeta_m^i)=-\prod_{i=1}^m (-2)=(-1)^{m+1}2^m$ by Proposition \ref{L3}(3). So $R_{m,r,p;0}$ is nonzero iff $p=2$ by Theorem \ref{TH1}(5).
\end{Ex}

\section{Variation of degrees of polynomials over finite prime fields}\label{S3}

Let $R$ be a unital commutative ring and $a\in\mathbb Z$. We denote also by $a$ its image via the unique unital ring homomorphism $\mathbb Z\rightarrow R$; so $aR$ is the principal ideal of $R$ generated by $a\in R$.

If $p\nmid m$ and $\zeta$ is a primitive $m$-th root of $\overline{\mathbb F_p}$, let 
$$\Omega_{m,p,\zeta;a}:=\{(i,j)\in\mathbb I_m|\zeta^i+\zeta^j=a\}.$$

\begin{Le}\label{L4}
Let $a\in\mathbb Z$ and $N\in\mathbb N\setminus\{1\}$ be square free. Then the following properties hold.

\medskip
{\bf (1)} For every pair $(m,r)\in\mathbb N^2$ with $\gcd(m,r)=1$, there exists a unique polynomial $\theta_{m,N,r;a}(X)$ in $(\mathbb Z/N\mathbb Z)[X]$ such that the following properties hold.

\medskip\noindent
{\bf (1.a)} For each prime divisor $p$ of $N$, the reduction of $\theta_{m,N,r;a}(X)$ modulo $p$ is monic.

\smallskip\noindent
{\bf (1.b)} We have an identity $\bigl(\Psi_m(X)+(N),\Psi_m(a-X^r)+(N)\bigr)=\bigr(\theta_{m,N,r;a}(X)\bigr)$ between ideals of $(\mathbb Z/N\mathbb Z)[X]$.

\medskip
{\bf (2)} The degree of $\theta_{m,N,r;a}(X)$ does not depend on $r$.
\end{Le}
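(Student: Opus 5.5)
Since $N$ is square free, the plan is to write $N=p_1\cdots p_k$ with distinct primes and use the Chinese Remainder Theorem isomorphism $(\mathbb Z/N\mathbb Z)[X]\cong\prod_{t=1}^k\mathbb F_{p_t}[X]$. Under it, ideals of $(\mathbb Z/N\mathbb Z)[X]$ correspond bijectively to tuples of ideals of the $\mathbb F_{p_t}[X]$. Each $\mathbb F_{p_t}[X]$ is a PID, so the image of the ideal $\bigl(\Psi_m(X),\Psi_m(a-X^r)\bigr)$ there is generated by $g_t:=\gcd\bigl(\Psi_m(X),\Psi_m(a-X^r)\bigr)$ in $\mathbb F_{p_t}[X]$. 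This gcd is nonzero because $\Psi_m$ is monic of degree $m\ge1$, and normalizing it to be monic makes it unique. We then define $\theta_{m,N,r;a}$ as the element corresponding to the tuple $(g_1,\ldots,g_k)$.

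For part (1), existence is immediate, because the product of the principal ideals $(g_t)$ is the principal ideal generated by $(g_1,\dots,g_k)$, which gives (1.b); (1.a) holds by construction. For uniqueness, suppose $\theta$ satisfies (1.a) and (1.b). Reducing (1.b) modulo $p_t$ shows that the reduction of $\theta$ generates $(g_t)$ in $\mathbb F_{p_t}[X]$. By (1.a) that reduction is monic, so it equals $g_t$, since monic generators of a nonzero ideal in $\mathbb F_p[X]$ are unique. By the Chinese Remainder Theorem an element of $(\mathbb Z/N\mathbb Z)[X]$ is determined by its reductions modulo all $p_t$, so $\theta$ is unique. The degree of $\theta$ is then $\max_t\deg g_t$; the coefficients above $\deg g_t$ are $0$ modulo $p_t$.

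For part (2), it suffices to show that for each prime $p\mid N$ the degree of $g_p:=\gcd\bigl(\Psi_m(X),\Psi_m(a-X^r)\bigr)$ in $\mathbb F_p[X]$ does not depend on $r$. This is the main step. If $p\nmid m$, then $\Psi_m$ is separable over $\mathbb F_p$, so $\deg g_p$ is the number of $m$-th roots of unity $x\in\overline{\mathbb F_p}$ such that $(a-x^r)^m=1$. Because $\gcd(m,r)=1$, the map $x\mapsto x^r$ permutes $\mu_m(\overline{\mathbb F_p})$. Hence this number equals $|\{x\in\mu_m : (a-x)^m=1\}|$, which does not involve $r$.

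If $p\mid m$, write $m=p^e m'$ with $p\nmid m'$. Then $\Psi_m(X)=\Psi_{m'}(X)^{p^e}$ and $\Psi_m(a-X^r)=\Psi_{m'}(a-X^r)^{p^e}$ in $\mathbb F_p[X]$, so $g_p$ is the $p^e$-th power of the corresponding gcd for $m'$. Since $\gcd(m',r)=1$, the separable case shows that $\deg g_p$ is $p^e$ times a quantity independent of $r$. The delicate points to check are this reduction when $p\mid m$, which relies on the Frobenius identity $(u-v)^p=u^p-v^p$, and the fact that the multiplicities of common roots are all $p^e$ on both sides.

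An alternative to the root count is to transport the problem through the automorphism $e_{r,m}$ from the proof of Theorem \ref{TH1}(1), via $\mathbb F_p[X]/(\Psi_m(X),\Psi_m(a-X^r))\cong R_{m,r,p;a}\cong R_{m,1,p;a}$. This gives $\deg g_p=\dim_{\mathbb F_p}R_{m,r,p;a}$ independently of $r$ and avoids separability issues altogether. Here one identifies $R_{m,r,p;a}$ with $\mathbb F_p[X]/(\Psi_m(X),\Psi_m(a-X^r))$ by eliminating $Y=a-X^r$ from the relation $X^r+Y^r=a$ after applying $e_{r',m}$. I would try this route first if the bookkeeping in the direct root count becomes awkward.
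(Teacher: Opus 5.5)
Your proposal is correct and follows essentially the paper's route. For (1) you reduce to $N=p$ via the Chinese Remainder Theorem and the uniqueness of monic generators in $\mathbb F_p[X]$; for (2) you write $m=p^e m'$, use $\theta_{m,p,r;a}=\theta_{m',p,r;a}^{p^e}$, and count common roots, where your observation that $x\mapsto x^r$ permutes $\mu_{m'}(\overline{\mathbb F_p})$ is exactly the paper's $r$-independent bijection onto the index set $\Omega_{m',p,\zeta;a}$. Your gcd-of-$p^e$-th-powers step (valid in any UFD) is slightly cleaner than the paper's multiplicity bookkeeping with $r=p^{l_r}s_r$, since it makes the multiplicity check you flagged and any factor of $p$ in $r$ irrelevant.
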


\begin{proof} As $\mathbb Z/N\mathbb Z\cong\prod_{p\mid N,p\,\textup{is a prime}} \mathbb F_p$, to prove part (1) we can assume that $N=p$ is a prime. So part (1) follows from the fact that $\mathbb F_p[X]$ is a principal ideal domain with the property that each nonzero ideal of it is generated by a unique monic polynomial.

As
$$\deg(\theta_{m,N,r;a})=\max\bigl(\deg(\theta_{m,p,r;a})|p\,\text{is a prime divisor of}\,N\bigr),$$ 
to prove part (2) we can assume that $N=p$ is a prime. 

We write $m=p^ls$ and $r=p^{l_r}s_r$, where the quadruple $(l,l_r,s,s_r)\in [\mathbb N\cup\{0\}]^2\times [\mathbb N]^2$ is such that $\gcd(p,s)=\gcd(p,s_r)=1$. As $\gcd(m,r)=1$, we have $ll_r=0$. Let $\zeta\in\overline{\mathbb F_p}$ be a primitive $s$-th root of unity. 

As $\Psi_m(X)+(p)=X^{p^ls}-1+(p)=(X^s-1)^{p^l}+(p)=\Psi_s(X)^{p^l}+(p)\in\mathbb F_p[x]$ and the reduction of $\Psi_s(X)$ modulo $p$ is a monic separable polynomial, we get that each root of $\Psi_m(X)$ in $\overline{\mathbb F_p}$ has multiplicity $p^l$. As $a\in\mathbb Z$, a similar argument gives that each root of $\Psi_m(a-X^r)$ in $\overline{\mathbb F_p}$ has multiplicity $p^{l+l_r}$). 

It follows that $\theta_{m,p,r;a}(X)=\theta_{s,p,r;a}^{p^l}(X)$ and 
$$\theta_{m,p,r;a}(X)=\prod_{(i,j)\in\mathbb I_m,\,(ri,j)\in\Omega_{s,p,\zeta;a}} (X-\zeta^i)^{p^l}.$$ 
As $r$ is relatively prime to $m$, the functions $\{(i,j)\in\mathbb I_m,\,(ri,j)\in\Omega_{s,p,\zeta;a}\}\rightarrow \llbracket0, m-1\rrbracket$ defined by the rule $(i,j)\mapsto j$ are injective and their images do not depend on $r$. In particular, we have an identity $|\{(i,j)\in\mathbb I_m,\,(ri,j)\in\Omega_{s,p,\zeta;a}\}|=|\Omega_{s,p,\zeta;a}|$. Therefore $\deg(\theta_{m,p,r;a})=p^l|\Omega_{s,p,\zeta;a}|$ and hence it is independent of $r$. So part (2) holds.
\end{proof}

For each $m\in\mathbb N$ and every prime $p$, let 
$$t_{m,p;a}:=\deg(\theta_{m,p,r;a})\in\mathbb N\cup\{0\}$$
with $r\in\mathbb Z$ arbitrary subject to being relatively prime to $m$ (cf.\ Lemma \ref{L4}(2)).

We have the following direct consequence of the definition of $t_{m,p;a}$ and Lemma \ref{L4}.

\begin{Co}\label{C1} 
Let $N\in\mathbb N\setminus\{1\}$ be square free, $(m,r)\in\mathbb N^2$ with $\gcd(m,r)=1$, $d\in\mathbb N\cup\{0\}$, and $a\in\mathbb Z$. Then the polynomial $\theta_{m,N,r;a}(X)\in (\mathbb Z/N\mathbb Z)[X]$ is monic of degree $d$ iff for each prime divisor $p$ of $N$ we have $t_{m,p;a}=d$. In particular, $\theta_{m,N,r;a}(X)$ is monic iff $\theta_{m,N,1;a}(X)$ is monic. 
\end{Co}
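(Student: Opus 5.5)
The plan is to reduce everything to the prime factors of $N$ via the Chinese Remainder Theorem, exactly as in the proof of Lemma \ref{L4}. Since $N$ is square free, we have $\mathbb Z/N\mathbb Z\cong\prod_{p\mid N}\mathbb F_p$, and hence $(\mathbb Z/N\mathbb Z)[X]\cong\prod_{p\mid N}\mathbb F_p[X]$. A polynomial in $(\mathbb Z/N\mathbb Z)[X]$ is monic of degree $d$ iff its leading coefficient in degree $d$ is $1$ and all higher coefficients vanish. This holds iff, for each prime $p\mid N$, its reduction modulo $p$ is monic of degree $d$ in $\mathbb F_p[X]$, because a coefficient of $\mathbb Z/N\mathbb Z$ is $1$ (resp.\ $0$) iff all its reductions modulo the primes $p\mid N$ are $1$ (resp.\ $0$).

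Next I will identify the reduction of $\theta_{m,N,r;a}(X)$ modulo $p$ with $\theta_{m,p,r;a}(X)$. Reducing the ideal identity of Lemma \ref{L4}(1.b) modulo $p$ gives $\bigl(\Psi_m(X),\Psi_m(a-X^r)\bigr)=\bigl(\overline{\theta_{m,N,r;a}}\bigr)$ in $\mathbb F_p[X]$, and by Lemma \ref{L4}(1.a) this generator is monic. The monic generator of an ideal of $\mathbb F_p[X]$ is unique, so by the uniqueness part of Lemma \ref{L4}(1) applied with $N=p$, it coincides with $\theta_{m,p,r;a}(X)$. By Lemma \ref{L4}(2) and the definition of $t_{m,p;a}$, its degree is $t_{m,p;a}$. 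Here I should take care that the reduction modulo $p$ is automatically monic by (1.a), so the degree of the reduction equals $t_{m,p;a}$ exactly.

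Combining the two paragraphs: $\theta_{m,N,r;a}$ is monic of degree $d$ iff all reductions modulo $p$ are monic of degree $d$. Since every reduction is monic of degree $t_{m,p;a}$, this holds iff $t_{m,p;a}=d$ for all $p\mid N$. For the final sentence, observe that $\theta_{m,N,r;a}$ is monic iff it is monic of some degree $d$, i.e.\ iff all $t_{m,p;a}$ ($p\mid N$) are equal. That condition does not involve $r$, so it is the same for $r$ and for $r=1$ (note $\gcd(m,1)=1$).

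The only mildly delicate point is the compatibility of $\theta$ with reduction modulo $p$, i.e.\ that the image of the ideal in the factor $\mathbb F_p[X]$ is generated by the image of $\theta_{m,N,r;a}$. This follows immediately from the product decomposition of the ring, so I expect no real obstacle.
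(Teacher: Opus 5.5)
Your proof is correct and follows essentially the same route as the paper, which states the corollary as a direct consequence of Lemma \ref{L4} and the definition of $t_{m,p;a}$. You make explicit the CRT decomposition $(\mathbb Z/N\mathbb Z)[X]\cong\prod_{p\mid N}\mathbb F_p[X]$ and use the uniqueness of monic generators in each $\mathbb F_p[X]$ to identify the reduction of $\theta_{m,N,r;a}$ modulo $p$ with $\theta_{m,p,r;a}$, which is exactly the reasoning the paper leaves implicit.
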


For $(a,m)\in\mathbb Z\times\mathbb N$, let $[a]_m:=a+m\mathbb Z\in\mathbb Z/m\mathbb Z$. We consider the left action 
$$\mathbb Z/m\mathbb Z\times\mathbb I_m\rightarrow\mathbb I_m$$ 
of the multiplicative monoid $\mathbb Z/m\mathbb Z$ on $\mathbb I_m$ defined by the rule $[l]_m\cdot (i,j):=(i_l,j_l)$
with $[li]_m=[i_l]_m$ and $[lj]_m=[j_l]_m$. Let $U_m$ be the group of units of this monoid (equivalently, of the ring $\mathbb Z/m\mathbb Z$). The induced action of $U_m$ on the subset 
$$\mathbb U_m:=\{(i,j)\in\mathbb I_m|1\in\{\gcd(i,m),\gcd(j,m)\}\}$$ of $\mathbb I_m$ is free. Let $\Stab_{m,p,\zeta;a}$ be the stabilizer of the subset $\Omega_{m,p,\zeta;a}$ of $\mathbb I_m$ with respect to the action $U_m\times\mathbb I_m\rightarrow\mathbb I_m$. 

Let $\pi:\mathbb I_m\rightarrow \llbracket0, m-1\rrbracket$ be the first projection. Let 
$$\omega_{m,p,\zeta;a}:=\{i\in \llbracket0, m-1\rrbracket|\exists\, j\in \llbracket0, m-1\rrbracket\,\textup{with}\,(i,j)\in\Omega_{m,p,\zeta;a}\}$$ 
be the image of $\Omega_{m,p,\zeta;a}$ via $\pi$. 

We have the following consequence of Lemma \ref{L4} and Theorem \ref{TH1}(4) and (5) which groups together the basic properties of the $t_{m,p;a}$s.

\begin{Co}\label{C2}
Let $a\in\mathbb Z$ and $m\in\mathbb N$. Let $p$ be a prime. We write $m=p^ls$ with the pair $(l,s)\in (\mathbb N\cup\{0\})\times\mathbb N$ such that $\gcd(p,s)=1$. Then the following properties hold.

\medskip
{\bf (1)}
We have $t_{m,p;a}\in\mathbb N$ iff $p\mid\Delta_{m;a}$.

\smallskip
{\bf (2)} We have an identity $\theta_{m,p,1;a}=\theta_{s,p,1;a}^{p^l}$. In particular, $t_{m,p;a}=p^lt_{s,p;a}$.

\smallskip
{\bf (3)} Assume that $l=0$ (equivalently, $m=s$). Let $\zeta\in\overline{\mathbb F_p}$ be a primitive $m$-th root of unity. Then $t_{m,p;a}=|\Omega_{m,p,\zeta;a}|$.

\smallskip
{\bf (4)} The integer $t_{m,p;a}$ is odd iff $p\mid 2^s-a^s$ and iff $p\mid 2^m-a^m$.

\smallskip
{\bf (5)} The function $\Omega_{m,p,\zeta;a}\rightarrow\omega_{m,p,\zeta;a}$ we get by restricting $\pi$ is a bijection. 

\smallskip
{\bf (6)} We have $0\notin \omega_{m,p,\zeta;1}$.

\smallskip
{\bf (7)} If $m$ is odd, then $t_{m,p;1}\le\frac{m-1}{2}$.

\smallskip
{\bf (8)} Assume that $m$ is odd. For each $i\in\omega_{m,p,\zeta;1}$ and every $[u]_m\in U_m$ with $[ui]_m=[m-i]_m$, we have $[u]_m\notin\Stab_{m,p,\zeta;1}$.\footnote{As $\gcd(m,i)=\gcd(m,m-i)$, such a unit $[u]_m$ always exists.} In particular, $\Stab_{m,p,\zeta;1}$ is a subgroup of $U_m$ of index at least $2$.\end{Co}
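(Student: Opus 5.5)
The plan is to work throughout with $r=1$. There $\theta_{m,p,1;a}$ generates the ideal $\bigl(X^m-1,(a-X)^m-1\bigr)$ of $\mathbb F_p[X]$, so
$$\mathbb F_p[X]/(\theta_{m,p,1;a})\cong R_{m,1,p;a}\cong R_{m,1;a}\otimes\mathbb F_p$$
by Remark \ref{R1}(1). Since $\theta_{m,p,1;a}$ is monic, $t_{m,p;a}$ equals $\dim_{\mathbb F_p}R_{m,1,p;a}$.

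\textbf{Parts (1) and (2).} For part (1), $t_{m,p;a}\ge 1$ iff $R_{m,1,p;a}\neq 0$. When $\Delta_{m;a}\neq0$ this is equivalent to $p\mid\Delta_{m;a}$ by Theorem \ref{TH1}(5). When $\Delta_{m;a}=0$, $p$ divides it trivially, and $R_{m,1;a}$ has positive rank by Theorem \ref{TH1}(3),(4); tensoring with $\mathbb F_p$ then stays nonzero. Part (2) is exactly the identity $\theta_{m,p,r;a}=\theta_{s,p,r;a}^{p^l}$ obtained in the proof of Lemma \ref{L4}, specialized to $r=1$: over $\mathbb F_p$ both generators are $p^l$-th powers of the $s$-versions, and the gcd of $p^l$-th powers is the $p^l$-th power of the gcd.

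\textbf{Parts (3), (5) and (6).} For part (3), $X^m-1$ is separable when $p\nmid m$, so $\theta_{m,p,1;a}$ is squarefree. Its roots are exactly the $\zeta^i$ such that $a-\zeta^i=\zeta^j$ for some $j$, i.e.\ the $\zeta^i$ with $(i,j)\in\Omega_{m,p,\zeta;a}$. For part (5), the index $j$ is determined by $i$ because $\zeta^j=a-\zeta^i$ and $\zeta$ has order $m$, so $\pi$ restricted to $\Omega_{m,p,\zeta;a}$ is injective; surjectivity holds by definition. For part (6), $i=0$ would force $\zeta^j=0$, which is impossible.

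\textbf{Part (4).} By part (2), $t_{m,p;a}=p^lt_{s,p;a}$. This is odd iff $t_{s,p;a}$ is odd (when $p=2$ and $l\ge1$ it is even, and then a separate check is needed). The involution $(i,j)\mapsto(j,i)$ on $\Omega_{s,p,\zeta;a}$ has fixed points exactly the pairs with $2\zeta^i=a$. If $p$ is odd, there is at most one fixed point, namely $\zeta^i=a/2$, and it exists iff $(a/2)^s=1$, i.e.\ iff $p\mid 2^s-a^s$. Hence $t_{s,p;a}$ is odd iff $p\mid 2^s-a^s$. Since $2^m-a^m\equiv(2^s-a^s)^{p^l}\pmod p$, divisibility by $2^s-a^s$ and by $2^m-a^m$ agree. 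The case $p=2$ needs a short separate treatment. There $2\mid 2^s-a^s$ iff $a$ is even. For $l=0$, $p=2$ and $a$ even, the equation becomes $\zeta^i+\zeta^j=0$, which in characteristic $2$ means $i=j$; so $t=m=s$ is odd. For $a$ odd, the equation is $\zeta^i+\zeta^j=1$, which has no fixed points, so $t$ is even. For $l\ge1$, $t$ is even, so this case must be reconciled with the statement. I expect this parity bookkeeping to be the main obstacle.

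\textbf{Parts (7) and (8).} With $m$ odd and $a=1$, there is at most one fixed point $2\zeta^i=1$. The intended approach is to pair $i$ with $m-i$: if $\zeta^i+\zeta^j=1$, then also $\zeta^{-i}+\zeta^{-j}=1$ is needed. This does not hold in general, so instead I use the action of Frobenius and units together with part (8). For part (8), suppose $[u]_m\in\Stab$ with $ui\equiv -i$. From $(i,j)\in\Omega$ we get $(-i,uj)\in\Omega$, so $\zeta^{-i}+\zeta^{uj}=1$. Multiplying $\zeta^i+\zeta^j=1$ by $\zeta^{-i}$ gives $1+\zeta^{j-i}=\zeta^{-i}$, hence $\zeta^{uj}=1-\zeta^{-i}=-\zeta^{j-i}$. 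Applying $u$ again gives $\zeta^{u^2j}=-\zeta^{uj+i}=\zeta^{j}$. Since $-1$ is an odd-order root of unity, this forces a contradiction, because $-1$ is not a power of $\zeta$ when $m$ is odd and $p\neq2$; the case $p=2$ is handled directly. Consequently $\Stab\neq U_m$. For part (7), $\omega$ avoids $0$ by part (6) and cannot contain both $i$ and $m-i$. Indeed, if $\zeta^i+\zeta^j=1$ and $\zeta^{-i}+\zeta^{k}=1$, then $\zeta^k=-\zeta^{j-i}$, again giving $-1\in\langle\zeta\rangle$, which is a contradiction. Therefore $|\omega|\le\frac{m-1}{2}$, and part (5) converts this into the bound on $t_{m,p;1}$.
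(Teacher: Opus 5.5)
For odd $p$ your plan is correct and coincides with the paper's proof part by part. You use Theorem~\ref{TH1}(3)--(5) for (1), and the factorization $X^m-1=(X^s-1)^{p^l}$ over $\mathbb F_p$ from the proof of Lemma~\ref{L4} for (2)--(3). For (4) you use the involution $(i,j)\mapsto(j,i)$ of $\Omega$. For (7)--(8) you observe that $(i,j)\in\Omega_{m,p,\zeta;1}$ gives $1-\zeta^{-i}=-\zeta^{j-i}$, so $i$ and $m-i$ cannot both lie in $\omega_{m,p,\zeta;1}$ without putting $-1$ in $\langle\zeta\rangle$.

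The genuine gap is characteristic $2$, and it cannot be closed, because the statement is false there.
\begin{itemize}
\item In (4) you rightly notice that $t_{m,2;a}=2^lt_{s,2;a}$ is even for $l\ge 1$. This is not a bookkeeping issue to be ``reconciled'': for even $a$ it is a counterexample. For instance, $m=2$, $a=0$ gives $\theta_{2,2,1;0}=(X+1)^2$, so $t_{2,2;0}=2$ although $2\mid 2^2-0^2$. Your own case analysis shows that (4) holds in every case except $p=2$, $l\ge 1$, $a$ even.
\item In (7)--(8), the sentence ``the case $p=2$ is handled directly'' cannot be made good. When $p=2$ we have $-1=1\in\langle\zeta\rangle$, so your contradiction evaporates. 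Concretely, for $m=3$, $p=2$ one gets $\theta_{3,2,1;1}=\gcd\bigl(X^3+1,X(X^2+X+1)\bigr)=X^2+X+1$, hence $t_{3,2;1}=2>\frac{m-1}{2}$. Moreover $\Omega_{3,2,\zeta;1}=\{(1,2),(2,1)\}$ is stable under $[2]_3=[-1]_3$, so $\Stab_{3,2,\zeta;1}=U_3$.
\end{itemize}
The paper's proof has exactly the same blind spot. Its claim that $-\zeta^{j-i}$ is not an $m$-th root of unity silently uses $p\neq 2$, and in (4) it reduces to ``$p$ odd and $l=0$'' without justification. So the correct repair is to add the hypothesis that $p$ is odd in (7) and (8), and to exclude the case $p=2$, $l\ge1$, $a$ even in (4). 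Do not search for a characteristic-$2$ argument.

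Three smaller points.
\begin{itemize}
\item In (7), when $p\mid m$ there is no primitive $m$-th root of unity in $\overline{\mathbb F_p}$, so you must first reduce via (2). Since $m$ is odd, $p$ is odd, and $t_{m,p;1}=p^lt_{s,p;1}\le p^l\frac{s-1}{2}\le\frac{m-1}{2}$.
\item In (8), the conclusion $\Stab_{m,p,\zeta;1}\neq U_m$ needs $\omega_{m,p,\zeta;1}\neq\emptyset$, i.e.\ $p\mid\Delta_{m;1}$ by (1). Otherwise $\Omega_{m,p,\zeta;1}=\emptyset$ and its stabilizer is all of $U_m$.
\item Your ``apply $u$ again'' step only returns $(i,u^2j)\in\Omega_{m,p,\zeta;1}$, i.e.\ $\zeta^{u^2j}=\zeta^j$, which is consistent and proves nothing. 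The contradiction comes entirely from $\zeta^{uj}=-\zeta^{j-i}$. The relevant fact is that for odd $p$ the element $-1$ has order $2$, which does not divide the odd order $m$ of $\langle\zeta\rangle$; it is not that $-1$ has ``odd order''.
\end{itemize}
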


\begin{proof} 
For part (1), we have $t_{m,p;a}>0$ iff the ring $R_{m,1,p;a}$ is nonzero. If $\Delta_{m;a}=0$, then $R_{m,1;a}$ is infinite by Theorem \ref{TH1}(3) and hence it has positive rank and this implies, based on Remark \ref{R1}(3), that the ring $R_{m,1,p;a}$ is nonzero. If $\Delta_{m;a}\neq 0$, $R_{m,1;a}$ is finite by Theorem \ref{TH1}(3) and the ring $R_{m,1,p;a}$ is nonzero iff $p\mid\Delta_{m;a}$ by Theorem \ref{TH1}(4) and (5). Part (1) follows from the last two sentences.

Parts (2) and (3) follow from the proof of Lemma \ref{L4}. 

We have $2^s\equiv a^s\pmod{p}$ iff $2^{p^ls}\equiv a^{p^ls}\pmod{p}$. Thus $p\mid 2^s\equiv a^s$ iff $p\mid 2^m-a^m$. Based on this and part (2), to prove part (4) we can assume that $p$ is odd and $l=0$. We have an involution of $\Omega_{m,p,\zeta;a}=\Omega_{s,p,\zeta;a}$ which maps $(i,j)$ to $(j,i)$. This involution either has no fixed point, in which case $t_{m,p;a}$ is even, or it has a unique fixed point, in which case $t_{m,p;a}$ is odd. Moreover, the involution has a fixed point iff $\frac{a}{2}\in\mathbb F_p$ is an $m$-th roots of unity and thus iff $p\mid 2^m-a^m$. So part (4) holds. 

Part (5) holds as for each $j\in\llbracket0,m-1\rrbracket$, the value of $\zeta^j\in\overline{\mathbb F_p}$ determines $j$.
	
Part (6) holds as for each $j\in\llbracket0,m-1\rrbracket$, we have $\zeta^j\in\overline{\mathbb F_p}\setminus\{0\}$.

To prove part (7), let $(i,j)\in\Omega_{m,p,\zeta;1}$; we have $\{i,j\}\subset \llbracket1, m-1\rrbracket$ by part (6). As $\zeta^i+\zeta^j=1$, we have $1-\zeta^{m-i}=1-\zeta^{-i}=-\zeta^{j-i}$. As $m$ is odd, $-\zeta^{j-i}$ is not an $m$-th root of unity and hence $m-i\notin\omega_{m,p,\zeta;1}$. Thus the involution $\tau$ of $\llbracket1, m-1\rrbracket$ defined by the rule $\tau(l)=m-l$ is such that the union $\tau(\omega_{m,p,\zeta;1})\cup \omega_{m,p,\zeta;1}$ is disjoint. This implies that $2|\omega_{m,p,\zeta;1}|\le m-1$ and hence, based on parts (3) and (5), part (7) holds.

For part (8), let $j\in \llbracket1, m-1\rrbracket$ be such that $(i,j)\in\Omega_{m,p,\zeta;1}$. If $[u]_m\in\Stab_{m,p,\zeta;1}$, then $m-i\in\omega_{m, p,\zeta;1}$, a contradiction to the last paragraph. Thus $[u]_m\notin\Stab_{m,p,\zeta;1}$ and part (8) holds.
\end{proof}

For an integer $N\ge 2$ let $\mathcal P(N)=\mathcal P(-N)\in\Div(N)$ be the largest prime divisor of $N$. If $m\ge 2$ with $6\nmid m$, then we have an inequality $\mathcal P(2^m-1)\le\mathcal P(\Delta_{m;1})$ by Theorem \ref{TH1}(7) which in general is strict. 

\begin{Ex}\normalfont\label{EX2}
{\bf (1)} The prime divisors of $\Delta_{11;1} =101832157445630503$ are $23$, $67$, $89$, and $199$. A direct computation gives that 
	$$\theta_{11,23,1;1}(X)=X^5+9X^4+8X^3+2X^2+4X+1\in\mathbb F_{23}[X],$$ 
	$$\theta_{11,67,1;1}(X)=X^2-X-5\in\mathbb F_{67}[X],\;\;\;\theta_{11,89,1;1}(X)=X+45\in\mathbb F_{89}[X],$$ 
	and 
	$$\theta_{11,199,1;1}(X)=X^2-X-78\in\mathbb F_{199}[X].$$ 
	Therefore $t_{11,23;1}=5$, $t_{11,67;1}=t_{11,199;1}=2$, and $t_{11,89;1}=1$. As $t_{11,23;1}=5=\frac{11-1}{2}$, Corollary \ref{C2}(7) is optimal in general.

\smallskip
{\bf (2)} For $m=11$ we have 
$$\mathcal P(2^{11}-1)=\mathcal P(2047)=89<199=\mathcal P(\Delta_{11;1})$$ and for $m=15$ we have 
$\mathcal P(2^{15}-1)=\mathcal P(32767)=151$ and
$$151<\mathcal P(\Delta_{15;1})=\mathcal P(\Delta_{3,5;1})=\mathcal P(\Delta_{3,15;1})=\mathcal P(\Delta_{5,15;1})=\mathcal P(\Delta_{15,15;1})=271.$$

{\bf (3)} It is easy to check that Theorem \ref{TH1}(8) implies that there exist universal constants $C_1$ and $C_2$ in the interval $(0,1)$ such that $C_1<C_2$ and for each $m\in\mathbb N\setminus 6\mathbb N$ we have inequalities $$2^{C_1m^2}\le\Delta_{m;1}\le 2^{C_2m^2}.$$

{\bf (4)} Assume $a\ge 3$. Then $\Delta_{1,1;a}=a-2>0$, $\Delta_{2,2;a}=a+2>0$ and the proof of Proposition \ref{L3}(5) adapts to give that $\Delta_{m;a}<0$. The positive integers $-\Delta_{m;a}$ grow very fast in terms of $m$; like, $-\Delta_{3;3}=14896$, $-\Delta_{5;3}=803800668451$, and $-\Delta_{7;3}=237162507224588197994176$.
\end{Ex}

\begin{theorem}\label{TH2}
Assume that $6\nmid m$. Let $\mathbb K_m$ be the set of pairs $(i,j)\in\mathbb I_{m-1}$ such that $ij>0$, $i\neq j$, and the stabilizer of $\zeta_m^i+\zeta_m^j$ in the Galois group $\Gal\bigl(\mathbb Q(\zeta_m)/\mathbb Q\bigr)$ is trivial. 

For a prime $p$ we consider the following statements (conditions).
	
\medskip
{\bf (1)} We have inequalities $5\le m<2^m\le p\le 3^{\phi(m)}$ and $m$ is odd.

\smallskip
{\bf (2)} We have $\mathcal Z_m/p\mathcal Z_m\cong\mathbb F_p^{\phi(m)}$ and $m\mid p-1$.

\smallskip
{\bf (3)} We have $t_{m,p;1}\in 2\mathbb N$. 

\smallskip
{\bf (4)} Let $\zeta\in\mathbb F_p\setminus\{0\}$ be a primitive $m$-th root of unity. Then for each $[l]_m\in U_m\setminus\{[1]_m\}$ we have $\Omega_{m,p,\zeta;1}\cap ([l]_m\Omega_{m,p,\zeta;1})=\emptyset$.

\smallskip
{\bf (5)} For each $(i,j)\in\Omega_{m,p,\zeta;1}$, the stabilizer of $\zeta_m^i+\zeta_m^j$ in the Galois group $\Gal\bigl(\mathbb Q(\zeta_m)/\mathbb Q\bigr)$ is trivial.

\smallskip
{\bf (6)} We have $|\mathbb K_m|\ge 2t_{m,p;1}[\phi(m)-1]$ and $t_{m,p;1}[\phi(m)-1]\le (m-1)(m-3)$.

\medskip
If $p\mid\Delta_{m;1}$ and $p>\min\left(2^m-1,3^{\phi(m)}\right)$, then statements (1) to (6) are true.
\end{theorem}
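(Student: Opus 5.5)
The plan is to reduce everything to one norm estimate in cyclotomic fields and then play it off against the lower bound $p\ge 2^m$. The hypothesis $p\mid\Delta_{m;1}$ together with Proposition \ref{L3}(3),(4) gives a pair $(c,d)\in\Div(m)^2$ with $p\mid\Delta_{c,d;1}$. Hence $p$ divides the norm $N_{\mathbb Q(\zeta_k)/\mathbb Q}(\alpha)$ of some $\alpha:=1-\zeta_m^i-\zeta_m^j$, where $k:=\textup{lcm}(c,d)\mid m$ and $\alpha\in\mathbb Q(\zeta_k)$. Since $6\nmid m$, $\alpha\ne 0$ by Proposition \ref{L3}(5). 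Each conjugate of $\alpha$ has absolute value at most $3$, so
$$0<|N_F(\alpha)|\le 3^{[F:\mathbb Q]}$$
for every subfield $F$ of $\mathbb Q(\zeta_m)$ containing $\alpha$ (this is Proposition \ref{L3}(8) in a finer form). The basic principle is: whenever we obtain $p^e\mid N_F(\alpha)$, we get $p^e\le 3^{[F:\mathbb Q]}$.

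\textbf{Step 1: condition (1).} Taking $F=\mathbb Q(\zeta_k)$ gives $p\le 3^{\phi(k)}\le 3^{\phi(m)}$. Therefore the minimum in the hypothesis must be $2^m-1$, and $p\ge 2^m$. If $m$ were even, then $\phi(m)\le m/2$ and $3^{m/2}<2^m$, a contradiction; so $m$ is odd. The cases $m=1,3$ are excluded because there is no prime in $[2^m,3^{\phi(m)}]$, so $m\ge5$.

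\textbf{Step 2: condition (2).} The same comparison forces $k=m$. Indeed, an odd proper divisor $k$ of $m$ satisfies $k\le m/3$, and then $3^{\phi(k)}<3^{m/3}<2^m$. If $\mathfrak P$ is a prime of $\mathcal Z_m$ above $p$ containing $\alpha$ and $f$ is its residue degree, then $p^f\mid N(\alpha)$. Since $p^2\ge 4^m>3^{\phi(m)}$, we get $f=1$. Thus $p$ splits completely in $\mathbb Q(\zeta_m)$, i.e. $m\mid p-1$ and $\mathcal Z_m/p\mathcal Z_m\cong\mathbb F_p^{\phi(m)}$.

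\textbf{Step 3: conditions (3) and (5).} Condition (3) is immediate from Corollary \ref{C2}(4), because $p>2^m-1$ gives $p\nmid 2^m-1$. For (5), suppose $(i,j)\in\Omega_{m,p,\zeta;1}$; transporting via $\zeta_m\mapsto\zeta$, the element $\alpha$ lies in some $\mathfrak P$. If the stabilizer of $\zeta_m^i+\zeta_m^j$, which equals the stabilizer of $\alpha$, were nontrivial, then $\alpha$ would lie in a subfield $F$ with $[F:\mathbb Q]\le\phi(m)/2$. We would get $p\le 3^{\phi(m)/2}<2^m$, a contradiction.

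\textbf{Step 4: condition (4).} If $(i,j)\in\Omega\cap[l]_m\Omega$ with $[l]_m\ne[1]_m$, then both $\mathfrak P$ and $\sigma_l^{\pm1}(\mathfrak P)$ contain $\alpha$. Because $p$ splits completely and the Galois action on the primes above $p$ is free, these are two distinct primes. Hence $p^2\mid N(\alpha)$, which is again impossible.

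\textbf{Step 5: condition (6).} Each $(i,j)\in\Omega$ has $i\ne j$ (as $p\nmid 2^m-1$) and $ij>0$ (Corollary \ref{C2}(6)). By (5), $(i,j)$ lies in $\mathbb K_m$, and so do its Galois translates and their swaps. Condition (4) makes the $U_m$-translates of distinct elements of $\Omega$ disjoint. Counting these orbits, and discarding at most one translate per orbit that could have a zero coordinate, should give $|\mathbb K_m|\ge 2t_{m,p;1}[\phi(m)-1]$. The bound $|\mathbb K_m|\le 2(m-1)(m-3)$ follows from $\mathbb K_m\subset\{(i,j)\in\llbracket1,m-2\rrbracket^2: i\ne j\}$.

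I expect this last orbit count to be the main obstacle: one has to check that the orbits are disjoint and that swapping coordinates does not collapse them. Everything else is the single norm inequality combined with $p\ge 2^m$.
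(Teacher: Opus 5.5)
\paragraph{Parts (1)--(5).} Your arguments here are correct and follow the paper's route. You play the single estimate $0<|N(\alpha)|\le 3^{\phi(m)}$ against $p\ge 2^m$ and $p^2>3^{\phi(m)}$. The paper packages (2), (4) and (5) as ``$s_0q=1$'', where $s_0$ is the number of primes above $p$ containing $\alpha$ and $q$ is the residue degree. Your residue-degree argument, two-distinct-conjugate-primes argument and fixed-field norm argument amount to the same thing. Passing through $\Delta_{c,d;1}$ and then forcing $\mathrm{lcm}(c,d)=m$ is also fine; the paper takes the norm of the whole product from $\mathbb Q(\zeta_m)$ directly.

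There is one slip. Your interval argument does not exclude $m=1$, since $[2^1,3^{\phi(1)}]=[2,3]$ contains primes. Use instead that $\alpha=-1$ is a unit there, i.e.\ $\Delta_{1;1}=1$, as the paper does.

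\paragraph{The gap in (6).} The genuine gap is in (6), at exactly the point you flagged, and it cannot be closed the way you propose. The set $\Omega_{m,p,\zeta;1}$ is already invariant under $(i,j)\mapsto(j,i)$. Hence the swap of a translate $[l]_m(i,j)$ is $[l]_m(j,i)$, which is a translate of another element of $\Omega_{m,p,\zeta;1}$ and has already been counted. Swapping produces nothing new. By (4) and (5), $U_m\cdot\Omega_{m,p,\zeta;1}$ consists of exactly $t_{m,p;1}$ free orbits, i.e.\ $t_{m,p;1}\phi(m)$ pairs. Since $\phi(m)\ge 4$, we have $t_{m,p;1}\phi(m)<2t_{m,p;1}[\phi(m)-1]$, so no bookkeeping of these translates can give the factor $2$. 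Your ``zero coordinate'' discard is also misplaced: $l$ is a unit and $i,j\not\equiv 0$, so no translate has a zero coordinate. What the literal $\mathbb I_{m-1}=\llbracket0,m-2\rrbracket^2$ could exclude are translates with a coordinate equal to $m-1$.

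\paragraph{What the paper does.} The paper's proof of (6) is this same count, namely the disjoint union
$\bigsqcup_{(i,j)\in\Omega_{m,p,\zeta;1}}\{[l]_m(i,j)\mid [l]_m\in U_m\setminus\{[1]_m\}\}$, which has $t_{m,p;1}[\phi(m)-1]$ elements. What it actually establishes is $|\mathbb K_m|\ge t_{m,p;1}[\phi(m)-1]$. Getting the factor $2$ would need pairs of $\mathbb K_m$ outside $U_m\cdot\Omega_{m,p,\zeta;1}$, and neither argument supplies them.

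For the second inequality, the paper does not pass through a bound on $|\mathbb K_m|$. By (5), these pairs satisfy $i+j\ne m$, because complex conjugation fixes $\zeta_m^i+\zeta_m^{-i}$. So they lie in $\{(i,j)\mid ij>0,\ i\ne j,\ i+j\ne m\}$, a set with $(m-1)(m-3)$ elements. As written, your derivation of $t_{m,p;1}[\phi(m)-1]\le (m-1)(m-3)$ goes through the unproved factor-$2$ bound, so you should replace it with this direct count.
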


\begin{proof} As $\Delta_{1;1}=1$, $\Delta_{2;1}=3$, and $\Delta_{3;1}=7$, we have $m\ge 4$. So $p\ge 2^m>m$ and hence $p\nmid m$. Let $n\in \mathbb{Z}$ be such that $p n =\Delta_{m;1}$. We have $n\in\mathbb N$ by Proposition \ref{L3}(7).
	
By Proposition \ref{L3}(3), using Equations (\ref{EQ4}) we have
\begin{equation}\label{EQ9}
-pn=-\Delta_{m;1}=\prod_{i=0}^{m-1} [(1-\zeta_m^i)^m -1]=\prod_{i=0}^{m-1}\prod_{j=0}^{m-1} (1-\zeta_m^i - \zeta_m^j)=\prod_{(i,j)\in\mathbb I_m} (1-\zeta_m^i - \zeta_m^j). 
\end{equation}
Applying the Galois norm $N_{\mathbb{Q}(\zeta_m)/\mathbb{Q}}$ to the previous equality, we get:
$$
N_{\mathbb{Q}(\zeta_m)/\mathbb{Q}}(-pn) = \prod_{(i,j)\in\mathbb I_m} N_{\mathbb{Q}(\zeta_m)/\mathbb{Q}}(1-\zeta_m^i - \zeta_m^j). 
$$	
As $N_{\mathbb{Q}(\zeta_m)/\mathbb{Q}}(-pn)=(-pn)^{\phi(m)}$, and as $N_{\mathbb{Q}(\zeta_m)/\mathbb{Q}}(1-\zeta_m^i - \zeta_m^j)\in \mathbb{Z}$ for all $(i,j)\in\mathbb I_m$, there exist a pair $(i_0,j_0)\in\mathbb I_m$ such that $p$ divides $N_{\mathbb{Q}(\zeta_m)/\mathbb{Q}}(1-\zeta_m^{i_0} - \zeta_m^{j_0})$. As $p$ does not divide $N_{\mathbb{Q}(\zeta_m)/\mathbb{Q}}(-1)=(-1)^{\phi(m)}$, we have $(i_0,j_0)\neq (0,0)$. Thus $i_0j_0>0$.
As
$$
N_{\mathbb{Q}(\zeta_m)/\mathbb{Q}}(1-\zeta_m^{i_0} - \zeta_m^{j_0})=\prod_{k\in \llbracket0, m-1\rrbracket,\gcd(k,m)=1} (1-\zeta_m^{ki_0} - \zeta_m^{kj_0}),
$$
$N_{i_0,j_0}:=|N_{\mathbb{Q}(\zeta_m)/\mathbb{Q}}(1-\zeta_m^{i_0} - \zeta_m^{j_0})|$ is a positive integer bounded from above by
$$
\prod_{k\in \llbracket0, m-1\rrbracket,\gcd(k,m)=1} 1+|\zeta_m^{ki_0}|+|\zeta_m^{kj_0}|= 3^{\phi (m)}.
$$
Thus $p\le 3^{\phi (m)}$. As $p>\min(2^m-1,3^{\phi(m)})$ it follows that $m<2^m\le p\le 3^{\phi(m)}$.

If $m$ is even, then $\phi(m)\le\frac{m}{2}$ and hence $3^{\phi(m)}\le \sqrt{3}^m<2^m$, a contradiction. Thus $m\ge 5$ is odd. So statement (1) is true.

Let $q\in\mathbb N$ be the order of $p$ modulo $m$; so $r:=\frac{\phi(m)}{q}\in\mathbb N$. As $p\nmid m$, $\mathcal Z_m/p\mathcal Z_m\cong\mathbb F_{p^q}^r$. So there exist exactly $r$ prime (equivalently, maximal) ideals $\mathfrak p_1,\ldots,\mathfrak p_r$ of $\mathcal Z_m$ that contain $p$; they are permuted transitively by the Galois group $\mathcal G_m:=\Gal\bigl(\mathbb Q(\zeta_m)/\mathbb Q\bigr)$. Let $s_0\in\mathbb N$ be the number of these maximal ideals that contain $1-\zeta_m^{i_0} - \zeta_m^{j_0}$. It follows that $N_{i_0,j_0}$ is divisible by $p^{\frac{s_0\phi(m)}{r}}=p^{s_0q}$. 

If $s_0q\ge 2$, then $p^2\le N_{i_0,j_0}\le 3^{\phi(m)}\le 3^{m-1}$, hence $p<\sqrt{3}^m<2^m$, a contradiction.

Thus $s_0q=1$ and hence $s_0=q=1$. Thus $m\mid p-1$ and $r=\phi(m)$, so statement (2) is true. 

By reindexing we can assume that the image of $\zeta_m$ in $\mathcal Z_m/\mathfrak p_1\cong\mathbb F_p$ is $\zeta$. 

As $p>2^m-1\ge m$, Corollary \ref{C2}(4) implies that statement (3) is true and that $i\neq j$.

If $(i,j)\in\Omega_{m,p,\zeta;1}$, hence $1-\zeta_m^i-\zeta_m^j\in\mathfrak p_1$, by taking $(i_0,j_0):=(i,j)$, for $l\in \llbracket2,m-1\rrbracket$ relatively prime to $m$, $1-\zeta_m^{li}-\zeta_m^{lj}\in\mathfrak p_k$ with $k\in \llbracket2,r\rrbracket$ and hence from $s_0=1$ it follows that $[l]_m(i,j)\notin\Omega_{m,p,\zeta;1}$. So statement (4) is true.

As $r=\phi(m)$ and $s=1$ we get that the stabilizer of $1-\zeta_m^{i_0} - \zeta_m^{j_0}$, equivalently of $\zeta_m^{i_0} +\zeta_m^{j_0}$, in $\mathcal G_m$ is the trivial subgroup. From this, if $(i,j)\in\Omega_{m,p,\zeta;1}$, by taking $(i_0,j_0):=(i,j)$ we get that statement (5) is true and that $i+j\neq m$.

Based on statements (4) and (5) we have a disjoint union 
$$\sqcup_{(i,j)\in\Omega_{m,p,\zeta;1}} \{(i_l,j_l)|l\in \llbracket2,m-1\rrbracket,\gcd(m,l)=1\}$$
of subsets of $\mathbb K_m$ and hence also of subsets of $\{(i,j)\in\mathbb I_{m-1}|ij>0,\;i\neq j,\;i+j\neq m\}$. As the last set has $(m-1)^2-2(m-1)=(m-1)(m-3)$ elements, statement (6) is true. 
\end{proof}

\begin{Co}\label{C3}
Let $m\in \mathbb N\setminus 6\mathbb N$. Let $p$ be a prime such that $p>\min\bigl(2^m-1,3^{\phi (m)}\bigr)$. Assume that one of the conditions (1) to (6) of Theorem \ref{TH2} does not hold (e.g., $m\nmid p-1$). Then there exist no $m$-th roots of unity $x$ and $y$ in $\overline{\mathbb F_p}$ such that $x+y=1$. 
\end{Co}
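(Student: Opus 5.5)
We argue by contraposition: assume there exist $m$-th roots of unity $x,y\in\overline{\mathbb F_p}$ with $x+y=1$. We will show that $p\mid\Delta_{m;1}$. Theorem \ref{TH2} then applies directly, since $6\nmid m$ and $p>\min\bigl(2^m-1,3^{\phi(m)}\bigr)$ by hypothesis. It gives that statements (1) to (6) all hold, which contradicts the assumption that one of them fails.

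The main step is to deduce $p\mid\Delta_{m;1}$ from the existence of $(x,y)$. Consider the ring homomorphism $\mathbb Z[X,Y]\rightarrow\overline{\mathbb F_p}$ sending $X\mapsto x$ and $Y\mapsto y$. Its kernel contains $X^m-1$, $Y^m-1$, $X+Y-1$ and $p$, so it contains $I_{m,1,p;1}$. Hence $R_{m,1,p;1}\neq 0$. Since $6\nmid m$, Theorem \ref{TH1}(3) (case (3.b)) says that $R_{m,1;1}$ is finite. Theorem \ref{TH1}(5) with $r=1$ then gives $p\mid\Delta_{m;1}$.

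An alternative route is through the resultant. Proposition \ref{L3}(3) gives $-\Delta_{m;1}=\Res\bigl(\Psi_m(X),\Psi_m(1-X)\bigr)$. The element $x$ is a common root in $\overline{\mathbb F_p}$ of the reductions modulo $p$ of $X^m-1$ and $(1-X)^m-1$, because $(1-x)^m=y^m=1$. Since $X^m-1$ is monic, the reduction of the resultant modulo $p$ equals the resultant of the reductions, and that resultant vanishes. Either route works.

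There is one subtlety. Applying Theorem \ref{TH1} requires $m\ge r=1$ and $\gcd(m,1)=1$, both of which are trivial. The example in the statement, $m\nmid p-1$, is simply the failure of statement (2). No genuine obstacle arises: the only point needing care is checking that the hypothesis $p>\min(2^m-1,3^{\phi(m)})$ matches the hypothesis of Theorem \ref{TH2} verbatim, and it does.
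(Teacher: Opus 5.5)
Your proof is correct and follows the paper's own argument. The evaluation map $X\mapsto x$, $Y\mapsto y$ shows $R_{m,1,p;1}\neq 0$, Theorem \ref{TH1}(3) and (5) then give $p\mid\Delta_{m;1}$, and Theorem \ref{TH2} yields the contradiction; your resultant variant is an equally valid shortcut, since both $X^m-1$ and $(1-X)^m-1$ have unit leading coefficients, so reduction modulo $p$ commutes with taking the resultant.
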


\begin{proof}
We show that the assumption that such $x$ and $y$ exist leads to a contradiction. There exists a nonzero ring homomorphism $\rho:R_{m,1,p;1}\rightarrow\overline{\mathbb F_p}$ such that $\rho(X+I_{m,r,p;1})=x$ and $\rho(Y+I_{m,r,p;1})=y$. Thus $p\mid\Delta_{m;1}$ by Theorem \ref{TH1}(3) and (5) and this contradicts Theorem \ref{TH2}.
\end{proof}

\begin{Not}\normalfont\label{N1}
Let $p$ be a prime and $a\in\mathbb Z$. 

\medskip
{\bf (1)} For $\alpha\in\overline{\mathbb F_p}\setminus\{0\}$, let $o(\alpha)\in\mathbb N$ be its order. Moreover, for $\alpha\in\overline{\mathbb F_p}\setminus\{0,a\}$ we also define $\ell_a(\alpha):=\textup{lcm}\bigl(o(\alpha),o(a-\alpha)\bigr)$. We note that $\ell_a(\alpha)=\ell_a(a-\alpha)$.  

\smallskip
{\bf (2)} If $\gamma\in\mathbb F_p\setminus\{0\}$ is the reduction of an integer $\beta$ modulo $p$, let $o_p(\beta):=o(\gamma)$.

\smallskip
{\bf (3)} We define subsets $SO_{p;a}:=\{\ell_a(\alpha)|\alpha\in\mathbb F_p\setminus\{0,a\}\}$, $O_{p;a}:=\{\ell_a(\alpha)|\alpha\in\overline{\mathbb F_p}\setminus\{0,a\}\}$, $MSO_{p;a}:=\cup_{\ell\in SO_{p;a}} \ell\mathbb N$, and $MO_{p;a}:=\cup_{\ell\in O_{p;a}} \ell\mathbb N$ of $\mathbb N$. 

\smallskip
{\bf (4)} If we have an inclusion $MO_{p;1}\subset 6\mathbb N$, then let $m_{p;1}:=0$. If the set difference $MO_{p;1}\setminus 6\mathbb N$ is non-empty, then let $m_{p;1}:=\min(MO_{p;1}\setminus 6\mathbb N)\in\mathbb N$.

\smallskip
{\bf (5)} If we have an inclusion $MO_{p;-1}\subset 3\mathbb N$, then let $m_{p;-1}:=0$. If the set difference $MO_{p;-1}\setminus 3\mathbb N$ is non-empty, then let $m_{p;-1}:=\min(MO_{p;-1}\setminus 3\mathbb N)\in\mathbb N$.

\smallskip
{\bf (6)} If we have an inclusion $MO_{p;-2}\subset 2\mathbb N$, then let $m_{p;-2}:=0$. If the set difference $MO_{p;-2}\setminus 2\mathbb N$ is non-empty, then let $m_{p;-2}:=\min(MO_{p;-2}\setminus 2\mathbb N)\in\mathbb N$.

\smallskip
{\bf (7)} If $|a|\ge 3$, let $m_{p;a}:=\min(MO_{p;a})\in\mathbb N$.
\end{Not}

From Corollary \ref{C3} we get directly the following consequence.

\begin{Co}\label{C4}
Let $p$ be a prime such that $m_{p;1}>0$. If one of the conditions (1) to (6) of Theorem \ref{TH2} does not hold for $m_{p;1}$ (e.g., $m_{p;1}\nmid p-1$), then we have an inequality $p\le\min\bigl(2^{m_{p;1}}-1,3^{\phi(m_{p;1})}\bigr)$. 
\end{Co}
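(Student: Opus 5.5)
The plan is to apply Corollary \ref{C3} with $m:=m_{p;1}$ and argue by contraposition: assume $p>\min\bigl(2^{m}-1,3^{\phi(m)}\bigr)$ and derive a contradiction. First we verify that Corollary \ref{C3} applies. We have $m\in\mathbb N$ because $m_{p;1}>0$. By Notation \ref{N1}(4), $m=\min(MO_{p;1}\setminus 6\mathbb N)$, so $6\nmid m$ and $m\in\mathbb N\setminus 6\mathbb N$. By hypothesis, one of conditions (1) to (6) of Theorem \ref{TH2} fails for $m$. Corollary \ref{C3} then says that there are no $m$-th roots of unity $x,y\in\overline{\mathbb F_p}$ with $x+y=1$.

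Next we produce such $x,y$ to reach the contradiction. Since $m\in MO_{p;1}$, there is $\ell\in O_{p;1}$ with $\ell\mid m$. Write $\ell=\ell_1(\alpha)=\textup{lcm}\bigl(o(\alpha),o(1-\alpha)\bigr)$ for some $\alpha\in\overline{\mathbb F_p}\setminus\{0,1\}$. Set $x:=\alpha$ and $y:=1-\alpha$. Both are nonzero, their orders divide $\ell$ and hence $m$, and $x+y=1$. So $x$ and $y$ are $m$-th roots of unity summing to $1$, which contradicts Corollary \ref{C3}. Therefore $p\le\min\bigl(2^{m_{p;1}}-1,3^{\phi(m_{p;1})}\bigr)$.

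There is no real obstacle here. The only points needing care are unwinding Notation \ref{N1} to see that $m_{p;1}\notin 6\mathbb N$ and that it admits an actual pair $(\alpha,1-\alpha)$ of $m_{p;1}$-th roots of unity, and noting that the hypothesis ``one of (1) to (6) fails'' is exactly the hypothesis Corollary \ref{C3} requires.
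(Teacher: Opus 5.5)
Your proposal is correct and follows exactly the paper's route: the paper states that this corollary follows directly from Corollary \ref{C3} applied to $m=m_{p;1}$. Your unwinding of Notation \ref{N1} makes explicit the step the paper leaves implicit, namely that $6\nmid m_{p;1}$ and that some $\alpha$ with $\ell_1(\alpha)\mid m_{p;1}$ yields $m_{p;1}$-th roots of unity $\alpha$ and $1-\alpha$ summing to $1$.
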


\begin{Le}\label{L5}
Let $p$ be a prime. Then the following properties hold.

\medskip
{\bf (1)} If $6\nmid p-1$, then $m_{p;1}>0$.

\smallskip
{\bf (2)} If $6\mid p-1$, then there exists $\alpha\in\mathbb F_p\setminus\{0,1\}$ such that $6\nmid o(\alpha)$ and $6\nmid o(1-\alpha)$.
\end{Le}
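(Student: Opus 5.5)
The plan is to treat part (1) by an explicit choice of $\alpha$ in each residue class of $p$, and part (2) by elimination among a few explicit candidates, backed up by a character-sum count.

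\textbf{Part (1).} By Notation \ref{N1}(4), it suffices to exhibit one $\alpha\in\overline{\mathbb F_p}\setminus\{0,1\}$ with $6\nmid\ell_1(\alpha)$. The hypothesis $6\nmid p-1$ leaves three cases.
\begin{itemize}
\item If $p=2$, let $\alpha\in\mathbb F_4$ be a root of $X^2+X+1$. Then $1-\alpha=1+\alpha=\alpha^2$, so $\ell_1(\alpha)=3$.
\item If $p=3$, take $\alpha=2=-1$. Then $1-\alpha=2$ as well, so $\ell_1(\alpha)=2$.
\item If $p\ge 5$, then $p\equiv 5\pmod 6$, so $3\nmid p-1$. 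Take $\alpha=2\in\mathbb F_p\setminus\{0,1\}$, so that $1-\alpha=-1$. Both orders divide $p-1$, hence $\ell_1(\alpha)\mid p-1$ and $3\nmid\ell_1(\alpha)$.
\end{itemize}
In every case $\ell_1(\alpha)\in MO_{p;1}\setminus 6\mathbb N$, so $m_{p;1}>0$.

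\textbf{Part (2): explicit candidates.} Now $6\mid p-1$. First try the pairs $(\alpha,1-\alpha)=(-1,2)$ and $(2,-1)$. Since $o(-1)=2$, both choices work unless $6\mid o_p(2)$. So reduce to the case $6\mid o_p(2)$. Next test further explicit candidates:
\begin{itemize}
\item $\alpha=\tfrac12$, with $1-\alpha=\tfrac12$;
\item $\alpha=4$, with $1-\alpha=-3$, and $\alpha=-3$, with $1-\alpha=4$;
\item $\alpha=\omega$ a primitive cube root of unity in $\mathbb F_p$, using $(1-\omega)(1-\omega^2)=3$ and $(1-\omega)^2=-3\omega$.
\end{itemize}
These relate $o(1-\omega)$ to $o_p(3)$ and $o_p(-3)$. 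The hope is that the simultaneous failure of all candidates imposes incompatible conditions on the $2$- and $3$-parts of $o_p(2)$ and $o_p(3)$. I am not confident this elimination closes by itself.

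\textbf{Part (2): counting fallback.} Write $p-1=2^a3^bc$ with $\gcd(c,6)=1$. Let $G_2\le\mathbb F_p^\times$ be the subgroup of elements of odd order, of index $2^a$, and $G_3$ the subgroup of elements of order prime to $3$, of index $3^b$. Every element of $G_2\cup G_3$ has order not divisible by $6$. It therefore suffices to find $\alpha$ with $\alpha\in G_i$ and $1-\alpha\in G_j$ for some $i,j\in\{2,3\}$.

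Count such $\alpha$ by expanding the indicator functions of cosets as sums of multiplicative characters. The main term is $p/(\textup{index}_i\cdot\textup{index}_j)$. The error is bounded by Jacobi sums of absolute value $\sqrt p$, plus $O(1)$ boundary terms. This gives existence whenever $p$ is large relative to the indices. Concretely, choose $i=j$ to be the index with $\min(2^a,3^b)$, or use mixed pairs.

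\textbf{Main obstacle.} The hardest step is making the counting uniform. When both $2^a$ and $3^b$ are large compared with $p^{1/4}$, the Jacobi-sum error can swamp the main term. For that regime I would fall back on the explicit candidates above. The large $2$- and $3$-adic valuations of $p-1$ should then force $o_p(2)$, $o_p(3)$ and $o_p(-3)$ into a situation where one of $\alpha\in\{-1,2,4,-3,\omega\}$ works. Checking this interplay carefully is where I expect the real work to lie.
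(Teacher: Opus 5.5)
Your part (1) is correct and is essentially the paper's argument. The paper takes $\alpha=\tfrac12$, so that $\ell_1(\tfrac12)=o_p(2)$, where you take $\alpha=2$; either works.

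\textbf{Part (2) has a genuine gap, exactly where you suspect.} Write $p-1=2^a3^bc$ with $\gcd(c,6)=1$. Your character-sum count compares a main term of size $p/(ef)$ with an error of size roughly $\sqrt p$. So it proves existence only when $\min(2^a,3^b)$ is below about $p^{1/4}$. When $c=1$ and $2^a\approx 3^b$, even the heuristic expected number of admissible $\alpha$, about $|A|^2/p\approx p(2^{-a}+3^{-b})^2$, is only around $4$. A main-term-versus-Weil-error argument cannot work in that regime.

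The fallback to a fixed list of explicit candidates does not close the gap either. Take $p=433=2^4\cdot 3^3+1$, where $16,27>433^{1/4}$.
\begin{itemize}
\item One computes $o(2)=72$, $o(4)=36$ and $o(-3)=54$.
\item Put $\omega=234=3^{18}$ and note $o(3)=27$. Then $(1-\omega)^2=-3\omega=-3^{19}$ and $(1-\omega^2)^2=-3\omega^2=-3^{10}$ both have order $54$. Hence $6$ divides $o(1-\omega)$ and $o(1-\omega^2)$.
\item So every $\alpha\in\{-1,2,\tfrac12,4,-3,\omega,\omega^2\}$ fails.
\end{itemize}
Yet $\alpha=17$ works, since $17=3^{13}$ and $1-17=-16=3^{24}$ both lie in the subgroup of order $27$. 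The balanced regime therefore needs a genuinely new idea, and your proposal does not contain one.

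\textbf{The paper's proof does not fill this gap.} The paper claims that fewer than $\frac{p-1}{2}$ elements of $\mathbb F_p^\times$ have order divisible by $6$. It derives this from $\phi(6i)<3\phi(i)$ for $i\mid\frac{p-1}{6}$, and then pigeonholes $A$ against $1-A$. But that count is exactly $(2^a-1)(3^b-1)c$, and $\phi(6i)\ge 4\phi(i)$ for every even $i$. For $p=13$ the count is $6=\frac{p-1}{2}$, and for $p=97$ it is $62>48$, so the pigeonhole fails.

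The paper's argument is thus valid only when $4\nmid p-1$. That case is already covered by your count with the quadratic character: it gives $\frac{p-3}{4}>0$ values of $\alpha$ with $\alpha$ and $1-\alpha$ both squares, hence both of odd order. For primes $p\equiv 1\pmod{12}$ with both $2^a$ and $3^b$ larger than about $p^{1/4}$, neither your proposal nor the paper establishes (2).
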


\begin{proof}
If $p=2$, then for $\alpha\in\mathbb F_4\setminus\mathbb F_2$ we have $o(\alpha)=o(1-\alpha)=3$, hence $3\in MO_{2;1}$. If $p=3$, then $o_3(2)=2\in MO_{3;1}$. If $6\mid p-5$, then $o_p(2)\in MO_{p;1}\setminus 6\mathbb N$ is a divisor of $p-1$. So part (1) holds.

For part (2), let $s\in\mathbb N$ be such that $p-1=6s$. The number of elements in $\mathbb F_p\setminus\{0\}$ that have order divisible by $6$ is $$C:=\sum_{i\in\Div(s)} \phi(6i)<\frac{3}{2}\sum_{i\in\Div(s)}\phi(6)\phi(i)=3\sum_{i\in\Div(s)}\phi(i)=3s.$$ 
As $C<3s=\frac{p-1}{2}$, $2C+1\le p-2$ and thus there exists $\alpha\in\mathbb F_p\setminus\{0,1\}$ such that $6\nmid o(\alpha)$ and $6\nmid o(1-\alpha)$. So part (2) holds.
\end{proof}

\begin{Rem}\normalfont\label{R2}
{\bf (1)} Let $p$ be a prime, $m\in\mathbb N$, and $a\in\mathbb Z$. There exists a nonzero (equivalently, surjective) ring homomorphism $R_{m,1,p;a}\rightarrow\mathbb F_p$ iff $m\in MSO_{p;a}$. Similarly, there exists a nonzero ring homomorphism $R_{m,1,p;a}\rightarrow F$ with $F$ a finite field of characteristic $p$ iff $m\in MO_{p;a}$. 

\smallskip
{\bf (2)} From part (1), Theorem \ref{TH1}(4), and Remark \ref{R1}(3) we get the following inclusions and identities $6\mathbb N\subset MO_{p;1}$, $3\mathbb N\subset MO_{p;-1}$, $2\mathbb N\subset MO_{p;0}\cap MO_{p;-2}$, and $\mathbb N=MO_{p;2}$ for each prime $p$. Moreover, $\mathbb N=MO_{2;a}$ if $a\in 2\mathbb Z$.

\smallskip
{\bf (3)} Let $p$ be an odd prime. Then $\ell_1(\frac{1}{2})=o_p(2)\in SO_{p;1}$. If $6\nmid o_p(2)$ (e.g., this holds if $p\not\equiv 1 \pmod{6}$), then $m_{p;1}\le o_p(2)$ with $p\mid 2^{o_p(2)}-1$. In general $m_{p;1}<o_p(2)$ and hence $p\nmid 2^{m_{p;1}}-1$. For instance, if $p=11$, then $o_{11}(2)=10$ but $m_{11;1}\le 5$ as $SO_{11;1}=\{5,10\}$; we have $5\in SO_{11;1}$ as $o_{11}(3)=o_{11}(9)=5$.

\smallskip
{\bf (4)} We have $\min(MO_{2;1})=3$.

\smallskip
{\bf (5)} We have $t_{m,p;a}\in\mathbb N$ iff $\theta_{m,p,1;a}(X)$ has a root in $\overline{\mathbb F_p}$ and iff $m\in MO_{p;a}$. Similarly, $\theta_{m,p,1;a}(X)$ has a root in $\mathbb F_p$ iff $m\in MSO_{p;a}$.

\smallskip
{\bf (6)} We have $SO_{13;1}\subset 6\mathbb N$ with $o_{13}(5)=4$ and $o_{13}(9)=3$ not divisible by $6$.
\end{Rem}

\section{Frobenius Identity}\label{S4}

For elements $x, y$ in a unital commutative ring $R$ and $n\in \mathbb{N}$, we write $x\equiv y \pmod{n}$ if $x-y\in nR$. The following well-known identity is used repeatedly throughout this section.

\begin{Le}[Frobenius Identity]\label{L1}
	Let $p$ be a prime number. Then, for each $x, y\in R$, we have the congruence
	\[
	(x + y)^p \equiv x^p + y^p \pmod{p}.
	\]
\end{Le}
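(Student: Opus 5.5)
We expand $(x+y)^p$ by the binomial theorem, which holds in any unital commutative ring $R$:
\[
(x+y)^p=\sum_{k=0}^{p}\binom{p}{k}x^ky^{p-k}=x^p+y^p+\sum_{k=1}^{p-1}\binom{p}{k}x^ky^{p-k}.
\]
It is then enough to show that each middle term lies in $pR$. This in turn reduces to the statement that $p$ divides $\binom{p}{k}$ in $\mathbb Z$ for every $k\in\llbracket1,p-1\rrbracket$. Indeed, if $\binom{p}{k}=pc_k$ with $c_k\in\mathbb Z$, then the image of $\binom{p}{k}$ in $R$ is $p$ times the image of $c_k$. Hence $\binom{p}{k}x^ky^{p-k}=p\cdot c_kx^ky^{p-k}\in pR$, using the convention above that integers act on $R$ through the unique unital ring homomorphism $\mathbb Z\rightarrow R$.

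For the divisibility, we use the integer identity $k\binom{p}{k}=p\binom{p-1}{k-1}$. This shows $p\mid k\binom{p}{k}$. Since $1\le k\le p-1$, the prime $p$ does not divide $k$, so by Euclid's lemma $p\mid\binom{p}{k}$. Equivalently, $p$ divides the numerator $p!$ of $\binom{p}{k}=\frac{p!}{k!(p-k)!}$ but divides neither $k!$ nor $(p-k)!$, since all their factors are smaller than $p$.

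There is no real obstacle here. The only points needing care are that the binomial theorem requires commutativity, which is assumed, and the formal step of transporting integer divisibility into the congruence $x\equiv y\pmod p$ in an arbitrary $R$, which is exactly the convention the paper fixed before the lemma. Summing the middle terms, which all lie in the ideal $pR$, gives $(x+y)^p-x^p-y^p\in pR$, which is the claim.
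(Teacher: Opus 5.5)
Your proof is correct and follows the same route as the paper: expand $(x+y)^p$ by the binomial theorem and use $p\mid\binom{p}{k}$ for $k\in\llbracket1,p-1\rrbracket$, so that every middle term lies in $pR$. The only difference is that you also justify this divisibility via $k\binom{p}{k}=p\binom{p-1}{k-1}$ and Euclid's lemma, whereas the paper takes it as known.
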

\begin{proof}
	This follows from the binomial expansion and the fact that for each $k\in \llbracket1,p-1\rrbracket$ the binomial coefficient is divisible by $p$ and hence we have $\binom{p}{k}x^ky^{p-k}\in pR$.
\end{proof}

We aim to prove the following generalized form of Frobenius Identity. 

\begin{theorem}\label{TH0}
	If $N=p_1^{e_1}\cdots p_l^{e_l}$, where $l\in\mathbb N$, $p_1, \ldots, p_l$ are distinct primes, and $e_1, \ldots e_l$ are natural numbers, then for each $x, y\in R$ 
	the following congruence holds
	\begin{align}\label{formula}
		(x+y)^N \equiv 
	\end{align}
	\begin{align*}
		\overset{l}{\underset{s=1}{\sum}}
		\underset{\{j_1, \ldots , j_{l-s}\}=\llbracket1, l\rrbracket \setminus \{i_1, \ldots , i_s\} }{\underset{\{i_1, \ldots , i_s\}\subseteq \llbracket1, l\rrbracket}
			{\sum}}(-1)^{s+1}(x^{p_{i_1}^{e_{i_1}}\cdots p_{i_s}^{e_{i_s}}}+
		y^{p_{i_1}^{e_{i_1}}\cdots p_{i_s}^{e_{i_s}}})^{p_{j_1}^{e_{j_1}}\cdots p_{j_s}^{e_{j_{l-s}}}} \: (\text{mod}\: p_1\cdots p_l).
	\end{align*}
\end{theorem}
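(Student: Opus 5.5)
The plan is to reduce the congruence modulo $p_1\cdots p_l$ to congruences modulo each single prime $p_k$, and then show that, modulo $p_k$, all terms on the right-hand side except one cancel in pairs.

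\textbf{Step 1 (reduction to one prime).} The ideals $p_1R,\ldots,p_lR$ are pairwise comaximal, since $\gcd(p_i,p_j)=1$ gives $1\in p_iR+p_jR$. By the Chinese Remainder Theorem, $p_1\cdots p_lR=\bigcap_k p_kR$. Hence it suffices to fix $k\in\llbracket1,l\rrbracket$ and prove the congruence modulo $p_k$.

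\textbf{Step 2 (iterated Frobenius).} From Lemma \ref{L1}, induction on $e$ gives $(u+v)^{p^e}\equiv u^{p^e}+v^{p^e}\pmod p$ for all $u,v\in R$. The inductive step uses that if $a\equiv b\pmod p$ then $a^p\equiv b^p\pmod p$. Also, congruences modulo $p$ are preserved under raising both sides to any natural power.

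\textbf{Step 3 (the key cancellation).} Put $q:=p_k^{e_k}$ and $N=qM$. For a nonempty $S\subseteq\llbracket1,l\rrbracket$ write $P_S:=\prod_{i\in S}p_i^{e_i}$, so the $S$-term of the right-hand side is $(-1)^{|S|+1}(x^{P_S}+y^{P_S})^{N/P_S}$.

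If $k\notin S$, then $q\mid N/P_S$. By Step 2,
\[
(x^{P_S}+y^{P_S})^{N/P_S}=\bigl((x^{P_S}+y^{P_S})^{q}\bigr)^{N/(qP_S)}\equiv (x^{qP_S}+y^{qP_S})^{N/(qP_S)}\pmod{p_k}.
\]
The right-hand side here is exactly the unsigned term for $S\cup\{k\}$, since $P_{S\cup\{k\}}=qP_S$.

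The signs of the $S$-term and the $(S\cup\{k\})$-term are opposite, so the pairing $S\leftrightarrow S\cup\{k\}$ (over nonempty $S\not\ni k$) cancels modulo $p_k$. The only unpaired subset is $S=\{k\}$, contributing $+(x^q+y^q)^M$. On the other side, Step 2 gives $(x+y)^N=\bigl((x+y)^q\bigr)^M\equiv(x^q+y^q)^M\pmod{p_k}$, which proves the congruence modulo $p_k$.

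The main obstacle is only bookkeeping: the statement's indexing uses subsets $\{i_1,\ldots,i_s\}$ with complementary $\{j_1,\ldots,j_{l-s}\}$, and the exponent $p_{j_1}^{e_{j_1}}\cdots p_{j_{l-s}}^{e_{j_{l-s}}}$ must be read as $N/P_S$. One should also note that the $s=l$ term is simply $x^N+y^N$.
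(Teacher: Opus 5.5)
Your proof is correct and follows essentially the same route as the paper: you reduce via the Chinese Remainder Theorem to a single prime $p_k$, match $(x+y)^N$ with the $\{k\}$-term by iterated Frobenius, and cancel all remaining terms modulo $p_k$ by pairing each nonempty $S\not\ni k$ with $S\cup\{k\}$. The paper does the same for $p_1$ through its reindexing by $t$ and then invokes symmetry; your explicit pairing is just a cleaner presentation of that bookkeeping.
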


Although we apply this congruence only in the case when $l=2$ and $e_1=e_2=1$, we include and prove it in its full generality with the hope that it may be useful in other contexts. As such, and to make the proof of Congruence (\ref{formula}) easier to follow, we first consider this case.

\begin{Pro}\label{P1}
	Let $p$ and $q$ be distinct prime numbers. If $N=pq$, then we have the congruence
	\[
	(x + y)^N \equiv (x^p + y^p)^q + (x^q + y^q)^p - x^N - y^N \pmod{N}.
	\] 
\end{Pro}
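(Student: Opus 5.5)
Since $\gcd(p,q)=1$, the Chinese Remainder Theorem shows that an element of $R$ lies in $NR=pqR$ iff it lies in both $pR$ and $qR$. Indeed, pick integers $u,v$ with $up+vq=1$. If $z=pa=qb$, then $z=upz+vqz=pq(ub+va)$. So I will prove the congruence separately modulo $p$ and modulo $q$. By the symmetry of the right-hand side in $(p,q)$, it suffices to treat the modulus $p$.

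Modulo $p$, I iterate Lemma \ref{L1}. Applying it $q$ times in the form $(x+y)^{p}\equiv x^p+y^p$ does not directly give $(x+y)^{pq}$. Instead I write $(x+y)^{pq}=\bigl((x+y)^p\bigr)^q\equiv (x^p+y^p)^q \pmod p$. This uses that $a\equiv b \pmod p$ implies $a^q\equiv b^q\pmod p$, because $a^q-b^q=(a-b)(a^{q-1}+\cdots+b^{q-1})$. So the left-hand side is congruent to the first term $(x^p+y^p)^q$ of the right-hand side.

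It remains to show that $(x^q+y^q)^p-x^{pq}-y^{pq}\equiv 0\pmod p$. This is again Lemma \ref{L1}, applied to the elements $x^q$ and $y^q$: $(x^q+y^q)^p\equiv x^{pq}+y^{pq}\pmod p$. Combining the two steps gives $(x+y)^N\equiv (x^p+y^p)^q+(x^q+y^q)^p-x^N-y^N\pmod p$. Swapping the roles of $p$ and $q$ gives the same congruence modulo $q$, and the CRT step above finishes the proof.

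There is no real obstacle. The only point requiring care is the gluing step from $pR\cap qR$ to $pqR$ in an arbitrary commutative ring $R$. There we must use the Bézout identity explicitly rather than appeal to unique factorization.
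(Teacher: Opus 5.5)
Your proof is correct and follows the paper's argument. You reduce modulo $p$ using Lemma \ref{L1} twice: once for $(x+y)^{pq}\equiv (x^p+y^p)^q$ and once for $(x^q+y^q)^p\equiv x^{pq}+y^{pq}$. You get the congruence modulo $q$ by symmetry and glue the two with the Chinese Remainder Theorem for $pR$ and $qR$, which you usefully make explicit via a Bézout identity.
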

\begin{proof} As $p$ is a prime, it follows from Lemma \ref{L1} that
	\[
	(x + y)^{N} \equiv (x^p + y^p)^q \pmod{p}
	\]
	and 
	\[
	(x^q + y^q)^p - x^N - y^N \equiv 0 \pmod{p}.
	\]
	Therefore,
	\[
	(x + y)^N \equiv (x^p + y^p)^q + (x^q + y^q)^p - x^N - y^N \pmod{p}.
	\]
	
	\noindent The same argument applies modulo $q$. Therefore, the two sides of the identity are congruent modulo both $p$ and $q$, and, as $N = pq$, the result follows by the Chinese Remainder Theorem applied to the coprime principal ideals $pR$ and $qR$ of $R$.\end{proof}

We now prove the general case of Congruence \eqref{formula}.

\medskip
{\it Proof of Theorem \ref{TH0}.} We begin by showing Congruence \eqref{formula} for $p_1$ in place of $p_1\cdots p_l$.

By Lemma \ref{L1} applied $e_1$ times, we have the identity $(x+y)^{p_1^{e_1}} \equiv x^{p_1^{e_1}} + y^{p_1^{e_1}} \pmod{p_1}.$
Thus, 
\[
(x+y)^N \equiv (x^{p_1^{e_1}} + y^{p_1^{e_1}})^{p_2^{e_2} \cdots p_l^{e_l}} \pmod{p_1}.
\]

Thus we need to prove the congruence
\begin{equation}\label{EQ9.5}
\overset{l}{\underset{s=1}{\sum}}\sum_{\substack{
		\{1\} \ne \{i_1, \dots, i_s\} \subseteq \llbracket1, l\rrbracket \\
		\{j_1, \dots, j_{l-s}\} = \llbracket1, l\rrbracket \setminus \{i_1, \dots, i_s\}
}} 
(-1)^{s+1}
\left(
x^{p_{i_1}^{e_{i_1}} \cdots p_{i_s}^{e_{i_s}}} +
y^{p_{i_1}^{e_{i_1}} \cdots p_{i_s}^{e_{i_s}}}
\right)^{p_{j_1}^{e_{j_1}} \cdots p_{j_{l-s}}^{e_{j_{l-s}}}}
\equiv 0 \pmod{p_1}.
\end{equation}

\noindent Using a substitution $s=t-1$ while denoting the elements $i_2$ to $i_t$ if $1\notin\{i_1,\ldots,i_s\}$ and a substitution $s=t$ while denoting the elements $i_1$ to $i_t$  if $1\in\{i_1,\ldots,i_s\}$, the left hand side of Congruence (\ref{EQ9.5}) is congruent to
$$
	\overset{l}{\underset{t=2}{\sum}}\sum_{\substack{
			\{i_2, \dots, i_t\} \subseteq \llbracket2, l\rrbracket \\
			\{j_2, \dots, j_{l-t+1}\} = \llbracket2, l\rrbracket \setminus \{i_2, \dots, i_t\}
	}} 
	(-1)^{t}
	\left( 
	x^{p_{i_2}^{e_{i_2}} \cdots p_{i_t}^{e_{i_t}}} + 
	y^{p_{i_2}^{e_{i_2}} \cdots p_{i_t}^{e_{i_t}}}
	\right)^{p_1^{e_1} p_{j_2}^{e_{j_2}} \cdots p_{j_{l-t+1}}^{e_{j_{l-t+1}}}}$$
$$+\overset{l}{\underset{t=2}{\sum}}\sum_{\substack{
			\{i_2, \dots, i_t\} \subseteq \mathbb \llbracket2, l\rrbracket\\
			\{j_2, \dots, j_{l-t+1}\} = \mathbb \llbracket2, l\rrbracket \setminus \{i_2, \dots, i_t\}
	}} 
	(-1)^{t+1}
	\left( 
	x^{p_1^{e_1} p_{i_2}^{e_{i_2}} \cdots p_{i_t}^{e_{i_t}}} + 
	y^{p_1^{e_1} p_{i_2}^{e_{i_2}} \cdots p_{i_t}^{e_{i_t}}}
	\right)^{p_{j_2}^{e_{j_2}} \cdots p_{j_{l-t+1}}^{e_{j_{l-t+1}}}} \equiv 0 \pmod{p_1}.$$

\noindent Here we have used the fact that, in characteristic \( p_1 \), Lemma \ref{L1} applied $e_1$ times gives
\[
\left(
x^{p_{i_2}^{e_{i_2}} \cdots p_{i_t}^{e_{i_t}}} + 
y^{p_{i_2}^{e_{i_2}} \cdots p_{i_t}^{e_{i_t}}}
\right)^{p_1^{e_1}} \equiv
x^{p_1^{e_1} p_{i_2}^{e_{i_2}} \cdots p_{i_t}^{e_{i_t}}} + 
y^{p_1^{e_1} p_{i_2}^{e_{i_2}} \cdots p_{i_t}^{e_{i_t}}}
\pmod{p_1}.
\]

By symmetry, the same holds for all other primes $ p_i $, and thus the identity holds modulo $p_1 \cdots p_l$ by the Chinese Remainder Theorem applied to the pairwise coprime principal ideals $p_1R,\ldots, p_lR$ of $R$. Thus Theorem \ref{TH0} holds.

\smallskip
Let $(a, b)\in \mathbb{Z}^2$. Next we show that if $(X+Y+b)^N-(b+a)^N\not \in I_{m, r, N; a}$, 
then for a specific $r$ the coset $(X+Y+b)^N-(b+a)^N + I_{m, r, N; a}$ contains a polynomial whose coefficients are divisible by a proper divisor of $N$.

\begin{Pro}\label{P2}
	Let $p$ be a prime divisor of $N$. Let $m\in\mathbb N\cap\llbracket2,p-1\rrbracket$. Let $(n,r)\in\mathbb N
\times\llbracket1,m-1\rrbracket$ be such that $p = m n + r$. 
	Let $(a,b)\in \mathbb{Z}^2$. Then the following properties hold.
	
	\medskip
	{\bf (1)} There exist unique integers $d_{i,j}\in p \llbracket0,\frac{N-p}{p}\rrbracket$ indexed by $(i,j)\in\mathbb I_{m,r}$ such that in $\mathbb Z[X,Y]$ we have the congruence
	\begin{equation}\label{EQ10}
		\left( X + Y +b \right)^N - \bigl(X^{p} + Y^{p}+(b+a)^p-a\bigr)^{\frac{N}{p}}\equiv \sum_{(i,j)\in\mathbb I_{m,r}} d_{i,j} X^i Y^j \pmod{J_{m,r,N; a}}.
	\end{equation}
	
	\smallskip
	{\bf (2)} In $\mathbb Z[X,Y]$ we also have the congruence
	\begin{equation}\label{EQ11}
		(X + Y+b)^N -(b+a)^N \equiv \sum_{(i,j)\in\mathbb I_{m,r}} d_{i,j} X^iY^j \pmod {I_{m,r,N; a}}.
	\end{equation}
	
	\smallskip
	{\bf (3)} 
	If $\left( X + Y +b \right)^N-(b+a)^N \notin I_{m,r,N; a}$, then there exists a pair $(i_0,j_0)\in \mathbb I_{m,r}$ such that $d_{i_0,j_0}\neq 0$ (so $\gcd(d_{i_0,j_0},N)$ 
	is a proper divisor of $N$).
\end{Pro}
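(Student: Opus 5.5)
The plan is to show that the polynomial on the left of Congruence (\ref{EQ10}) already lies in $p\mathbb Z[X,Y]$, without using $J_{m,r,N;a}$ at all. Then Lemma \ref{L2}(3) and (5) give part (1) directly. Part (2) follows by reducing the second term modulo the larger ideal $I_{m,r,N;a}$, and part (3) is immediate from part (2).

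For part (1), set $f:=(X+Y+b)^N-\bigl(X^p+Y^p+(b+a)^p-a\bigr)^{\frac{N}{p}}\in\mathbb Z[X,Y]$.
\begin{itemize}
\item By Lemma \ref{L1} applied twice in $R=\mathbb Z[X,Y]$, we have $(X+Y+b)^p\equiv X^p+Y^p+b^p \pmod{p}$.
\item By Lemma \ref{L1} and Fermat's little theorem, $(b+a)^p\equiv b^p+a^p\equiv b^p+a\pmod{p}$. Hence $X^p+Y^p+(b+a)^p-a\equiv (X+Y+b)^p\pmod{p}$.
\item Since $p\mid N$, raising to the power $\frac{N}{p}$ preserves congruences modulo $p$, so $f\in p\mathbb Z[X,Y]$.
\end{itemize}
Now apply Lemma \ref{L2}(2) and (3) with $g=X^m-1$ and $h=X^r+Y^r-a$, so that $(g,h)=J_{m,r;a}$. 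This gives $f\equiv\sum_{(i,j)\in\mathbb I_{m,r}}a_{i,j}(f)X^iY^j \pmod{J_{m,r;a}}$ with every $a_{i,j}(f)\in p\mathbb Z$. Let $d_{i,j}$ be the residue of $a_{i,j}(f)$ in $\llbracket0,N-1\rrbracket$, which is the $b_{i,j}(f)$ of Lemma \ref{L2}(5). Since $p\mid N$ and $p\mid a_{i,j}(f)$, we get $p\mid d_{i,j}$, so $d_{i,j}\in p\llbracket0,\frac{N-p}{p}\rrbracket$. Uniqueness follows from the freeness of $S_{m,r,N;a}$ over $\mathbb Z/N\mathbb Z$ with basis $\{X^iY^j\}$ (Lemma \ref{L2}(4),(5)).

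For part (2), note that $J_{m,r,N;a}\subset I_{m,r,N;a}$, so Congruence (\ref{EQ10}) also holds modulo $I_{m,r,N;a}$. Because $p=mn+r$ and $X^m-1,Y^m-1\in I_{m,r,N;a}$, we have $X^p\equiv X^r$ and $Y^p\equiv Y^r$. Hence $X^p+Y^p\equiv X^r+Y^r\equiv a \pmod{I_{m,r,N;a}}$. It follows that the subtracted term is congruent to $\bigl((b+a)^p\bigr)^{\frac{N}{p}}=(b+a)^N$, which yields Congruence (\ref{EQ11}).

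For part (3), suppose all $d_{i,j}$ were zero. Then (\ref{EQ11}) would put $(X+Y+b)^N-(b+a)^N$ in $I_{m,r,N;a}$, contrary to the hypothesis, so some $d_{i_0,j_0}\neq0$. This coefficient satisfies $p\mid d_{i_0,j_0}$ and $0<d_{i_0,j_0}\le N-p<N$. Therefore $\gcd(d_{i_0,j_0},N)$ is divisible by $p>1$ and is strictly less than $N$, so it is a proper divisor.

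I expect no real obstacle. The only points needing care are, first, the observation that $(b+a)^p-a\equiv b^p\pmod p$, which is what makes $f$ divisible by $p$ outright rather than merely modulo $J_{m,r;a}$. Second, the bookkeeping passing from integer coefficients $a_{i,j}(f)$ to their residues modulo $N$ while keeping divisibility by $p$.
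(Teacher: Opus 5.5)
Your proposal is correct and follows the paper's proof essentially step for step. You show that the left side of (\ref{EQ10}) lies in $p\mathbb Z[X,Y]$ via Lemma \ref{L1} and $b^p\equiv (b+a)^p-a \pmod p$, apply Lemma \ref{L2}(3) and (5), and then reduce $X^p+Y^p$ to $X^r+Y^r\equiv a$ modulo $I_{m,r,N;a}$. Your check that $\gcd(d_{i_0,j_0},N)$ is proper, using $p\mid d_{i_0,j_0}$ and $0<d_{i_0,j_0}<N$, makes explicit a point the paper leaves implicit.
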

\begin{proof}
	By Lemma \ref{L1}, we observe that
	\[
	\left(X + Y+b \right)^N = \left(X + Y+b\right)^{p (\frac{N}{p})} \equiv \left(X^{p} + Y^{p}+b^p\right)^{\frac{N}{p}} \pmod{p}.
	\]
	Moreover, $b^p \equiv (b+a)^p-a^p \equiv (b+a)^p-a \pmod{p}$. Thus
	\[
	(X + Y+b)^N - (X^{p} + Y^{p}+(b+a)^p-a)^{\frac{N}{p}}\in p\mathbb Z[X,Y].
	\]
	By Lemma \ref{L2}(3) and (5) applied to $(f,l)=\bigl((X + Y+b)^N - [X^{p} + Y^{p}+(b+a)^p-a]^{\frac{N}{p}},p\bigr)$ it follows that there exist unique integers 
	$d_{i,j}\in p \llbracket0,\frac{N-p}{p}\rrbracket$ indexed by $(i,j)\in\mathbb I_{m,r}$ such that Congruence (\ref{EQ10}) holds. So part (1) holds.

	As $p = mn + r$, we can write:
	\[
	X^{p} + Y^{p}+(b+a)^p-a=X^{mn + r} + Y^{mn + r}+(b+a)^p-a \equiv X^{r} + Y^{r}+(b+a)^p-a \equiv (b+a)^p \pmod{I_{m, r; a}}.
	\]
	Substituting this in the analog of Congruence (\ref{EQ10}) for $I_{m, r,N; a}$ instead of $J_{m, r,N; a}$ (recall that we have an inclusion $J_{m, r,N; a}\subseteq I_{m, r,N; a}$), 
	it follows that Congruence (\ref{EQ11}) holds. So part (2) holds.
	
	If $\left( X + Y +b \right)^N-(b+a)^N \notin I_{m,r,N; a}$, then there exists $(i_0,j_0)\in \mathbb I_{m,r}$ such that $d_{i_0,j_0}\neq 0$ by part (2). So part (3) holds.
\end{proof} 

\begin{Rem}\normalfont\label{R3}
	Although the nonvanishing of the polynomial $(X + Y+b)^N -(b+a)^N$ modulo $I_{m, r, N; a}$ ensures within the coset $(X + Y+b)^N -(b+a)^N+I_{m, r, N; a}$ the existence of a polynomial
	whose at least one coefficient is a proper divisor of $N$, for some $r<m$, we currently lack a polynomial-time algorithm to identify such a polynomial, i.e., a pair $(a,b)$ for which $m$ and $r$ would exist. 
	Nevertheless, this observation led to the idea that if $(X + Y -a)^N \not\equiv 0 \pmod{I_{m, r, N; a}}$, then we can find a proper divisor of $N$
	in polynomial time through the use of single-variable polynomials rather than bi-variate ones. 
	This possibility is explored in the next section.
\end{Rem}

We derive the following result in the special case of square-free semiprimes.

\begin{Pro}\label{P3}
	Let $N = pq$, where $p$ and $q$ are distinct primes with $p\ge 3$. Let $(m, a)\in\mathbb N\times \mathbb{Z}$ be such that $1<m<p$. Let $r\in \llbracket1, m-1\rrbracket$ be the residue of $p$ modulo $m$.
	Then the following properties hold.
	
	\medskip
	{\bf (1)} There exist unique integers $c_{i,j}\in q\mathbb Z$ indexed by $(i,j)\in\mathbb I_{m,r}$ such that in $\mathbb Z[X,Y]$ we have the congruence 
	\[
	(X+Y)^N-(X^q + Y^q)^p \equiv \sum_{(i,j)\in\mathbb I_{m,r}} c_{i,j}X^iY^j \pmod {J_{m,r; a}}.
	\]
	
	\smallskip
	{\bf (2)} If $\left( X + Y \right)^N - a^q\in I_{m,r,N; a}$, then in $\mathbb Z[X,Y]$ we have the congruence 
	\[
	(X + Y)^N - X^N - Y^N \equiv \sum_{(i,j)\in\mathbb I_{m,r}} c_{i,j} X^i Y^j \pmod{I_{m,r,N; a}}.
	\]
	
	\smallskip
	{\bf (3)} If $\left( X + Y \right)^N - a^q\in I_{m,r,N; a}$ and $(X + Y)^N - X^N - Y^N \notin I_{m, r,N; a}$, then there exists a pair $(i_0,j_0)\in \mathbb I_{m,r}$ such that $\gcd(c_{i_0,j_0}, N)=q$.
\end{Pro}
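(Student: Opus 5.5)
The argument runs parallel to the proof of Proposition \ref{P2}, with the roles of $p$ and $q$ exchanged in the Frobenius step. For part (1), Lemma \ref{L1} applied with the prime $q$ gives $(X+Y)^q\equiv X^q+Y^q \pmod{q}$. Raising to the $p$-th power shows that
$$f:=(X+Y)^N-(X^q+Y^q)^p\in q\mathbb Z[X,Y].$$
Since $X^r+Y^r-a$ is monic of degree $r$ in $Y$ and $X^m-1$ is monic of degree $m$, Lemma \ref{L2}(1)--(3) applies with $g=X^m-1$, $h=X^r+Y^r-a$ and $(f,l)=(f,q)$. It yields unique integers $c_{i,j}:=a_{i,j}(f)\in q\mathbb Z$, indexed by $(i,j)\in\mathbb I_{m,r}$, with the stated congruence modulo $J_{m,r;a}$. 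Uniqueness comes from Lemma \ref{L2}(2).

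For part (2), first pass from $J_{m,r;a}$ to the larger ideal $I_{m,r,N;a}$, which contains it. Next rewrite $(X^q+Y^q)^p$ modulo $I_{m,r,N;a}$ using Proposition \ref{P1}:
$$(X^q+Y^q)^p\equiv (X+Y)^N-(X^p+Y^p)^q+X^N+Y^N \pmod{N}.$$
Now reduce $(X^p+Y^p)^q$. Since $p=mn+r$ and $X^m-1,Y^m-1\in I_{m,r;a}$, we get $X^p+Y^p\equiv X^r+Y^r\equiv a$, so $(X^p+Y^p)^q\equiv a^q$ modulo $I_{m,r,N;a}$. Substituting into part (1) gives
$$(X+Y)^N-\bigl[(X+Y)^N-a^q+X^N+Y^N\bigr]\equiv \sum c_{i,j}X^iY^j \pmod{I_{m,r,N;a}},$$
that is, $a^q-X^N-Y^N\equiv\sum c_{i,j}X^iY^j$. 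The hypothesis $(X+Y)^N\equiv a^q$ then turns the left side into $(X+Y)^N-X^N-Y^N$, which is part (2). Only bookkeeping is needed here: each substitution must be checked to be valid modulo $I_{m,r,N;a}$, and Proposition \ref{P1} is valid there because $N\in I_{m,r,N;a}$.

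For part (3), suppose all $c_{i,j}$ were divisible by $N$. Then the right side of part (2) would lie in $I_{m,r,N;a}$, and so would $(X+Y)^N-X^N-Y^N$, contradicting the hypothesis. Hence some $c_{i_0,j_0}\in q\mathbb Z$ is not divisible by $pq$, so $p\nmid c_{i_0,j_0}$ and $\gcd(c_{i_0,j_0},N)=q$.

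The main subtlety is whether the $c_{i,j}$ should be taken as representatives of residues (for instance in $\llbracket0,N-1\rrbracket$, as in Proposition \ref{P2}) or as the integers from Lemma \ref{L2}(2). Either choice works for part (3), because the argument only uses $c_{i,j}\bmod N$ together with $q\mid c_{i,j}$. The assumptions $p\ge3$ and $1<m<p$ are needed only so that $r\in\llbracket1,m-1\rrbracket$ is well defined and the basis of Lemma \ref{L2} applies.
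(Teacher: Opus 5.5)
Your proof is correct and follows essentially the paper's argument: the Frobenius identity at $q$ combined with Lemma \ref{L2}(2)--(3) gives part (1); Proposition \ref{P1} together with $X^p+Y^p\equiv X^r+Y^r\equiv a$ modulo $I_{m,r;a}$ gives part (2); and part (3) follows because some $c_{i_0,j_0}\in q\mathbb Z$ is not divisible by $N$. The only difference is the order of substitution: you apply the hypothesis $(X+Y)^N\equiv a^q$ last, whereas the paper first uses it to deduce $(X^q+Y^q)^p\equiv X^N+Y^N$ modulo $I_{m,r,N;a}$ and then invokes part (1).
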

\begin{proof}
We have $(X+Y)^N-(X^q + Y^q)^p\in q\mathbb{Z}[X, Y]$ by Lemma \ref{L1} applied to the prime $q$. From this and Lemma \ref{L2}(2) and (3) it follows that there exist unique integers $c_{ij}\in q\mathbb{Z}$ indexed by $(i,j)\in\mathbb I_{m,r}$ such that part (1) holds.

	For part (2), as $X^m\equiv Y^m\equiv 1 \pmod{I_{m,r; a}}$, it follows that $X^p+Y^p\equiv X^r+Y^r\equiv a \pmod{I_{m,r; a}}$. Based on this and Proposition \ref{P1}, we have:
	\[
	(X + Y)^N - a^q \equiv (X^q + Y^q)^p - X^N - Y^N \pmod{I_{m,r,N; a}}.
	\]
	As $\left( X + Y \right)^N - a^q \in I_{m,r,N; a}$, we incur that
	\[
	(X^q + Y^q)^p - X^N - Y^N \equiv 0 \pmod{I_{m,r,N; a}}.
	\]
	As $J_{m,r; a} \subset I_{m, r, N; a}$, part (2) follows from part (1) and the last congruence.
	
	For part (3), as $(X + Y)^N - X^N - Y^N \notin I_{m,r,N; a}$, it follows that
	\[
	\sum_{(i,j)\in\mathbb I_{m,r}} c_{i,j}X^iY^j \not\equiv 0\pmod {I_{m,r,N;a}}.
	\]
	This implies that there exists $(i_0, j_0)\in\mathbb I_{m,r}$ such that $N$ does not divide $c_{i_0, j_0}$, so $\gcd(c_{i_0, j_0}, N)=q$ as $N=pq$ and $c_{i_0, j_0}\in q\mathbb{Z}$. So part (3) holds.
\end{proof}


\section{On properties ($\flat$) and ($\natural$)}\label{S5}

We introduce the following property on $N\in\mathbb T$.

\begin{De}\label{D1}
	Let $N\in\mathbb T$, $a\in\mathbb Z$, and $m\in \mathbb{N}\setminus \{1, 2\}$. We say that $N$ has property $(\flat_{m;a})$ if there exist $r\in \llbracket2,m-1\rrbracket$ such that $(X+Y-a)^N \not\equiv 0 \pmod{I_{m,r,N; a}}$ and $r$ is the residue of a prime divisor of $N$ modulo $m$.
\end{De}

\begin{Le}\label{L6} Let the pairs $(m, N)\in \mathbb{N}^2$ and $(r, r')\in \llbracket1, m-1\rrbracket^2$ be such that $\gcd (r, m)=1$ and $m\mid rr'-1$. Then there exists an isomorphism 
$$R_{m, r, N; a}=\mathbb{Z}[X, Y]/I_{m, r, N; a}\cong \mathbb{Z}[X]/\bigl(\Psi_m(X), \Psi_m(a-X^r), N\bigr)$$ 
induced by the homomorphism $\mathbb{Z}[X, Y]\rightarrow\mathbb{Z}[X]$ given by the rule $(X, Y)\rightarrow\bigl(X, (a-X^r)^{r'}\bigr)$. 
\end{Le}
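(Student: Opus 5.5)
Let $\sigma:\mathbb Z[X,Y]\to\mathbb Z[X]$ be the ring homomorphism with $\sigma(X)=X$ and $\sigma(Y)=(a-X^r)^{r'}$. Let $I':=\bigl(\Psi_m(X),\Psi_m(a-X^r),N\bigr)\subset\mathbb Z[X]$ and $\bar\sigma:\mathbb Z[X,Y]\to\mathbb Z[X]/I'$ the composite. The plan is to show that $\bar\sigma$ is surjective with kernel exactly $I_{m,r,N;a}$. Surjectivity is immediate, since $X$ lies in the image. Throughout we write $rr'=1+km$ with $k\in\mathbb N\cup\{0\}$.

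\emph{Step 1: $I_{m,r,N;a}\subseteq\Ker(\bar\sigma)$.} Check the four generators.
\begin{itemize}
\item $\bar\sigma(X^m-1)=0$ and $\bar\sigma(N)=0$ trivially.
\item Put $u:=a-X^r$. Then $\sigma(Y^m-1)=u^{mr'}-1=(u^{m})^{r'}-1$, which is a multiple of $u^m-1=\Psi_m(a-X^r)\in I'$.
\item $\sigma(X^r+Y^r-a)=X^r+u^{rr'}-a=u^{1+km}-u$. Since $u^{km}-1$ is divisible by $u^m-1$, we have $u^{1+km}\equiv u\pmod{I'}$, so this image lies in $I'$.
\end{itemize}

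\emph{Step 2: $\Ker(\bar\sigma)\subseteq I_{m,r,N;a}$.} Construct an inverse map $\tau:\mathbb Z[X]/I'\to R_{m,r,N;a}$ induced by $X\mapsto X+I_{m,r,N;a}$. For this to be well defined we need $\Psi_m(a-X^r)\in I_{m,r,N;a}$. Modulo $I_{m,r,N;a}$ we have $a-X^r\equiv Y^r$, hence $\Psi_m(a-X^r)\equiv Y^{rm}-1$, which is a multiple of $Y^m-1$; the other generators are clearly sent to $0$.

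Next, $\tau\circ\bar\sigma$ is the identity of $R_{m,r,N;a}$. On $X$ this is clear. On $Y$, the element $\sigma(Y)=(a-X^r)^{r'}$ maps to $(Y^r)^{r'}=Y^{1+km}\equiv Y$ modulo $Y^m-1$. Since $\tau\circ\bar\sigma$ factors through $R_{m,r,N;a}$ by Step 1 and fixes both generators, it is the identity. In particular $\bar\sigma$ induces an injection, and together with surjectivity we obtain the isomorphism.

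No step is a real obstacle. The only point requiring care is the bookkeeping $rr'=1+km$, used twice: once to make $\sigma(X^r+Y^r-a)$ vanish and once to recover $Y$. Note also that the hypothesis $\gcd(r,m)=1$ enters only through the existence of $r'$.
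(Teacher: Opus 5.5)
Your proof is correct, but it takes a different route from the paper's.

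\textbf{The paper's route.} It does no generator-by-generator verification. It composes three isomorphisms stated earlier:
\begin{enumerate}
\item the inverse of $R_{m,1,N;a}\cong R_{m,r,N;a}$ from Theorem~\ref{TH1}(1), given by $(X,Y)\mapsto (X^{r'},Y^{r'})$;
\item the elimination isomorphism $R_{m,1,N;a}\cong\mathbb Z[X]/\bigl(\Psi_m(X),\Psi_m(a-X),N\bigr)$ of Remark~\ref{R1}(1), given by $Y\mapsto a-X$;
\item the isomorphism (\ref{EQ11.1}) induced by $X\mapsto X^r$.
\end{enumerate}
The composite sends $(X,Y)$ to $\bigl(X^{rr'},(a-X^r)^{r'}\bigr)$. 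The congruence $X^{rr'}\equiv X \pmod{\Psi_m(X)}$ then turns this into the stated map.

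\textbf{Your route.} You write down the two-sided inverse $\tau$ explicitly and check the generators, using $rr'=1+km$ twice.

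\textbf{What each buys.} Your argument is self-contained and makes explicit two things the paper leaves implicit:
\begin{itemize}
\item Theorem~\ref{TH1}(1) is stated without the modulus $N$, so it has to be pushed down to the quotient by $N$.
\item (\ref{EQ11.1}) is asserted without checking that $X\mapsto X^{r'}$ gives its inverse. That check is exactly the kind of computation you carry out.
\end{itemize}
Your version also shows that $N$ plays no real role, and that $\gcd(r,m)=1$ is used only to produce $r'$. The paper's composite route, in exchange, exhibits the ring for general $r$ as the ring for $r=1$ transported by the substitution $X\mapsto X^r$. That is the structural viewpoint behind results such as the $r$-independence in Lemma~\ref{L4}(2).
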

\begin{proof}
	By Theorem \ref{TH1}(1) we have an isomorphism $R_{m, 1, N; a}\cong R_{m, r, N; a}$ induced by the endomorphism of $\mathbb Z[X,Y]$ given by the rule $(X,Y)\rightarrow (X^r,Y^r)$. Note that its inverse $R_{m, r, N; a}\cong R_{m, 1, N; a}$ is the isomorphism induced by the endomorphism of $\mathbb Z[X,Y]$ given by the rule $(X, Y)\rightarrow (X^{r'}, Y^{r'})$. By Remark \ref{R1} we have an isomorphism
	$$R_{m, 1, N; a} \cong \mathbb{Z}[X]/\bigl(\Psi_m(X), \Psi_m(a-X), N\bigr)$$ 
	induced by the homomorphism $\mathbb{Z}[X, Y]\rightarrow\mathbb{Z}[X]$ given by the rule $(X, Y)\rightarrow (X, a-X)$.
	
	Thus, we obtain an isomorphism $R_{m, r, N; a}\cong \mathbb{Z}[X]/\bigl(\Psi_m(X), \Psi_m(a-X), N\bigr)$ induced by the homomorphism $\mathbb{Z}[X, Y]\rightarrow\mathbb{Z}[X]$ given by the rule $(X, Y)\rightarrow (X^{r'}, (a-X)^{r'})$.
	
	We also have an isomorphism 
	\begin{equation}\label{EQ11.1}
		\mathbb{Z}[X]/\bigl(\Psi_m(X),\Psi_m(a-X),N\bigr) \cong \mathbb{Z}[X]/\bigl(\Psi_m(X), \Psi_m(a-X^r), N\bigr)
	\end{equation} 
	induced by the endomorphism of $\mathbb Z[X]$ given by the rule $X\rightarrow X^r$. So the homomorphism $\mathbb Z[X,Y]\rightarrow\mathbb Z[X]$ given by the rule $(X, Y)\rightarrow\bigl(X^{r'r}, (a-X^r)^{r'}\bigr)$ defines an isomorphism 
	$$R_{m, r, N; a}\cong \mathbb{Z}[X]/\bigl(\Psi_m(X), \Psi_m(a-X^r), N\bigr).$$
	As $X^{r'r}\equiv X \pmod{\Psi_m(X)}$, the lemma holds.
\end{proof}

\begin{Pro}\label{P4} Let $N\in\mathbb{T}$, $a\in \mathbb{Z}$, and $m\in \mathbb{N}\setminus \{1, 2\}$ be such that $\gcd(m, N)=1$. Then $N$ has property $(\flat_{m; a})$ iff there exists a prime $p\in\Div(N)\setminus\Div(m)$ such that for $r\in \llbracket2,m-1\rrbracket$ with $m\mid p-r$ and $r'\in \llbracket1,m-1\rrbracket$ with $m\mid rr'-1$, in $\mathbb Z[X,Y]$ we have the noncongruence 
\begin{equation}\label{EQ11.2}
\bigl(X+(a-X^r)^{r'}-a\bigr)^N\not\equiv 0 \pmod{(\Psi_m(X), \Psi_m(a-X^r), N)}.
\end{equation}
\end{Pro}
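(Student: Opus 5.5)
The plan is to unwind Definition \ref{D1} and transport the non-membership condition through the isomorphism of Lemma \ref{L6}.

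First I would reconcile the two choices of a prime. Definition \ref{D1} asks for some $r\in\llbracket2,m-1\rrbracket$ that is the residue modulo $m$ of a prime divisor $p$ of $N$. The proposition instead asks for $p\in\Div(N)\setminus\Div(m)$. Since $\gcd(m,N)=1$, every prime divisor $p$ of $N$ automatically lies outside $\Div(m)$. Moreover its residue $r$ satisfies $\gcd(r,m)=\gcd(p,m)=1$, so there is a unique $r'\in\llbracket1,m-1\rrbracket$ with $m\mid rr'-1$. Thus the data $(p,r)$ in the two formulations range over the same set, and $r'$ is well defined. It then suffices to show, for each such pair $(r,r')$, that
$$(X+Y-a)^N\notin I_{m,r,N;a}\iff \bigl(X+(a-X^r)^{r'}-a\bigr)^N\notin \bigl(\Psi_m(X),\Psi_m(a-X^r),N\bigr).$$

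For this equivalence I would apply Lemma \ref{L6}, which is available because $\gcd(r,m)=1$ and $m\mid rr'-1$. It gives an isomorphism $R_{m,r,N;a}\cong\mathbb Z[X]/\bigl(\Psi_m(X),\Psi_m(a-X^r),N\bigr)$ induced by the ring homomorphism $\sigma\colon\mathbb Z[X,Y]\to\mathbb Z[X]$ with $\sigma(X)=X$ and $\sigma(Y)=(a-X^r)^{r'}$. Since $\sigma$ is a ring homomorphism,
$$\sigma\bigl((X+Y-a)^N\bigr)=\bigl(X+(a-X^r)^{r'}-a\bigr)^N.$$
Because the induced map is an isomorphism, the class of an element $f$ is zero in $R_{m,r,N;a}$ if and only if the class of $\sigma(f)$ is zero in the target quotient. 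In other words, $\sigma^{-1}\bigl((\Psi_m(X),\Psi_m(a-X^r),N)\bigr)=I_{m,r,N;a}$. Applying this to $f=(X+Y-a)^N$ gives the displayed equivalence, and taking the existential quantifier over $p$ on both sides proves the proposition.

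The only point needing care is that Lemma \ref{L6} is stated as an isomorphism induced by $\sigma$, which implicitly uses $\sigma(I_{m,r,N;a})\subset(\Psi_m(X),\Psi_m(a-X^r),N)$. I would verify this directly. We have $\sigma(X^r+Y^r-a)=X^r+(a-X^r)^{rr'}-a$. Modulo $\Psi_m(a-X^r)$ we get $(a-X^r)^{rr'}\equiv a-X^r$, because $m\mid rr'-1$ and $(a-X^r)^m\equiv1$; hence $\sigma(X^r+Y^r-a)$ lies in the target ideal. For the other generator, $\sigma(Y^m-1)=(a-X^r)^{r'm}-1$, which is divisible by $(a-X^r)^m-1=\Psi_m(a-X^r)$. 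Beyond this check, the argument is bookkeeping.
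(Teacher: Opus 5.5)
Your proof is correct and follows the same route as the paper: unwind Definition~\ref{D1}, then transport the non-membership of $(X+Y-a)^N$ in $I_{m,r,N;a}$ through the isomorphism of Lemma~\ref{L6}, which is induced by $(X,Y)\mapsto\bigl(X,(a-X^r)^{r'}\bigr)$. You make explicit what the paper's two-line proof leaves implicit: that $\gcd(m,N)=1$ puts every prime divisor outside $\Div(m)$, so $r'$ exists, and that $\sigma$ respects the ideals. The injectivity needed for the forward direction is rightly taken from Lemma~\ref{L6}.
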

\begin{proof} By definition, $N$ has property $(\flat_{m; a})$ iff there exists a prime divisor $p$ of $N$ such that for $r\in \llbracket 2, m-1\rrbracket$ with $m\mid p-r$ we have $(X+Y-a)^N\not\equiv 0\pmod{I_{m, r, N;a}}$. Applying the isomorphism of Lemma \ref{L6}, we get that Noncongruence (\ref{EQ11.2}) holds. 
\end{proof}

In the next proposition we provide a concrete criterion, based on the existence of nontrivial solutions to Fermat-type equations over finite fields, on when $N$ has property $(\flat_{m;a})$. This allows us to connect certain algebraic properties of finite prime fields to the factorization of integers.

\begin{Pro}\label{P5}
Let $N\in\mathbb{T}$, $a\in \mathbb{Z}$, and $m\in \mathbb{N}\setminus \{1, 2\}$. Let $p$ be a prime divisor of $N$ such that $p=mn+r$ with $r\in \llbracket1, m-1\rrbracket$ and $n\in\mathbb N\cup\{0\}$. 
	Let $q$ be a prime divisor of $N$ which does not divide $m$. Let $F$ be a finite field of characteristic $q$ which contains a primitive $m$-th root of unity and let $G$ be the subgroup of the group of units of $F$ generated by it. Assume that Equation (\ref{EQ1.25}) has a solution $(X_1, Y_1)\in G^2$ such that $X_1 + Y_1 \neq a$. Then
	\[
	\left( X + Y - a \right)^N \not\equiv 0 \pmod{I_{m,r,N; a}}
	\]
and thus $N$ has property $(\flat_{m;a})$.	
\end{Pro}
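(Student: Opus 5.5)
The plan is to detect the nonvanishing of $(X+Y-a)^N$ modulo $I_{m,r,N;a}$ by evaluating at the given solution. We build a ring homomorphism from $\mathbb Z[X,Y]$ to the field $F$ that kills $I_{m,r,N;a}$ and does not kill $(X+Y-a)^N$.

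\textbf{Step 1: the evaluation map.} Let $(X_1,Y_1)\in G^2$ satisfy $X_1^p+Y_1^p=a$ in $F$ and $X_1+Y_1\neq a$. Consider the unique ring homomorphism $\rho:\mathbb Z[X,Y]\rightarrow F$ with $\rho(X)=X_1$ and $\rho(Y)=Y_1$. We check that $\rho$ kills each generator of $I_{m,r,N;a}$.
\begin{itemize}
\item Since $G$ is generated by a primitive $m$-th root of unity, every element of $G$ has order dividing $m$. Hence $X_1^m=Y_1^m=1$, so $\rho$ kills $X^m-1$ and $Y^m-1$.
\item Since $p=mn+r$, we get $X_1^p=(X_1^m)^nX_1^r=X_1^r$ and similarly $Y_1^p=Y_1^r$. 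Therefore $\rho(X^r+Y^r-a)=X_1^p+Y_1^p-a=0$.
\item As $q\mid N$ and $F$ has characteristic $q$, we have $\rho(N)=0$.
\end{itemize}
Thus $I_{m,r,N;a}\subseteq\Ker(\rho)$, so $\rho$ factors through $R_{m,r,N;a}$.

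\textbf{Step 2: the image of $(X+Y-a)^N$ is nonzero.} We have $\rho\bigl((X+Y-a)^N\bigr)=(X_1+Y_1-a)^N$. By hypothesis $X_1+Y_1-a\neq 0$, and $F$ is a field, so this power is nonzero. Hence $(X+Y-a)^N\notin I_{m,r,N;a}$, which is the displayed noncongruence.

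\textbf{Step 3: property $(\flat_{m;a})$.} Definition \ref{D1} requires $r\in\llbracket 2,m-1\rrbracket$, so we must exclude $r=1$; this is the only point needing care. Suppose $r=1$. Then Step 1 gives $X_1^p=X_1$ and $Y_1^p=Y_1$. The equation $X_1^p+Y_1^p=a$ then becomes $X_1+Y_1=a$, contradicting the hypothesis. So $r\ge 2$, and $r$ is the residue modulo $m$ of the prime divisor $p$ of $N$. Together with Step 2, this shows that $N$ has property $(\flat_{m;a})$.

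The argument does not use $p\neq q$ or $\gcd(m,r)=1$; only $q\nmid m$ is needed, and it enters solely to guarantee that $F$ contains a primitive $m$-th root of unity.
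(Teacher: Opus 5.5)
Your proof is correct and is essentially the paper's argument. The paper passes to the ideal $I^F_{m,r;a}=(X^m-1,Y^m-1,X^r+Y^r-a)$ of $F[X,Y]$ and observes that $(X_1,Y_1)$ lies in its zero set while $(X+Y-a)^N$ does not vanish there, which is exactly your evaluation homomorphism $\rho$. The paper also shows that $I^F_{m,r;a}$ is radical via an \'etaleness argument (invoking Lemma \ref{L6}, and hence implicitly $\gcd(m,r)=1$); your direct evaluation shows this step is unnecessary. Your check that $r\neq 1$ supplies a detail for the conclusion $(\flat_{m;a})$ that the paper leaves implicit.
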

\begin{proof} Let $I^F_{m,r; a}$ be the ideal of $F[X, Y]$ generated by $X^m-1$, $Y^m-1$ and $X^{r}+Y^{r}-a$. As the $\mathbb F_q$-algebra $\mathbb F_q[X]/(X^m-1)$ is \'etale, its nilpotent radical is $\{0\}$. The same holds for quotients of it. Based on this and Lemma \ref{L6} it follows that the $F$-algebra $F[X,Y]/(I^F_{m,r; a})$ is also \'etale and hence $I^F_{m,r; a}$ is a radical ideal of $F[X,Y]$.

As $q$ divides $N$, to prove the proposition it suffices to show that 
	$(X+Y-a)^N \notin I^F_{m,r; a}$. 
	
	Let $V(I^F_{m,r;a}):=\{(X, Y)\in F\times F \mid f(X, Y)=0 \text{ for all } f \in I^F_{m,r; a}\}$ be the null set in $F^2$ of the ideal $I^F_{m,r;a}$. We have $(X_1, Y_1)\in V(I^F_{m,r; a})$. As $X_1+Y_1\neq a$, $(X_1, Y_1)$ does not belong to the null set in $F^2$ of $(X + Y-a)^N$. So $(X + Y - a)^N \notin \sqrt{I^F_{m,r; a}}=I^F_{m,r; a}$ and the proposition holds.
\end{proof}

Now we define a property $(\natural_{m; a})$ which is stronger than property $(\flat_{m; a})$.

\begin{De}\label{D2}
	Let $(a, N)\in \mathbb{Z}\times \mathbb T$. 
	
	\medskip
	{\bf (1)} Let $p$ and $q$ be distinct prime divisors of $N$ and $m\in\mathbb N \setminus\{1, 2\}$. We say that $N$ has property $(\natural)$ with respect to $(p,q,m;a)$ if $p\nmid m$ and Equation (\ref{EQ1.25}) has a solution $(X_1, Y_1)$ formed by $m$-th roots of unity in $\overline{\mathbb F_q}$ with $X_1+ Y_1 \neq a$.
	
	\smallskip
	{\bf (2)} Let $p$ be a prime divisor of $N$ and $m\in\mathbb N \setminus\{1, 2\}$. We say that $N$ has property $(\natural)$ with respect to $(p,m; a)$ if there exists a prime divisor $q$ of $N$ 
	which is distinct from $p$ and such that $N$ has property $(\natural)$ with respect to $(p,q,m; a)$.
	
	\smallskip
	{\bf (3)} We say that $N$ has property $(\natural)$ if there exists $a\in \mathbb{Z}$ and $m\in\mathbb N\setminus\{1, 2\}$ such that $N$ has property $(\natural)$ with respect to $(p,q,m; a)$ for a pair $(p,q)$ of distinct prime divisors of $N$. In this case, we shall also refer to $N$ as having property $(\natural_{m; a})$. 
\end{De}

In the next section we show that for a fixed $m\in \mathbb{N}$ one can find a proper divisor of $N$ in polynomial time in $m$, $a$, and $\log N$ for all $N\in\mathbb T$ that have property $(\natural_{m;a})$.

There exist many studies on vanishing sums of roots of unity in the literature (e.g., see \cite{LL1}, Main Theorem over $\mathbb C$ and see \cite{LL}, Theorem 1.3 over finite fields) and one hopes to use some of their ideas for studying properties $(\flat_{m; a})$ and $(\natural_{m; a})$.

For $N\in\mathbb N$, if $N$ has property $(\natural_{m; a})$, then $N$ also has property $(\flat_{m; a})$ by Proposition \ref{P5}. 

Recall $U_m$ is the multiplicative group of units of $\mathbb Z/m\mathbb Z=\{[0]_m,\ldots,[m-1]_m\}$. 

\begin{Pro}\label{P6}
Let $a\in\mathbb Z$, $m\in\mathbb N$, and $q$ a prime number such that $q\nmid m$ and $q\mid\Delta_{m;a}$. Let $\zeta\in\overline{\mathbb F_q}$ be a primitive $m$-th roots of unity. Recall $\Stab_{m,q,\zeta;a}$ is the stabilizer of the subset $\Omega_{m,q,\zeta;a}$ of $\mathbb I_m$ with respect to the action $U_m\times\mathbb I_m\rightarrow\mathbb I_m$. Then the following properties hold.
	
\medskip
	{\bf (1)} Let $N\in q\mathbb N$ be such that it has a prime divisor $p$ with $p\nmid mq$. Then $N$ has property $(\natural)$ with respect to $(p,q,m;a)$ iff $[p]_m\notin\Stab_{m,q,\zeta;a}$.
	
\smallskip
	{\bf (2)} Assume $q\mid 2^m-1$, i.e., q is odd and $o_q(2)\mid m$. Then $\Stab_{m,q,\zeta;1}$ is a subgroup of the kernel $\Ker_{m,o_q(2)}$ of the reduction modulo $o_q(2)$ epimorphism $U_m\rightarrow U_{o_q(2)}$. In particular, we have that the order $|\Stab_{m,q,\zeta;1}|$ divides $|\Ker_{m,o_q(2)}|=\frac{\phi(m)}{\phi\bigl(o_q(2)\bigl)}$. 
	
\smallskip
	{\bf (3)} Assume $q$ is odd and $m=o_q(2)$. Then $\Stab_{m,q,\zeta;1}=\{[1]_m\}$ and for each prime $p$ with $p>m$ and $m\nmid p-1$, every $N\in\mathbb N$ with $pq \mid N$ has property $(\natural)$ with respect to $(p, q, m;1)$. 
	
	\smallskip
	{\bf (4)} Suppose that $t_{m,q;1}\in\llbracket1,\phi(m)-1\rrbracket$ and the intersection $\Omega_{m,q,\zeta;1}\cap\mathbb U_m$ is nonempty. Let $$d_{m,q}:=\max\bigl(\Div(\phi(m))\cap [1,t_{m,q;1}]\bigr)\in\mathbb N.$$ 
	Then $d_{m,q}\in \Big[1,\frac{\phi(m)}{2}\Big]$ and there exist distinct elements $[r_1]_m,\ldots,[r_{\phi(m)-d_{m,q}}]_m\in U_m$ such that if $[p]_m=[r_i]_m$ for some 
	$i\in \llbracket1, \phi(m)-d_{m,q}\rrbracket$, then $N$ has property $(\natural)$ with respect to $(p,q,m;1)$.	
\end{Pro}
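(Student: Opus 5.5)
The plan is to translate everything into the combinatorics of the sets $\Omega_{m,q,\zeta;a}$. Every $m$-th root of unity of $\overline{\mathbb F_q}$ has the form $\zeta^i$ with a unique $i\in\llbracket0,m-1\rrbracket$, since $q\nmid m$.

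\emph{Part (1).} A pair $(X_1,Y_1)=(\zeta^i,\zeta^j)$ solves Equation (\ref{EQ1.25}) with exponent $p$ iff $\zeta^{pi}+\zeta^{pj}=a$, i.e.\ iff $[p]_m\cdot(i,j)\in\Omega_{m,q,\zeta;a}$. The condition $X_1+Y_1\neq a$ says exactly that $(i,j)\notin\Omega_{m,q,\zeta;a}$. So property $(\natural)$ with respect to $(p,q,m;a)$ is equivalent to the statement that $[p]_m^{-1}\Omega_{m,q,\zeta;a}\not\subset\Omega_{m,q,\zeta;a}$. As $p$ is a prime with $p\nmid m$, $[p]_m\in U_m$ acts bijectively on the finite set $\mathbb I_m$. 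Two sets of equal size are related by inclusion only if they are equal, so the condition becomes $[p]_m^{-1}\notin\Stab_{m,q,\zeta;a}$. Since the stabilizer is a subgroup, this is equivalent to $[p]_m\notin\Stab_{m,q,\zeta;a}$. Finally, $p\neq q$ and $p\nmid m$ are given by $p\nmid mq$, so all requirements of Definition \ref{D2}(1) are met.

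\emph{Part (2).} For $a=1$, the diagonal elements of $\Omega_{m,q,\zeta;1}$ are the $(i,i)$ with $2\zeta^i=1$. Because $q\mid 2^m-1$, the element $\frac12\in\mathbb F_q$ is an $m$-th root of unity of order $o_q(2)$, so there is exactly one such pair $(k,k)$, with $\zeta^k=\frac12$. Each $[u]_m\in U_m$ maps the diagonal to itself. Hence any $[u]_m\in\Stab_{m,q,\zeta;1}$ must fix $(k,k)$, which gives $\zeta^{uk}=\zeta^k$, i.e.\ $(\frac12)^u=\frac12$, i.e.\ $o_q(2)\mid u-1$. This shows $\Stab_{m,q,\zeta;1}\subset\Ker_{m,o_q(2)}$. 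Lagrange's theorem gives the divisibility. The order formula $|\Ker_{m,o_q(2)}|=\frac{\phi(m)}{\phi(o_q(2))}$ follows from the surjectivity of the reduction map $U_m\to U_{o_q(2)}$, which is standard.

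\emph{Part (3).} Here the kernel is trivial, so part (2) gives $\Stab_{m,q,\zeta;1}=\{[1]_m\}$. We have $m=o_q(2)\mid q-1$, so $q\nmid m$. By Theorem \ref{TH1}(7.c), $(2^m-1)\mid\Delta_{m;1}$, hence $q\mid\Delta_{m;1}$, so the hypotheses of the proposition hold. For a prime $p>m$ we have $p\nmid m$. Also $p\neq q$, because $m\mid q-1$ while $m\nmid p-1$. Thus $p\nmid mq$, and $[p]_m\neq[1]_m$ since $m\nmid p-1$. Part (1) then concludes.

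\emph{Part (4).} This is the main counting step. Pick $(i,j)\in\Omega_{m,q,\zeta;1}\cap\mathbb U_m$. The action of $U_m$ on $\mathbb U_m$ is free, so the orbit map $U_m\to\mathbb I_m$, $[u]_m\mapsto[u]_m\cdot(i,j)$, is injective. It sends $\Stab_{m,q,\zeta;1}$ into $\Omega_{m,q,\zeta;1}$, whose cardinality is $t_{m,q;1}$ by Corollary \ref{C2}(3) (using $q\nmid m$). Hence $|\Stab_{m,q,\zeta;1}|\le t_{m,q;1}$. Being a subgroup order, it also divides $\phi(m)$, so $|\Stab_{m,q,\zeta;1}|\le d_{m,q}$. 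Since $t_{m,q;1}<\phi(m)$, the divisor $d_{m,q}$ is a proper divisor of $\phi(m)$, which gives $d_{m,q}\le\frac{\phi(m)}2$. The complement $U_m\setminus\Stab_{m,q,\zeta;1}$ then has at least $\phi(m)-d_{m,q}$ elements; take $[r_1]_m,\ldots,[r_{\phi(m)-d_{m,q}}]_m$ among them. A prime $p$ with $[p]_m$ equal to one of these is a unit modulo $m$, so $p\nmid m$. It is not in the stabilizer, so part (1) applies, provided $p\neq q$ (implicit in Definition \ref{D2}(1), since $p,q$ are assumed distinct prime divisors of $N$).

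The only delicate points I expect are the distinctness bookkeeping ($p\neq q$ and $q\nmid m$) and the claim that $\frac12$ has order exactly $o_q(2)$ in part (2).
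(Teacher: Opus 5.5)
Your proposal is correct and follows the paper's proof essentially step by step. It uses the same reformulation of $(\natural)$ as $[p]_m^{-1}\cdot\Omega_{m,q,\zeta;a}\not\subset\Omega_{m,q,\zeta;a}$ in part (1), and the same unique diagonal point $\zeta^k=\frac12$ of $\Omega_{m,q,\zeta;1}$ forcing $o_q(2)\mid u-1$ in part (2). Part (4) rests on the same free-orbit bound combined with Lagrange's theorem. The differences are cosmetic: you avoid normalizing $\zeta$ so that $\zeta^{-s}$ is the image of $2$, and you explicitly verify the side conditions ($q\nmid m$, $q\mid\Delta_{m;1}$, $p\neq q$) that the paper leaves implicit.
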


\begin{proof}
	Recall that $p\mid\Delta_{m;a}$ implies that $|\Omega_{m,q,\zeta;a}|=t_{m,q;a}\ge 1$ by Corollary \ref{C2}(1) and (3).
	
	For part (1) we use the contrapositive. So, $N$ does not have property $(\natural)$ with respect to $(p,q,m;a)$ iff for each solution of the equation $X_1^p + Y_1^p = a$ formed by $m$-th roots of unity in $\overline{\mathbb F_q}$ we have $X_1 + Y_1 = a$. So using $X_1=\zeta^i$ and $Y_1=\zeta^j$ with $(i,j)\in\mathbb I_m$ we get that $N$ does not have property $(\natural)$ with respect to $(p,q,m;a)$ iff for $(i,j)\in\mathbb I_m$, $[p]_m\cdot (i,j)\in\Omega_{m,q,\zeta;a}$ implies that $(i,j)\in\Omega_{m,q,\zeta;a}$, and hence iff $[p]_m^{-1}\cdot\Omega_{m,q,\zeta;a}\subset \Omega_{m,q,\zeta;a}$. As this is equivalent to $[p]_m^{-1}\cdot\Omega_{m,q,\zeta;a}=\Omega_{m,q,\zeta;a}$ and hence to $[p]_m\cdot\Omega_{m,q,\zeta;a}=\Omega_{m,q,\zeta;a}$, we conclude that $N$ does not have property $(\natural)$ with respect to $(p,q,m;a)$ iff $[p]_m\in\Stab_{m,q,\zeta;a}$. So part (1) holds.
	
	For part (2), let $s\in\mathbb N$ be such that $m=o_q(2)s$. We can assume that $\zeta$ is such that $\zeta^{-s}$ is the image of $2$ in $\mathbb F_q$. Thus $(s,s)\in\Omega_{m,q,\zeta;1}$. As $\zeta^s$ is the only solution of the equation $X+X=1$ in $\mathbb F_q$, we have $\Omega_{m,q,\zeta;1}\cap\{(i,i)|i\in \llbracket0,m-1\rrbracket=\{(s,s)\}$. Thus $\Stab_{m,q,\zeta;1}$ stabilizes $\{(s,s)\}$ and hence $\Stab_{m,q,\zeta;1}$ is a subgroup of 
	$$\{[u]_m\in U_m|m\mid us-s\}= \{[u]_m\in U_m|o_q(2)\mid u-1\},$$ i.e., of $\Ker_{m,o_q(2)}$. From this and Lagrange Theorem we get that part (2) holds.
	
	For part (3), we have $s=1$ and thus $|\Stab_{m,q,\zeta;1}|\mid 1$ by part (2). So $\Stab_{m,q,\zeta;1}=\{[1]_m\}$. From this and part (1) we get that part (3) holds.
	
	For part (4), as $|\Omega_{m,q,\zeta;1}|=t_{m,q;1}$ by Corollary \ref{C2}(3), we have $1\le |\Omega_{m,q,\zeta;1}|<\phi(m)$. As the action of $U_m$ on $\mathbb U_m$ is free and $\Omega_{m,q,\zeta;1}\cap\mathbb U_m\neq\emptyset$, we have $|\Stab_{m,q,\zeta;1}|\le t_{m,q;1}$. As $|\Stab_{m,q,\zeta;1}|\mid\phi(m)$ by Lagrange Theorem, we have $|\Stab_{m,q,\zeta;1}|\in\Div\bigl(\phi(m)\bigr)\cap [1,t_{m,q;1}]$; thus $|\Stab_{m,q,\zeta;1}|\le d_{m,q}$. As the largest divisor of $\phi(m)$ in $\llbracket1,\phi(m)-1\rrbracket$ is at most $\frac{\phi(m)}{2}$, we have $d_{m,q}\le \frac{\phi(m)}{2}$.  Thus $|U_m\setminus\Stab_{m,q,\zeta;1}|\ge\phi(m)-d_{m,q}\ge \frac{\phi(m)}{2}$; so there exist distinct elements $[r_1]_m,\ldots,[r_{\phi(m)-d_{m,q}}]_m\in U_m$ such that $\{[r_1]_m,\ldots,[r_{\phi(m)-d_{m,q}}]_m\}\subset U_m\setminus\Stab_{m,q,\zeta;1}$.  
	
	If $[p]_m=[r_i]_m$ for some index $i\in \llbracket1, \phi(m)-d_{m,q}\rrbracket$, then $[p]_m\notin\Stab_{m,q,\zeta;1}$, hence $N$ has property $(\natural)$ with respect to $(p,q,m;1)$ by part (1). So part (4) holds.
\end{proof}

Next we exemplify Proposition \ref{P6}(1) and (2).

\begin{Ex}\normalfont\label{EX3}
	Let $(k,l)\in\mathbb N^2$. With $m:=kl+1$, let $q$ be a prime divisor of $2^m-1$. Let $p$ be a prime with $p\equiv k \pmod{m}$. Let $N\in \mathbb{N}$ be such that $pq \mid N$. Let $x$ be the image of $2^l$ in $\mathbb F_q$. The triple $(x, x, 1)\in \mathbb{F}_q^3$ satisfies Equation (\ref{EQ00}), i.e., we have 
	$$(2^l)^k+(2^l)^k=2^{kl}+2^{kl}=2^{kl+1}=2^m\equiv 1 \pmod{q}.$$ 
	If $2^{l+1}\not\equiv 1 \pmod{m}$, then this triple does not satisfy the equation $X+Y\neq Z$ and thus $N$ has property $(\natural)$ with respect to $(p, q, m; 1)$. From this and Proposition \ref{P6}(1) we get that we have $[k]_m\notin\Stab_{m,q,\zeta;1}$ for each primitive $m$-th root of unity $\zeta\in\overline{\mathbb F_q}$. Note that, as 
	$$(2^{l+1})^k=2^{kl+k}=2^{m-1+k}\equiv 2^{k-1} \pmod{m},$$ 
	if $2^{k-1}\not\equiv 1 \pmod{m}$, then $2^{l+1}\not\equiv 1 \pmod{m}$. 
\end{Ex}	

Next we prove a converse of Proposition \ref{P5} for a square free $N\in\mathbb T$.

\begin{Pro}\label{P7}
	Let $N\in \mathbb{T}$ be square free, $a\in \mathbb{Z}$, and $m\in \mathbb{N}\setminus\{1, 2\}$.
	Then $N$ has property $(\natural_{m; a})$ iff $N$ has property $(\flat_{m; a})$.
\end{Pro}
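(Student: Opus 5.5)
The plan is to prove both implications by evaluating at points of $\overline{\mathbb F_\ell}$ for primes $\ell\mid N$. The direction $(\flat_{m;a})\Rightarrow(\natural_{m;a})$ will use square-freeness of $N$ through the Chinese Remainder Theorem.

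\emph{($\natural_{m;a}\Rightarrow\flat_{m;a}$).} Suppose $N$ has property $(\natural)$ with respect to $(p,q,m;a)$, witnessed by $m$-th roots of unity $X_1,Y_1\in\overline{\mathbb F_q}$ with $X_1^p+Y_1^p=a$ and $X_1+Y_1\neq a$. Let $r$ be the residue of $p$ modulo $m$.
\begin{enumerate}
\item Since $p\nmid m$, we have $r\neq 0$.
\item Since $X_1^m=Y_1^m=1$, we get $X_1^p=X_1^r$ and $Y_1^p=Y_1^r$.
\item If $r=1$, then $X_1+Y_1=X_1^p+Y_1^p=a$, a contradiction. Hence $r\in\llbracket2,m-1\rrbracket$.
\end{enumerate}
Rather than invoking Proposition \ref{P5}, which needs $q\nmid m$, I will argue directly. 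Consider the ring homomorphism $\mathbb Z[X,Y]\to\overline{\mathbb F_q}$ given by $X\mapsto X_1$, $Y\mapsto Y_1$. It kills $X^m-1$ and $Y^m-1$, it kills $X^r+Y^r-a$ by step 2, and it kills $N$ because $q\mid N$. So it factors through $R_{m,r,N;a}$. It sends $(X+Y-a)^N$ to $(X_1+Y_1-a)^N\neq0$. Hence $(X+Y-a)^N\notin I_{m,r,N;a}$, which is property $(\flat_{m;a})$.

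\emph{($\flat_{m;a}\Rightarrow\natural_{m;a}$).} Take $r\in\llbracket2,m-1\rrbracket$, the residue modulo $m$ of a prime $p\mid N$, with $(X+Y-a)^N\notin I_{m,r,N;a}$.
\begin{enumerate}
\item As $N$ is square free, $\mathbb Z/N\mathbb Z\cong\prod_{\ell\mid N}\mathbb F_\ell$, so $R_{m,r,N;a}\cong\prod_{\ell\mid N}R_{m,r,\ell;a}$. Hence the image of $(X+Y-a)^N$ is nonzero in some factor $R_{m,r,\ell;a}$.
\item In particular $X+Y-a$ is not nilpotent in the finite $\mathbb F_\ell$-algebra $R_{m,r,\ell;a}$ (finite by Lemma \ref{L2}(4)). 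So it lies outside some maximal ideal.
\item That maximal ideal gives a point $(x,y)\in\overline{\mathbb F_\ell}^2$ with $x^m=y^m=1$, $x^r+y^r=a$ and $x+y\neq a$.
\item The prime $\ell$ is not $p$. Indeed, if $\ell=p$, then $m\mid p-r$ gives $x^p+y^p=x^r+y^r=a$. By Lemma \ref{L1} and $a^p=a$ in $\mathbb F_p$ this yields $(x+y)^p=a^p$, hence $x+y=a$, a contradiction.
\item So $q:=\ell\neq p$, and $(x,y)$ is a solution of Equation (\ref{EQ1.25}) in $m$-th roots of unity of $\overline{\mathbb F_q}$ with $x+y\neq a$.
\end{enumerate}

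The main obstacle is the extra requirement $p\nmid m$ in Definition \ref{D2}(1). Definition \ref{D1} only asks that $r\in\llbracket2,m-1\rrbracket$ be the residue of some prime divisor $p$ of $N$, and this allows $p\mid m$ with $r=p<m$. In that case the argument above still yields a Fermat-type solution modulo $q$, but not formally property $(\natural)$ for this $p$. I would first try to rule this case out or reduce it. One option is to replace $m$ by a divisor. Another is to check whether the intended reading of $(\flat_{m;a})$ implicitly requires $\gcd(r,m)=1$, as is tacitly assumed in Lemma \ref{L6} and Proposition \ref{P4}. With that reading, $p=r$ prime and $\gcd(p,m)=1$ give $p\nmid m$ at once, and the proof closes.
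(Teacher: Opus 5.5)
Your direction $(\natural_{m;a})\Rightarrow(\flat_{m;a})$ is correct. Evaluating at $(X_1,Y_1)$ is also cleaner than the paper's appeal to Proposition \ref{P5}, which needs $q\nmid m$ and a radicality argument that plays no role in showing $f\notin I$.

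The gap is step 2 of the converse. From $(X+Y-a)^N\neq 0$ in $R_{m,r,\ell;a}$ you cannot conclude that $X+Y-a$ is non-nilpotent unless that ring is reduced. It is reduced when $\ell\nmid m$, being a quotient of the \'etale algebra $\mathbb F_\ell[X,Y]/(X^m-1,Y^m-1)$. It need not be reduced when $\ell\mid m$, and then the conclusion genuinely fails.

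Take $N=10$, $m=48$, $a=1$, $r=5$, so $\gcd(r,m)=1$ and $r'=29$.
\begin{itemize}
\item Over $\mathbb F_2$ we have $X^{48}-1=(X^3-1)^{16}$ and $(1-X^5)^{48}-1=\bigl((1+X^5)^3+1\bigr)^{16}$. Since $(1+X^5)^3+1$ vanishes at a root $\omega$ of $f=X^2+X+1$ but not at $1$, Lemma \ref{L6} gives $R_{48,5,2;1}\cong\mathbb F_2[X]/(f^{16})$.
\item The image of $X+Y-1$ is $z=X+(1+X^5)^{29}+1$. It satisfies $z(\omega)=0$ and $z'(\omega)=1+\omega^2\neq 0$, so $f$ divides $z$ exactly once. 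Hence $z$ is nilpotent but $z^{10}\neq 0$, and $10$ has $(\flat_{48;1})$.
\item Yet $10$ does not have $(\natural_{48;1})$. The only prime divisor of $10$ prime to $48$ is $5$. The $48$-th roots of unity in $\overline{\mathbb F_2}$ are cube roots of unity, for which $x^5+y^5=(x+y)^2$, so $x^5+y^5=1$ forces $x+y=1$.
\end{itemize}
So the reading of $(\flat_{m;a})$ with $\gcd(r,m)=1$ that you suggest at the end does not close the proof. The case $p\mid m$ that you flag cannot be ruled out either. With $N=6$, $m=6$, $a=2$, $r=2$, evaluating at $(1,-1)\in\mathbb F_3^2$ gives $(\flat_{6;2})$, while $(\natural_{6;2})$ is impossible because no prime divisor of $6$ is prime to $6$. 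Both examples are counterexamples to the proposition as literally stated.

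What is missing is the hypothesis $\gcd(m,N)=1$, which the paper's proof uses tacitly in two places:
\begin{itemize}
\item It invokes Proposition \ref{P4}, which is stated under $\gcd(m,N)=1$.
\item It passes from $\theta_{m,q,r;a}\nmid\bigl(X+(a-X^r)^{r'}-a\bigr)^N$ to a root of $\theta_{m,q,r;a}$ at which $X+(a-X^r)^{r'}-a$ does not vanish. That step requires $\theta_{m,q,r;a}$ to be separable, i.e.\ $q\nmid m$.
\end{itemize}
With $\gcd(m,N)=1$ added, $p\nmid m$ is automatic and every $R_{m,r,\ell;a}$ is reduced, so your step 2 holds and your steps 1 to 5 give a correct proof. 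Your proof then runs parallel to the paper's: the Chinese Remainder Theorem over the primes of $N$, a geometric point with $x+y\neq a$, and Frobenius to force $\ell\neq p$. The only difference is that you work with maximal ideals of the two-variable ring, whereas the paper uses roots of $\theta_{m,q,r;a}$ after the one-variable reduction of Lemma \ref{L6}.
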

\begin{proof}
	The `only if' part was proved in Proposition \ref{P5}. We now proof the `if' part.
	
	Suppose that $(X+Y-a)^N\not\equiv 0 \pmod{I_{m, N, r; a}}$, where $r\in \llbracket2, m-1\rrbracket$ is the residue of a prime divisor $p$ of $N$ modulo $m$.
	By Proposition \ref{P4} we have $(X+(a-X^r)^{r'}-a)^N\not\equiv 0 \pmod{(\Psi_m(X), \Psi_m(a-X^r), N)}$, where $r'\in \llbracket2, m-1\rrbracket$ is such 
	that $m\mid rr'-1$. As $N$ is square free, by the Chines Remainder Theorem we have an isomorphism of rings
	$$(\mathbb{Z}/N\mathbb{Z})[X]\cong (\mathbb{Z}/p_1\mathbb{Z})[X] \times \cdots \times (\mathbb{Z}/p_k\mathbb{Z})[X]=\prod_{i=1}^k \mathbb F_{p_i}[X],$$ where $p_1, \ldots, p_k$
	are the distinct prime divisors of $N$. Therefore, there exists $q\in \{p_1, \ldots, p_k\}$ such that 
	$(X+(a-X^r)^{r'}-a)^N\not\equiv 0 \pmod{(\Psi_m(X), \Psi_m(a-X^r), q)}$. In $\mathbb{F}_q[X]$ we have that $\theta_{m,q,r;a}(X)\mid X^m-1$, $\theta_{m,q,r;a}(X)\mid (a-X^r)^m-1$ and 
	$\theta_{m,q,r;a}(X)\nmid (X+(a-X^r)^{r'}-a)^N$. Thus, there exists a root $X_1$ of $\theta_{m,q,r;a}(X)$ in $\overline{\mathbb F_q}$ such that $X_1+(a-X_1^r)^{r'}-a\neq 0$. 
	The mentioned divisibilities give that $X_1^m-1=(a-X_1^r)^m-1=0$. Thus, $X_1$ and $a-X_1^r$ are $m$-th roots
	of unity in $\overline{\mathbb{F}_q}$ such that $X_1+(a-X_1^r)^{r'}\neq a$. 
	
	We are left to show that 
	$X_1^p+[(a-X_1^r)^{r'}]^p=a$ and $p\neq q$. Indeed, as $p\equiv r \pmod{m}$ and $rr'\equiv 1\pmod{m}$, we have 
	$X_1^p+[(a-X_1^r)^{r'}]^p=X_1^r+(a-X_1^r)^{r'r}=X_1^r+a-X_1^r=a$. If $q=p$, then as $X_1+(a-X_1^r)^{r'}-a\neq 0$, by Frobenius isomorphism we get
	that 
	$$0\neq (X_1+(a-X_1^r)^{r'}-a)^q =X_1^q+[(a-X^r)^{r'}]^q-a=X_1^p+[(a-X^r)^{r'}]^p-a=0,$$ a contradiction.
\end{proof}

\section{Existence of a polynomial-time algorithm under property $(\natural)$}\label{S6}

Let $M:\mathbb N\to (0,\infty)$ be a multiplication time function for $\mathbb Z$ (cf.\ \cite{GG}, Definition 8.26 and \cite{HvdH}, Section 1). Similarly, let $M_{\mod}:\mathbb N\to (0,\infty)$ be a multiplication time modulo function for $\mathbb Z$. So for $n\in\mathbb N$, two integers of bit length at most $n$ can be multiplied using at most $M(n)$ word operations and for each 
$N\in \mathbb N$ one multiplication in $\mathbb Z/N\mathbb Z$ can be performed using $M_{\mod}(N)$ word operations. 

We have identities $M(n)=O (n\log n)$ (see \cite{HvdH}, Theorem 1.1) and 
$$M_{\mod}(N)=O\bigl(M(\lfloor \log_2 N\rfloor +1)\bigr)$$
(see \cite{GG}, Corollary 9.9).

Let $b\in \llbracket2, N-1\rrbracket$. Using the Euclidean Algorithm one can find $\gcd(b, N)$. By Lamé Theorem (see \cite{L}) the number of division steps in the Euclidean Algorithm is bounded by 
$\frac{\log(\sqrt{5}N)}{\log \frac{1+\sqrt{5}}{2}}$. As division of two integers bounded by $N$ costs $O\bigl(M(\lfloor \log_2 N\rfloor +1)\bigr)$ (e.g., see \cite{GG}, Theorem 9.8),
the cost of finding $\gcd(b, N)$ is $O\bigl(M(\lfloor \log_2 N \rfloor +1) \log N\bigr)$. By the same argument, if $\gcd(b, N)=1$, then using the Extended Euclidean Algorithm we can find the inverse of $a$ modulo $N$ at the cost of $O\bigl(M(\lfloor \log_2 N \rfloor +1) \log N\bigr)$. 

We have the following estimate of costs in Division Algorithms in one variable modulo $N$.

\begin{Le}\label{L7}
Let $(c,d)\in\mathbb N^2$ with $c\ge d$. Let 
$$D_{c,d}:=d(c+1-d).$$ 
Let $f(X)\in (\mathbb{Z}/N\mathbb{Z})[X]$ be a polynomial of degree $c$. Let $g(X)\in (\mathbb{Z}/N\mathbb{Z})[X]$ be a monic polynomial of degree $d$. Let $\bigl(h(X), g_1(X)\bigr)\in (\mathbb{Z}/N\mathbb{Z})[X]^2$ be the unique pair with the properties that we have $f(X)=h(X)g(X)+g_1(X)$ and $\deg(g_1) < d$. Then the cost of finding $g_1(X)$ is bounded by
$$D_{c,d}\left[M_{\mod}(N)+O(\log_2 N)\right].$$
\end{Le}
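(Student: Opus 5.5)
We will analyze the schoolbook long division of $f$ by the monic polynomial $g$ over $\mathbb Z/N\mathbb Z$. Write $f(X)=\sum_{k=0}^{c}f_kX^k$ and $g(X)=X^d+\sum_{k=0}^{d-1}g_kX^k$. The algorithm runs through $c+1-d$ rounds, indexed by $k=c,c-1,\ldots,d$. In round $k$ we read the current coefficient $u$ of $X^k$ in the running remainder. No inversion is needed, because $g$ is monic, so the next quotient coefficient is $u$ itself. We then replace the running remainder by its difference with $uX^{k-d}g(X)$. This kills the coefficient of $X^k$ and changes only the $d$ coefficients of $X^{k-d},\ldots,X^{k-1}$, each by subtracting $u\,g_j$.

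The count per round is therefore exactly $d$ multiplications $u\cdot g_j$ in $\mathbb Z/N\mathbb Z$ and $d$ subtractions in $\mathbb Z/N\mathbb Z$. The leading term is never computed explicitly, since we know it becomes $0$. A multiplication costs $M_{\mod}(N)$ by definition. A subtraction of two residues in $\llbracket 0,N-1\rrbracket$ followed by a possible addition of $N$ costs $O(\log_2 N)$ word operations, because we are working with integers of bit length $\lfloor\log_2 N\rfloor+1$. So one round costs at most $d[M_{\mod}(N)+O(\log_2 N)]$.

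Summing over the $c+1-d$ rounds gives the bound $d(c+1-d)[M_{\mod}(N)+O(\log_2 N)]=D_{c,d}[M_{\mod}(N)+O(\log_2 N)]$. After the last round the running remainder has degree $<d$. By the uniqueness of division by a monic polynomial (the same argument as in the proof of Lemma~\ref{L2}, now over $\mathbb Z/N\mathbb Z$), this remainder equals $g_1(X)$. The quotient coefficients read off in the rounds give $h(X)$ at no additional cost.

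I expect no real obstacle here. The only points needing care are bookkeeping conventions: we must not charge for the leading coefficient update or for bookkeeping of indices, and the $O(\log_2 N)$ term must absorb each modular subtraction, including the conditional correction by $N$. If $c=d$ there is exactly one round, and the formula correctly gives $d$ operations of each type.
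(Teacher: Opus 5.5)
Your proposal is correct and matches the paper's proof: both repeatedly subtract (current leading coefficient)$\cdot X^{k-d}g(X)$, charging $d$ multiplications in $\mathbb Z/N\mathbb Z$ and $d$ subtractions per step over at most $c+1-d$ steps. The only cosmetic difference is that the paper jumps to the actual degree of each new remainder, whereas you sweep every index $k=c,\ldots,d$, which gives exactly $c+1-d$ rounds and avoids having to detect degree drops.
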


\begin{proof}
First we need to consider $f_1(X):=f(X)-a_0X^{c-d}g(X)$, where $a_0$ is the leading coefficient of $f(X)$. 
The cost of the multiplication $a_0X^{c-d}g(X)$ is bounded by $dM_{\mod}(N)$, and the cost of subtraction $f(X)-a_0X^{c-d}g(X)$ is
$dO\bigl(\log_2 N\bigr)$ (cf.\ the multi-precision package described in \cite{Coh}, Chapter 1, Page 3). Thus the cost of finding $f_1(X)$ is bounded by $d\left[M_{\mod}(N)+O(\log_2 N)\right]$.
If $\deg(f_1) < d$, then $g_1(X)=f_1(X)$. 

If $\deg(f_1)\ge d$, then let $f_2(X):=f_1(X)-a_1X^{\deg(f_1)-d}g(X)$, where $a_1$ is the leading coefficient of $f_1(X)$.
By the same argument the cost of finding $f_2(X)$ is bounded by $d\left[M_{\mod}(N)+O(\log_2 N)\right]$. If $\deg(f_2) < d$, then $g_1(X)=f_2(X)$. 

If $\deg(f_2)\ge d$, then let $f_3(X):=f_2(X)-a_2X^{\deg(f_2)-d}g(X)$, where $a_2$ is the leading coefficient of $f_2(X)$, and so on. 

As $c>\deg(f_1)>\deg(f_2)>\cdots $, this process terminates after at most $c+1-d$ steps and the lemma follows.
\end{proof}

\begin{Pro}[Dichotomic Euclidean Algorithm in one variable modulo $N$]\label{P8}
Let the pair $(c,d)\in\mathbb N^2$ be such that $c\ge d$. Let $g_{-1}(X)\in (\mathbb{Z}/N\mathbb{Z})[X]$ 
	be a polynomial of degree $c$ and let $g_0(X)\in (\mathbb{Z}/N\mathbb{Z})[X]$ be a monic polynomial of degree $d$. Let $\iota\in \llbracket1, d\rrbracket$ be the unique integer such that there exists a $\iota$-tuple $(g_1,\ldots,g_{\iota})\in (\mathbb{Z}/N\mathbb{Z})[X]^{\iota}$ with the property that the leading coefficients of $g_1,\ldots,g_{\iota-1}$ are units of $\mathbb Z/N\mathbb Z$, either $g_{\iota}=0$ or the leading coefficient of $g_{\iota}$ is not a unit of $\mathbb Z/N\mathbb Z$, and for each $i\in \llbracket1,\imath \rrbracket$, $g_i$ is the remainder of the division of $g_{i-2}(X)$ by $g_{i-1}(X)$ (i.e., there exists a unique $h_i\in (\mathbb{Z}/N\mathbb{Z})[X]$ such that $g_{i-2}(X)=g_{i-1}(X)h_i(X)+g_i(X)$ and $\deg(g_i)<\deg(g_{i-1})$). For $e\in\mathbb N$ with $e\ge d$ let 
	$$B_{e,d,\iota}:=d\left(e+\iota-\frac{d(\iota+1)}{2\iota}\right)-\frac{\iota (\iota-1)}{2}\in [d(e+1-d),de]\cap\mathbb Q.$$ Then the following properties hold. 
	
	\medskip
	{\bf (1)} If $g_{\iota}=0$, then $\bigl(g_{-1}(X), g_0(X)\bigr)=\bigl(g_{\iota-1}(X)\bigr)$ and the cost of finding $g_{\iota-1}$ is bounded by 
	$$\left(B_{c+\iota-1,d,\iota}-\frac{\iota (\iota-1)}{2}\right)M_{\mod}(N)+B_{c,d,\iota}O(\log_2 N)+O\bigl(2(\iota-1) M(\lfloor \log_2 N \rfloor +1) \log N\bigr).$$
		
		\smallskip
	{\bf (2)} If $g_{\iota}\neq 0$, then the cost of finding a proper divisor of $N$ is bounded by
	$$\left(B_{c+\iota-1,d,\iota}-\frac{\iota (\iota-1)}{2}\right)M_{\mod}(N)+B_{c,d,\iota}O(\log_2 N)+O\bigl((2\iota-1) M(\lfloor \log_2 N \rfloor +1) \log N\bigr).$$
\end{Pro}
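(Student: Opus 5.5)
The plan is to run the Euclidean algorithm step by step. Before each division I make the divisor monic, paying one inversion modulo $N$. When a leading coefficient fails to be invertible, the failed gcd computation exposes a proper divisor of $N$. Set $d_i:=\deg(g_i)$, so that $c=d_{-1}\ge d=d_0>d_1>\cdots>d_{\iota-1}$. This strict decrease forces $d_{\iota-1}\ge d-\iota+1$ and $d_{i}\le d-i$, which is why $\iota\le d$.

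\textbf{Step $i$ of the iteration.} For $i\in\llbracket1,\iota\rrbracket$ the divisor $g_{i-1}$ has unit leading coefficient $u_{i-1}$; for $i=1$ it is monic and $u_0=1$ costs nothing. For $i\ge 2$ I would proceed as follows.
\begin{itemize}
\item Compute $u_{i-1}^{-1}$ by the Extended Euclidean Algorithm at cost $O\bigl(M(\lfloor\log_2 N\rfloor+1)\log N\bigr)$.
\item Multiply $g_{i-1}$ by $u_{i-1}^{-1}$ at cost $d_{i-1}M_{\mod}(N)$. Multiplying the constant $u_{i-1}$ itself is unnecessary, so this step saves the terms that accumulate into $-\frac{\iota(\iota-1)}{2}$.
\item Apply Lemma \ref{L7} to $g_{i-2}$ and the now monic $u_{i-1}^{-1}g_{i-1}$. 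This gives the remainder $g_i$ at cost $D_{d_{i-2},d_{i-1}}\bigl[M_{\mod}(N)+O(\log_2 N)\bigr]$.
\end{itemize}
Rescaling by a unit does not change the remainder, so $g_i$ agrees with the statement's sequence. Testing whether the leading coefficient of $g_i$ is a unit is a gcd with $N$, costing $O(M\log N)$. This accounts for the $O(\cdot M\log N)$ terms: up to $\iota-1$ inversions plus up to $\iota-1$ unit tests in case (1), and one extra gcd in case (2), which returns $\gcd(\text{lead}(g_\iota),N)$.

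\textbf{Case (2).} Suppose $g_\iota\neq0$ and its leading coefficient $b$ is a non-unit. If $b$ were $0$, $g_\iota$ would have a lower degree and a different leading coefficient, so $b$ is taken to be the true leading coefficient, with $b\not\equiv 0 \pmod N$. Since $b$ is also not a unit, $\gcd(b,N)$ is a proper divisor of $N$.

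\textbf{Case (1).} Suppose $g_\iota=0$. At each step $(g_{i-2},g_{i-1})=(g_{i-1},g_i)$ as ideals, because $g_{i-2}-h_ig_{i-1}=g_i$. Inducting up to $g_\iota=0$ gives $(g_{-1},g_0)=(g_{\iota-1})$.

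\textbf{Cost bound.} Summing the costs, the remaining work is the combinatorial estimate
$$\sum_{i=1}^{\iota}D_{d_{i-2},d_{i-1}}+\sum_{i=2}^{\iota}d_{i-1}\le B_{c+\iota-1,d,\iota}-\tfrac{\iota(\iota-1)}{2},$$
together with $\sum_i D_{d_{i-2},d_{i-1}}\le B_{c,d,\iota}$ for the $O(\log_2N)$ part. Write $D_{d_{i-2},d_{i-1}}=d_{i-1}(d_{i-2}-d_{i-1}+1)$. The sum then telescopes partly, and it is a concave-type expression in the $d_i$, to be maximized subject to $d_0=d$, strict decrease, and $\iota$ steps. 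I would bound it using $d_{i-1}\le d$ and $\sum_{i\ge2}(d_{i-2}-d_{i-1})\le d$. The mean $\frac{d(\iota+1)}{2\iota}$ that appears in $B$ suggests the extremal configuration has evenly spaced degrees, or uses a Cauchy–Schwarz/averaging step over $\sum d_{i-1}$. Matching the exact constant $B$, including the checked range $[d(e+1-d),de]$, is the step I expect to be the main obstacle. I would first verify it for $\iota=1$ and $\iota=2$, then induct on $\iota$.
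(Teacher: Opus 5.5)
Your setup is the same as the paper's. You make each intermediate divisor monic and apply Lemma \ref{L7} for every remainder. You pay one inversion and one unit test for each of $g_1,\ldots,g_{\iota-1}$, plus one gcd in case (2). This correctly reduces both cost claims to two inequalities:
$$A:=\sum_{i=1}^{\iota}D_{d_{i-2},d_{i-1}}\le B_{c,d,\iota},\qquad A+\sum_{i=1}^{\iota-1}d_i\le B_{c+\iota-1,d,\iota}-\frac{\iota(\iota-1)}{2}.$$
But this estimate is the only nontrivial content of the cost statements, and you leave it unproved. The tools you list do not produce it. Using only $d_{i-1}\le d$ and $\sum(d_{i-2}-d_{i-1})\le d$ bounds the quadratic part by about $d^2$, not by $\frac{d^2}{2}\bigl(1-\frac1\iota\bigr)$. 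Hunting for an extremal degree sequence, or for a sharp induction on $\iota$, also aims at the wrong target. $B_{c,d,\iota}$ is the sum of two separately optimized bounds. Their equality cases (all degree drops equal to $1$, respectively all gaps $b_k$ below equal) are incompatible unless $\iota\in\{1,d\}$, so in general the constant is not the maximum of your sum. You do mention Cauchy--Schwarz, but it has to be applied to the vector of consecutive gaps, after rewriting the quadratic part as a symmetric sum, not to $\sum d_{i-1}$.

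The missing step is the decomposition
$$A=d(c+1-d)+\sum_{i=1}^{\iota-1}d_i+\sum_{i=1}^{\iota-1}(d_{i-1}-d_i)d_i .$$

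\textbf{The linear sum.} Since $d_i\le d-i$, it is at most $\sum_{i=1}^{\iota-1}(d-i)=d(\iota-1)-\frac{\iota(\iota-1)}{2}$. This is where both occurrences of $-\frac{\iota(\iota-1)}{2}$ come from, not from skipping the product with the leading coefficient. Your count $d_{i-1}M_{\mod}(N)$ already reflects that saving, which would account for only $\iota-1$ multiplications anyway.

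\textbf{The quadratic sum.} Put $b_1:=d_{\iota-1}$ and $b_{k+1}:=d_{\iota-k-1}-d_{\iota-k}$ for $k\in\llbracket1,\iota-1\rrbracket$. Then $\sum_{k=1}^{\iota}b_k=d$ and $d_i=b_1+\cdots+b_{\iota-i}$. So the quadratic sum equals $\sum_{k<l}b_kb_l=\frac12\bigl(d^2-\sum_k b_k^2\bigr)$, which is at most $\frac{d^2}{2}\bigl(1-\frac1\iota\bigr)$ by Cauchy--Schwarz.

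\textbf{Conclusion.} Adding the pieces gives $A\le B_{c,d,\iota}$. Adding the linear bound once more gives $B_{c,d,\iota}+d(\iota-1)-\frac{\iota(\iota-1)}{2}=B_{c+\iota-1,d,\iota}-\frac{\iota(\iota-1)}{2}$.

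Separately, your inequality $d_{\iota-1}\ge d-\iota+1$ is reversed. Strict decrease gives $d_{\iota-1}\le d-\iota+1$, and together with $d_{\iota-1}\ge 0$ this yields only $\iota\le d+1$. The value $\iota=d+1$ actually occurs when $g_d$ is a unit constant; the estimates above still hold in that case.
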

\begin{proof} As $\deg(g_1)<d$ and $g_{-1}(X)=g_0(X)h_1(X)+g_1(X)$, by Lemma \ref{L7} the cost of finding $g_1(X)$ is bounded by 
	$D_{c,d}\left[M_{\mod}(N)+O(\log_2 N)\right]$. 
	
	If $g_1(X)=0$, then $\bigl(g_{-1}(X), g_0(X)\bigr)=\bigl(g_0(X)\bigr)$.
	
	Assume that $g_1(X)\neq 0$. Let $a_1\in \llbracket1, N-1\rrbracket$ be such that $a_1+N\mathbb{Z}$ is the leading coefficient of $g_1(X)$. If $\gcd(a_1, N)>1$, then $\gcd(a_1, N)$ 
	is a proper divisor of $N$; the cost of finding $\gcd(a_1, N)$ is $O\bigl(M(\lfloor \log_2 N \rfloor +1) \log N\bigr)$.
	If $\gcd(a_1, N)=1$, then $a_1+N\mathbb{Z}$ is invertible in $\mathbb{Z}/N\mathbb{Z}$ and we can find its inverse $(a_1+N\mathbb{Z})^{-1}$ 
	at the cost $O\bigl(M(\lfloor \log_2 N \rfloor +1) \log N\bigr)$. 
	Then we calculate $(a_1+N\mathbb{Z})^{-1}g_1(X)$ to get a monic polynomial at the cost bounded by $\deg(g_1)M_{\mod}(N)$. 
	As $\deg(g_2)<\deg(g_1)$ and $g_0(X)=g_1(X)h_2(X)+g_2(X)$, by Lemma \ref{L7} the cost of finding $g_2(X)$, after having the monic polynomial 
	$(a_1+N\mathbb{Z})^{-1}g_1(X)$, is bounded by $D_{d,\deg(g_1)}\left[M_{\mod}(N)+O(\log_2 N)\right]$.
	
	If the leading coefficient of $g_2(X)$ is invertible, then we can continue the process.
	
	In this way we get uniquely the sequence of polynomials $g_1(X), g_2(X), \ldots, g_{\imath}(X)$ in $(\mathbb{Z}/N\mathbb{Z})[X]$.
	
	If $g_{\iota}(X)=0$, then by induction on $i\in \llbracket0, \iota -1\rrbracket$ we get that 
	$$\bigl(g_0(X), g_1(X)\bigr)=\bigl(g_i(X),g_{i+1}(X)\bigr)$$ 
	and hence $\bigl(g_{-1}(X), g_0(X)\bigr)=\bigl(g_{\iota-1}(X)\bigr)$. 
	If $g_{\iota}(X)\neq 0$, then if $a_{\iota}\in\mathbb \llbracket1, N-1\rrbracket$ is such that $a_{\iota}+N\mathbb{Z}$ is the leading coefficient of $g_{\iota}(X)$, then $\gcd(a_{\iota}, N)$ is a proper divisor of $N$.
	
	Thus, by denoting
	$$
	E_{N,g_{-1},g_0}:=\overset{\iota-1}{\underset{i=0}{\sum}}D_{\deg(g_{i-1}),\deg(g_i)}\left[M_{\mod}(N)+O(\log_2 N)\right] +
	M_{\mod}(N)\overset{\iota-1}{\underset{i=1}{\sum}}\deg(g_i),$$
	if $g_{\iota}=0$ then the total cost is bounded by 
	$E_{N,g_{-1},g_0}+O\bigl(2(\iota-1)M(\lfloor \log_2 N \rfloor +1) \log N\bigr)$, and if $g_{\iota}\neq 0$ then the total cost is bounded by $E_{N,g_{-1},g_0}+O\bigl((2\iota-1)M(\lfloor \log_2 N \rfloor +1) \log N\bigr)$. As for $\iota=1$ we have $E_{N,g_{-1},g_0}=D_{c,d}\left[M_{\mod}(N)+O(\log_2 N)\right]$, parts (1) and (2) hold for $\iota=1$.
	
	We are left to prove parts (1) and (2) for $\iota\in \llbracket2, d\rrbracket$. Denoting 
	$$A_{g_{-1},g_0}:=\overset{\iota-1}{\underset{i=0}{\sum}}D_{\deg(g_{i-1}),\deg(g_i)}=\overset{\iota-1}{\underset{i=0}{\sum}}[\deg(g_{i-1})+1-\deg(g_i)]\deg(g_i),$$
	$$B_{g_{-1},g_0}:=\overset{\iota-1}{\underset{i=1}{\sum}}\deg(g_i),$$
	$$C_{g_{-1},g_0}:=A_{g_{-1},g_0}+B_{g_{-1},g_0},$$
we have an identity
\begin{equation}\label{EQ12}
	E_{N,g_{-1},g_0}=A_{g_{-1},g_0}O(\log_2 N)+C_{g_{-1},g_0}M_{\mod}(N).
\end{equation}	
Denoting $$D_{g_{-1},g_0}:=\overset{\iota-1}{\underset{i=1}{\sum}}[\deg(g_{i-1})-\deg(g_i)]\deg(g_i),$$
we have an identity 
\begin{equation}\label{EQ13}
	A_{g_{-1},g_0}=
	d(c+1-d)+B_{g_{-1},g_0}+D_{g_{-1},g_0}.
\end{equation}
So to estimate $A_{g_{-1},g_0}$ and $C_{g_{-1},g_0}$ it suffices to estimate $B_{g_{-1},g_0}$ and $D_{g_{-1},g_0}$.

As $\deg(g_i)\le d-i$ for each $i\in \llbracket1, \iota\rrbracket$, we estimate
\begin{equation}\label{EQ14}
	 B_{g_{-1},g_0}\leq \overset{\iota-1}{\underset{i=1}{\sum}}(d-i)
	=d(\iota-1) - \frac{\iota (\iota-1)}{2}.
\end{equation}

Defining $b_1:=\deg(g_{\iota-1})\in\mathbb N\cup\{0\}$ and $b_{i+1}:=\deg(g_{\iota-i-1})-\deg(g_{\iota-i})\in\mathbb N$ for $i\in \llbracket1, \iota -1\rrbracket$, we have an identity $\overset{\iota}{\underset{i=1}{\sum}} b_i=\deg(g_0)=d$ and we estimate
\begin{equation}\label{EQ15}
D_{g_{-1},g_0}=\overset{\iota-1}{\underset{i=1}{\sum}} b_{i+1}(\sum_{j=1}^i b_j)=\frac{(\overset{\iota}{\underset{i=1}{\sum}} b_i)^2-\overset{\iota}{\underset{i=1}{\sum}} b_i^2}{2}=\frac{d^2-\overset{\iota}{\underset{i=1}{\sum}} b_i^2}{2}\le \frac{d^2(1-\frac{1}{\iota})}{2}
\end{equation}
by Cauchy--Schwartz' inequality. 

Equation (\ref{EQ13}) and Inequalities (\ref{EQ14}) and (\ref{EQ15}) give that
\begin{equation}\label{EQ16}
A_{g_{-1},g_0}\le d(c+1-d)+d(\iota-1) - \frac{\iota (\iota-1)}{2}+\frac{d^2(1-\frac{1}{\iota})}{2}=d\left[c+\iota-\frac{d(\iota+1)}{2\iota}\right]-\frac{\iota (\iota-1)}{2}=B_{c,d,\iota}.
\end{equation}
From Inequalities (\ref{EQ14}) and (\ref{EQ16}) we get that
\begin{equation}\label{EQ17}
	C_{g_{-1},g_0}=A_{g_{-1},g_0}+B_{g_{-1},g_0}\le B_{c,d,\iota}+d(\iota-1) - \frac{\iota (\iota-1)}{2}=B_{c+\iota-1,d,\iota}-\frac{\iota (\iota-1)}{2}.
\end{equation}
Equation (\ref{EQ12}) and Inequalities (\ref{EQ16}) and (\ref{EQ17}) give that parts (1) and (2) hold for $\iota\in \llbracket2, d\rrbracket$.\footnote{As $B_{g_{-1},g_0}=\overset{\iota-1}{\underset{i=1}{\sum}} b_i(\iota-i)$, the expression $B_{g_{-1},g_0}+D_{g_{-1},g_0}-\frac{d^2}{2}$ (resp.\ $2B_{g_{-1},g_0}+D_{g_{-1},g_0}-\frac{d^2}{2}$) is equal to $\overset{\iota-1}{\underset{i=1}{\sum}} b_i(\iota-i)-\frac{1}{2}\overset{\iota}{\underset{i=1}{\sum}} b_i^2$ (resp.\ $\overset{\iota-1}{\underset{i=1}{\sum}} 2b_i(\iota-i)-\frac{1}{2}\overset{\iota}{\underset{i=1}{\sum}} b_i^2$) which, though difficult to estimate in a compact form, can be used to refine Inequality (\ref{EQ16}) (resp.\ Inequality (\ref{EQ17})) in many cases.}
\end{proof}

\begin{De}\label{D3}
Let $(N,m,a)\in\mathbb N^2\times \mathbb{Z}$. We say that $N$ has property $(\sharp_{m; a})$ if there exists no monic polynomial in $(\mathbb Z/N\mathbb Z)[X]$ which generates the ideal $\bigl(\Psi_m(X)+(N),\Psi_m(a-X)+(N)\bigr)$ of $\mathbb (\mathbb Z/N\mathbb Z)[X]$. 
\end{De}

Note that for $(N,m, a)\in\mathbb N^2\times \mathbb{Z}$, $N$ has property $(\sharp_{m;a})$ iff in Proposition \ref{P8} for the reductions $g_{-1}(X)$ and $g_0(X)$ of $\Psi_m(a-X)$ and $\Psi_m(X)$ (respectively) modulo $N$, we have $g_{\iota}\neq 0$. Also, for pairs $(N,m)\in\mathbb N^2$ and $(a,b)\in\mathbb Z^2$ with $a\equiv b\pmod{N}$, $N$ has property $(\sharp_{m;a})$ iff $N$ has property $(\sharp_{m;b})$.

\begin{Ex}\normalfont\label{EX4}
Let $N\in\mathbb T$ be square free and let $m\in\mathbb N$. Then $N$ has property $(\sharp_{m;a})$ iff the polynomial $\theta_{m,N,1;a}$, defined in Lemma \ref{L4}, is not monic and hence by Corollary \ref{C1}, iff there exists a pair $(p,q)$ of prime divisors of $N$ such that $t_{m,p;a}<t_{m,q;a}$. In particular, if there exists a pair $(p,q)$ of prime divisors of $N$ such $p\mid\Delta_{m;a}$ and $q\nmid\Delta_{m;a}$, then $t_{m,p;a}=0<t_{m,q;a}$ by Corollary \ref{C2}(1) and therefore $N$ has property $(\sharp_{m;a})$.
\end{Ex}

Let $N\in\mathbb N\setminus\{1\}$. For $s\in \llbracket1, N-1\rrbracket$ with $\gcd(N,s)=1$ we consider the set of orders
$$O_{N,s}:=\{o_p(s)|p\,\textup{is a prime divisor of}\; N\}\subset\mathbb N.$$
For $a\in\mathbb Z\setminus\{0,2\}$, let $b:=6$ if $a=1$, let $b:=3$ if $a=-1$, let $b:=2$ if $a=-2$, and let $b:=0$ if $|a|\ge 3$; we consider the set of minimal elements 
$$M_{N;a}:=\{m_{p;a}|p\,\textup{is a prime divisor of}\; N\}\subset (\mathbb N\cup \{0\})\setminus b\mathbb N.$$

\begin{Pro}\label{P8.1}
Let $N\in \mathbb N\setminus\{1\}$. Then the following properties hold.

\medskip
{\bf (1)} Let $a\in\mathbb Z$ be such that $\gcd(N,a)=1$. Assume $N=2l-1$ is odd and $O_{N,al}$ has at least two elements. Then $N$ has property $(\sharp_{o;a})$ for each $o\in O_{N,al}\setminus\{\max(O_{N,al})\}$.

\smallskip
{\bf (2)} Let $a\in\mathbb Z\setminus\{0,2\}$. Assume $M_{N;a}$ has at least two elements. Let $m_0:=\min(M_{N;a})$. If $m_0>0$ then $N$ has property $(\sharp_{m_0;a})$, and if $m_0=0$ then $N$ has property $(\sharp_{m;a})$ for each $m\in M_{N;a}\cap\mathbb N$.

\smallskip
{\bf (3)} Let $m\in\mathbb N\setminus\{1\}$. Assume $N$ is odd and all its prime divisors are greater than $m$. Let $r\in \llbracket2, m-1\rrbracket$ be relatively prime to $m$. Let 
$s\in \llbracket1, N-1\rrbracket$ be such that $N\mid r-2s$. If $O_{N,s}$ has at least two elements, then $N$ has property $(\sharp_{o;r})$ for each $o\in O_{N,s}\setminus\{\max(O_{N,s})\}$.
\end{Pro}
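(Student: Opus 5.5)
The plan is to reduce all three parts to one criterion: \emph{if there are prime divisors $p,q$ of $N$ with $t_{m,p;a}\neq t_{m,q;a}$, then $N$ has property $(\sharp_{m;a})$.} Since $N$ is not assumed square free, Example \ref{EX4} cannot be quoted directly, so I would first prove this criterion. Suppose a monic $g\in(\mathbb Z/N\mathbb Z)[X]$ generates $\bigl(\Psi_m(X)+(N),\Psi_m(a-X)+(N)\bigr)$. For each prime $p\mid N$, reducing modulo $p$ shows that the reduction of $g$ is a monic generator of $\bigl(\Psi_m(X),\Psi_m(a-X)\bigr)$ in $\mathbb F_p[X]$. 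By uniqueness of monic generators (Lemma \ref{L4}), this reduction equals $\theta_{m,p,1;a}$. Hence $t_{m,p;a}=\deg g$ is the same for every $p\mid N$. Taking the contrapositive gives the criterion. In each part I will then exhibit two primes whose $t$'s differ.

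For part (1), note that $2l\equiv 1\pmod N$, so modulo any prime $p\mid N$ the integer $al$ reduces to $a/2$. The hypothesis $\gcd(N,a)=1$ makes $a/2$ a unit in $\mathbb F_p$, so $o_p(al)=o(a/2)$. Let $o=o_p(al)<\max(O_{N,al})$ and pick $q\mid N$ with $o_q(al)=\max(O_{N,al})$. Then $(a/2)^o=1$ in $\mathbb F_p$, i.e.\ $p\mid 2^o-a^o$, since $p$ is odd. By Corollary \ref{C2}(4), $t_{o,p;a}$ is odd. On the other hand, $o_q(a/2)>o$, so it cannot divide $o$; thus $q\nmid 2^o-a^o$, and Corollary \ref{C2}(4) again makes $t_{o,q;a}$ even. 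They differ, and the criterion applies.

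Part (3) is the same argument with $a$ replaced by $r$. From $N\mid r-2s$ we get $s\equiv r/2$ modulo each $p\mid N$. Since every prime divisor of $N$ exceeds $m>r$, we have $\gcd(N,r)=1$, and $N$ is odd. So $o_p(s)=o(r/2)$, and the parity argument via Corollary \ref{C2}(4) goes through verbatim.

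For part (2), I would use Remark \ref{R2}(5): $t_{m,p;a}>0$ iff $m\in MO_{p;a}$. Suppose first $m_0>0$. Write $m_0=m_{p;a}$ for some $p$, so $t_{m_0,p;a}>0$. Since $M_{N;a}$ has at least two elements, there is $q$ with $m_{q;a}\neq m_0$. If $m_0\in MO_{q;a}$, then, because $m_0\notin b\mathbb N$, the definition of $m_{q;a}$ as a minimum gives $0<m_{q;a}\le m_0$. Minimality of $m_0$ then forces $m_{q;a}=m_0$, a contradiction. Hence $t_{m_0,q;a}=0$. Suppose next $m_0=0$. Then some $q$ has $m_{q;a}=0$, i.e.\ $MO_{q;a}\subset b\mathbb N$. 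For $m=m_{p;a}\in M_{N;a}\cap\mathbb N$ we have $m\notin b\mathbb N$, so $t_{m,q;a}=0<t_{m,p;a}$. (When $|a|\ge3$, $m_0=0$ cannot occur, because $MO_{p;a}$ is nonempty.)

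The main delicate point is the first step, the criterion for non-square-free $N$, together with the bookkeeping of $b\mathbb N$ in part (2). The parity trick in parts (1) and (3) is straightforward once Corollary \ref{C2}(4) is invoked.
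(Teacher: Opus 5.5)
Your proposal is correct and takes essentially the same route as the paper. In both, one exhibits two prime divisors of $N$ with different values of $t$: via the parity criterion of Corollary \ref{C2}(4) in parts (1) and (3), and via Remark \ref{R2}(5) together with the minimality bookkeeping for $b\mathbb N$ in part (2).

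The only difference is in how non-square-free $N$ is handled. You prove the criterion directly, by reducing a putative monic generator modulo each prime divisor of $N$. The paper instead passes to the square-free radical $\bar N$, applies Corollary \ref{C1}, and leaves the transfer from $\bar N$ back to $N$ implicit; your reduction argument supplies exactly that step.
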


\begin{proof}
For part (1), we consider prime divisors $p$ and $q$ of $N$ such that $o_p(al)=o<o_q(al)$. So $p\mid 1-(al)^o$ but $q\nmid 1-(al)^o$. As $2l\equiv 1\pmod{N}$, multiplying by $2^o$ we get that $p\mid 2^o-a^o$ but $q\nmid 2^o-a^o$. Thus $t_{o,p;a}$ is odd and $t_{o,q;a}$ is even by Corollary \ref{C2}(4). Hence $t_{o,p;a}\neq t_{o,q;a}$. Thus, if $\bar N\in\mathbb N\setminus\{1\}$ is the largest square free natural number that divides $N$, then $\theta_{m,\bar N,1;a}$ is not monic by Corollary \ref{C1}. Thus $\bar N$ and $N$ have property $(\sharp_{o;a})$ and part (1) holds.

For part (2), let $b\in\{0,2,3,6\}$ be as above and we consider prime divisors $p$ and $q$ of $N$ such that $m_{p;a}=m_0<m_{q;a}$. Thus $m_{q;a}\in\mathbb N\setminus b\mathbb N$. If $m_0=0$, then $b>0$ and $MO_{p;a}\subset b\mathbb N$ and hence $m_{q;a}\in MO_{q;a}\setminus MO_{p;a}$; from Remark \ref{R2}(5) we get that $t_{m_{q;a},p;a}=0$ and $t_{m_{q;a},q;a}\in\mathbb N$ and thus $t_{m_{q;a},p;a}\neq t_{m_{q;a},q;a}$ and as in the prior paragraph we argue that $N$ has property $(\sharp_{m_{q;a};a})$. If $m_0>0$, then $m_0\in \mathbb N\setminus b\mathbb N$ and hence, as $m_0<m_{q;a}$, we get that $m_0\in MO_{p;a}\setminus MO_{q;a}$; so $t_{m_0,p;a}>0=t_{m_0,q;a}$ by Remark \ref{R2}(5) and thus $t_{m_0,p;a}\neq t_{m_0,q;a}$ and as in the prior paragraph we argue that $N$ has property $(\sharp_{m_0;a})$. So part (2) holds.

Part (3) is proved entirely similar to part (1).
\end{proof}

For a prime $p$, an integer $d\in\Div(p-1)$, let $g_d(p)\in\llbracket1,p-1\rrbracket$ be the smallest such $o_p\bigl(g_d(p)\bigr)=d$. Thus $g(p):=g_{p-1}(p)\in\llbracket1,p-1\rrbracket$ is the smallest primitive root modulo $p$. 

\begin{Co}\label{C4.5} Let $p$ and $q$ be two odd primes with $p<q$. Let $N:=pq$. We consider the set 
$$\S_{p,q}:=\{(o,s)\in \bigl(\Div(p-1)\cup\Div(q-1)\bigr)\times \llbracket1,N\rrbracket|[s]_N\in U_N,o_p(s)\neq o_q(s),o=\min\bigl(o_p(s),o_q(s)\bigr)\}$$
and the natural number 
$$\mu_{p,q}:=\min(\{o+2s|(o,s)\in\S_{p,q}\}).$$
Then the following properties hold.

\medskip
{\bf (1)} For each pair $(o,s)\in\S_{p,q}$, $N$ has property $(\sharp_{o;2s})$.

{\bf (2)} We have an inclusion of sets 
$$\{\bigl(1,p+1\bigr),\bigl(2,p-1\bigr)\}\subset\S_{p,q}.$$ 

\smallskip
{\bf (3)} Let $d\in\Div(p-1)\triangle\Div(q-1)$ (e.q., $d$ could be $q-1$). If $d$ divides $p-1$ (resp.\ $q-1$ and $p\nmid g_d(q)$), then $\bigl(\min(d,o_q(g_d(p))),g_d(p)\bigr)\in\S_{p,q}$ (resp.\ $\bigl(\min(d,o_p(g_d(q)),g_d(q)\bigr)\in\S_{p,q}$).

\smallskip
{\bf (4)} We have inequalities $\mu_{p,q}\le 2p\le 2(\sqrt{N+1}-1)$.

\smallskip
{\bf (5)} If $p\nmid g(q)$, then $\mu_{p,q}\le o_p\bigl(g(q)\bigr)+2g(q)\le p-1+2g(q)<p-1+2q^{\frac{5}{8}}$.
\end{Co}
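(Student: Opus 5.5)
Part (1) reduces to Proposition \ref{P8.1}(3). Fix $(o,s)\in\S_{p,q}$ and set $a:=2s$. Since $N=pq$ is odd and $\gcd(s,N)=1$, we have $N\mid a-2s$ trivially, and $O_{N,s}=\{o_p(s),o_q(s)\}$ has two elements with $o=\min(O_{N,s})$. The hypotheses $r\in\llbracket2,m-1\rrbracket$ and $\gcd(r,m)=1$ in Proposition \ref{P8.1}(3) are not used in its proof. So it is cleaner to redo the argument of Proposition \ref{P8.1}(1) directly. Say $o_p(s)=o<o_q(s)$. Then $p\mid s^o-1$ and $q\nmid s^o-1$. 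Multiplying by $2^o$, $p\mid (2s)^o-2^o$ and $q\nmid (2s)^o-2^o$. By Corollary \ref{C2}(4), $t_{o,p;2s}$ is odd and $t_{o,q;2s}$ is even. Hence $\theta_{o,N,1;2s}$ is not monic by Corollary \ref{C1}, and Example \ref{EX4} gives property $(\sharp_{o;2s})$.

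For part (2), I would compute orders directly. Take $s=p+1\le N$, a unit modulo $N$ since $p<q$ and $q\nmid p+1$ (as $q\ge p+2$ when $q$ is odd). We have $s\equiv1\pmod p$, so $o_p(s)=1$, while $o_q(s)\neq1$. Hence the pair is $(1,p+1)$, and $1$ divides $p-1$. Take $s=p-1$. Then $o_p(s)=2$ since $p\ge3$. For $o_q(p-1)$, note that $p-1\not\equiv\pm1\pmod q$ unless $p=2$ or $p-1=1$, which are both excluded, so $o_q(s)\ge3\ne2$. The pair is $(2,p-1)$.

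For part (3), with $d\mid p-1$ and $d\nmid q-1$, set $s=g_d(p)<p<q$, a unit modulo $N$. Then $o_p(s)=d$, while $o_q(s)\mid q-1$, so $o_q(s)\ne d$. The minimum is as stated. The other case is symmetric: the assumption $p\nmid g_d(q)$ ensures $s$ is a unit, and if $g_d(q)\ge N$ one uses that $g_d(q)<q\le N$.

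For part (4), $(1,p+1)$ gives $\mu_{p,q}\le 1+2(p+1)$. This is too weak, so instead use $(2,p-1)$, which gives $2+2(p-1)=2p$. Also $p\le q-2$, so $N=pq\ge p(p+2)$, which gives $(p+1)^2\le N+1$ and hence $2p\le2(\sqrt{N+1}-1)$.

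For part (5), apply part (3) with $d=q-1$, which divides $q-1$ but not $p-1$, and $g_d(q)=g(q)$. This gives the pair $(\min(q-1,o_p(g(q))),g(q))$, and $\mu\le o_p(g(q))+2g(q)\le p-1+2g(q)$. The final strict bound $g(q)<q^{5/8}$ needs an external bound on least primitive roots, namely Burgess's bound $g(q)\ll q^{1/4+\epsilon}$ or an explicit version valid for all $q$. I expect this to be the main obstacle, since an explicit unconditional constant-free inequality $g(q)<q^{5/8}$ must be cited, for instance from known explicit results on least primitive roots.
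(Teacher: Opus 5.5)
Your proposal is correct and follows the paper's proof essentially step for step. For part (1), the paper applies Proposition \ref{P8.1}(1) with $a=2s$ and $N=2l-1$ (so $al=2sl\equiv s\pmod{N}$), which is exactly the parity argument via Corollary \ref{C2}(4), Corollary \ref{C1} and Example \ref{EX4} that you inline. Parts (2)--(4) match yours, with the paper leaving the arithmetic of (4) implicit. For part (5), the paper likewise just cites the explicit bound $g(q)<q^{5/8}$ for $q\ge 5$ from \cite{MS}, Theorem 1.3. This applies since $q>p\ge 3$, and the condition matters because the bound fails at $q=3$.
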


\begin{proof}
Part (1) follows from Proposition \ref{P8.1}(1) applied to $a:=2s$: if $l\in\mathbb N$ is such that $N=2l-1$, then $al=2sl\equiv s\pmod{N}$.

As $o_p(p+1)=1$ but $o_q(p+1)\ge 2$ because $q\ge p+2$ and $o_p(p-1)=2$ but $o_q(p-1)>2$ because $q\nmid (p-1)^2-1=p(p-2)$, part (2) holds.

For part (3), if $d$ divides $p-1$ (resp.\ $q-1$), then $o_p\bigl(g_d(p)\bigr)=d$ while $o_q\bigl(g_d(p)\bigr)\neq d$ as $d\notin\Div(q-1)$ (resp.\ $o_p\bigl(g_d(q)\bigr)\neq d$ as $d\notin\Div(p-1)$ while $o_q\bigl(g_d(q)\bigr)=d$), so part (3) holds.

Part (4) follows from part (2).

As $q\ge 5$, we have $g(q)<q^{\frac{5}{8}}$ by \cite{MS}, Theorem 1.3. Based on this, part (5) follows from part (3) applied to $d=q-1\in\Div(q-1)\setminus\Div(p-1)$ and $g_d(q)=g(q)$.
\end{proof}

\begin{Pro}\label{P9}
Let $(N,m,a)\in\mathbb T\times\mathbb N\times \mathbb{Z}$ be such that $\gcd(m, N)=1$ and $N$ does not have properties $(\sharp_{m;a})$ and $(\flat_{m;a})$. Let $p$ be a prime divisor of $N$ and $(r, r')\in \llbracket1, m-1\rrbracket^2$ be such that $p\equiv r \pmod{m}$ and $m\mid rr'-1$. Then the following properties hold.

\medskip
{\bf (1)} There exists a monic polynomial $\Theta_{m, r, N;a}(X)\in \mathbb{Z}[X]$ such that we have an identity 
$\bigl(\Theta_{m, r, N;a}(X), N\bigr)=\bigl( \Psi_m(X), \Psi_m(a-X^r), N\bigr)$ between ideals of $\mathbb Z[X]$. 

\smallskip
{\bf (2)} If $N$ is square free, then we have $(X+Y)^N-a^N\in I_{m, r, N;a}$.

\smallskip
{\bf (3)} If $N=pq$ is a semiprime and $q\mid a^p-a$ (e.g., this holds if either $a\in\{0,1\}$ or $a=-1$ and $p$ is odd), then there exist integers 
$c_{i,j}\in q\mathbb Z \cap \llbracket1, N-1\rrbracket$ 
indexed by $(i,j)\in\mathbb I_{m,r}$ such that 
$$\bigl(X + (a-X^r)^{r'}\bigr)^N - X^N - (a-X^r)^{r'N} \equiv \sum_{(i,j)\in\mathbb I_{m,r}} c_{i,j} X^i (a-X^r)^{r'j} \pmod{(\Theta_{m, r, N;a}, N)}.$$

\smallskip
{\bf (4)} Let $d:= \deg(\Theta_{m, r, N;a})$. If $N=pq$ is a semiprime and $q\mid a^p-a$, then there exists a unique $d$-tuple 
$(c_0,\ldots,c_{d-1})\in\{0,q,2q,\ldots,N-q\}^{d}$ such that 
$$(X + (a-X^r)^{r'})^N - X^N - (a-X^r)^{r'N} \equiv \overset{d-1}{\underset{i=0}{\sum}}c_{i}X^i \pmod{(\Theta_{m, r, N;a}, N)}.$$
\end{Pro}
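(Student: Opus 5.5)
Part (1) comes straight from the failure of $(\sharp)$ together with Lemma \ref{L6}. Since $N$ does not have property $(\sharp_{m;a})$, Definition \ref{D3} gives a monic polynomial $\vartheta(X)\in(\mathbb Z/N\mathbb Z)[X]$ generating $\bigl(\Psi_m(X),\Psi_m(a-X)\bigr)$ modulo $N$. Because $\gcd(r,m)=1$, the substitution $X\mapsto X^r$ induces the automorphism used in Isomorphism (\ref{EQ11.1}). Under it, the ideal $\bigl(\Psi_m(X),\Psi_m(a-X),N\bigr)$ maps onto $\bigl(\Psi_m(X),\Psi_m(a-X^r),N\bigr)$, using $\Psi_m(X^r)\in(\Psi_m(X))$ and, for the reverse direction, $X^{rr'}\equiv X\pmod{\Psi_m(X)}$. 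So the image of $\vartheta(X^r)$ generates the new ideal modulo $N$. It need not be monic of small degree, but we can reduce it modulo $\Psi_m(X)$. To obtain a monic generator I would instead run the gcd directly: Lemma \ref{L4} and Corollary \ref{C1} show that, in the square-free case, monicness of $\theta_{m,N,1;a}$ is equivalent to monicness of $\theta_{m,N,r;a}$. For general $N$ I would argue via the isomorphism that the quotient ring is a free $\mathbb Z/N\mathbb Z$-module of rank $\deg\vartheta$, and then take $\Theta$ to be the characteristic polynomial of multiplication by $X$. This polynomial is monic and lies in the ideal by Cayley--Hamilton, and comparing ranks shows it generates the ideal. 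Finally, lift its coefficients to $\mathbb Z$.

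For (2), I would use the failure of $(\flat_{m;a})$. By Definition \ref{D1} this means $(X+Y-a)^N\in I_{m,r,N;a}$ for every relevant $r$. With $N$ square free, I would work prime by prime via the Chinese Remainder Theorem. Modulo each prime $\ell\mid N$ the ring $R_{m,r,\ell;a}$ is reduced, being étale as in the proof of Proposition \ref{P5} since $\ell\nmid m$. Therefore $X+Y-a$ is nilpotent there, hence zero, so $X+Y=a$ in $R_{m,r,N;a}$ and $(X+Y)^N=a^N$. The case $r=1$, excluded in Definition \ref{D1}, would need separate handling: there $X+Y-a\in I_{m,1;a}$ trivially. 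This case distinction is the one point to check carefully.

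For (3) and (4), I would follow the proof of Proposition \ref{P3}. Lemma \ref{L1} for $q$ gives $(X+Y)^N-(X^q+Y^q)^p\in q\mathbb Z[X,Y]$. Reducing modulo $J_{m,r;a}$ via Lemma \ref{L2}(2),(3) yields coefficients $c_{i,j}\in q\mathbb Z$, which we reduce into $\llbracket0,N-1\rrbracket$. Modulo $I_{m,r,N;a}$, Proposition \ref{P1} combined with $X^p+Y^p\equiv a$ gives
\[
(X+Y)^N\equiv a^q+(X^q+Y^q)^p-X^N-Y^N .
\]
Part (2) gives $(X+Y)^N\equiv a^N$, and $q\mid a^p-a$ lets us identify $a^N$ with $a^q$ modulo $q$. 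Modulo $p$ the Frobenius gives $a^N\equiv a^q$ directly, so $a^N\equiv a^q$ modulo $N$. Hence $(X^q+Y^q)^p-X^N-Y^N\in I_{m,r,N;a}$, and therefore
\[
(X+Y)^N-X^N-Y^N\equiv\sum c_{i,j}X^iY^j .
\]
Now transport this along the isomorphism of Lemma \ref{L6}, sending $Y\mapsto(a-X^r)^{r'}$, and use (1) to replace the target ideal by $(\Theta,N)$; this proves (3). For (4), reduce each $X^i(a-X^r)^{r'j}$ modulo the monic $\Theta$ by Lemma \ref{L2}(1) in one variable. Coefficients that are multiples of $q$ remain multiples of $q$ after reduction, by Lemma \ref{L2}(3). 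Uniqueness follows because $(\mathbb Z/N\mathbb Z)[X]/(\Theta)$ is free with basis $1,\ldots,X^{d-1}$.

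The main obstacle is (1) for non-square-free $N$, namely producing a monic generator after the substitution $X\mapsto X^r$. I would try the characteristic-polynomial argument first.
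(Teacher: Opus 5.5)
Your proposal is correct. For parts (2)--(4) it is essentially the paper's argument; the real difference is in part (1).

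\textbf{Part (1).} The paper settles (1) in one line, citing Isomorphism (\ref{EQ11.1}) and the failure of $(\sharp_{m;a})$. That isomorphism moves $X$ to $X^r$, so it does not by itself carry the monic generator $\vartheta$ to a monic generator of $J=\bigl(\Psi_m(X),\Psi_m(a-X^r),N\bigr)$. What it does give is that $(\mathbb Z/N\mathbb Z)[X]/J$ is free of rank $d=\deg\vartheta$. Your Cayley--Hamilton step supplies exactly the missing link: the characteristic polynomial $\chi$ of multiplication by $X$ lies in $J$, and the surjection $(\mathbb Z/N\mathbb Z)[X]/(\chi)\to(\mathbb Z/N\mathbb Z)[X]/J$ between free modules of equal rank is an isomorphism. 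This works for every $N$, not only the square-free ones covered by Corollary \ref{C1}. The detour through $\vartheta(X^r)$ can be dropped.

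\textbf{Part (2).} You use that $R_{m,r,\ell;a}$ is reduced, where the paper uses separability of $\theta_{m,r,q;a}$ in the one-variable model of Proposition \ref{P4}. These are the same fact. Your version, however, does not depend on (1), and it makes explicit the trivial case $r=1$, which Definition \ref{D1} excludes and the paper leaves implicit.

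\textbf{Part (3).} You rederive the congruence from Proposition \ref{P1} rather than citing Proposition \ref{P3}(2). This avoids that proposition's extra hypotheses $p\ge 3$ and $1<m<p$, which are not assumed here. Like the paper's proof, yours yields $c_{i,j}\in q\mathbb Z\cap\llbracket0,N-1\rrbracket$; the statement's $\llbracket1,N-1\rrbracket$ should read this way.
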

\begin{proof}
	Part (1) follows from Isomorphism (\ref{EQ11.1}) and the fact that $N$ does not have property $(\sharp_{m;a})$.
	
	As $N$ does not have property $(\flat_{m;a})$, we have 
	$$\bigl(X+(a-X^r)^{r'}-a\bigr)^N\equiv 0 \pmod{(\Theta_{m, r, N;a}(X), N)}$$ 
	by Proposition \ref{P4}. Let $q$ be a prime divisor of $N$; so $\theta_{m, r, q;a}(X)$ is the reduction of $\Theta_{m, r, N;a}(X)$ modulo $q$. Then in $\mathbb{F}_q[X]$ we have that
	$\theta_{m, r, q;a}(X) \mid \bigl(X+(a-X^r)^{r'}-a\bigr)^N$. As we have $\theta_{m, r, q;a}(X)\mid X^m-1$ and $q\nmid m$, the polynomial $\theta_{m, r, q;a}(X)$
	is separable. Therefore we have $\theta_{m, r, q;a}(X) \mid \bigl(X+(a-X^r)^{r'}-a\bigr)$. Hence
	$\theta_{m, r, q;a}(X) \mid \bigl(X+(a-X^r)^{r'}\bigr)^N -a^N$. Thus 
	$$\bigl(X+(a-X^r)^{r'}\bigr)^N -a^N \equiv 0 \pmod{(\Theta_{m, r, N;a}(X), q)}$$ 
	for each prime divisor $q$ of $N$. As $N$ is square free, we get that 
	$$\bigl(X+(a-X^r)^{r'}\bigr)^N -a^N \equiv 0 \pmod{(\Theta_{m, r, N;a}(X), N)}$$ 
	by the Chinese Reminder Theorem. Taking into account the isomorphism of Lemma \ref{L6}
	we get that $(X+Y)^N-a^N \in I_{m, r, N;a}$, so part (2) holds. 
	
	For part (3), note that $N\in\mathbb T$ and $N=pq$ imply that $p\neq q$. As $q\mid a^p-a$, we get that $N\mid a^N-a^q$. From this, part (2), and Proposition \ref{P3}(2) it follows that there exist unique integers $c_{i,j}\in q\mathbb Z \cap \llbracket0, N-1\rrbracket$ indexed by $(i,j)\in\mathbb I_{m,r}$ such that we have the following congruence $(X + Y)^N - X^N - Y^N \equiv \sum_{(i,j)\in\mathbb I_{m,r}} c_{i,j} X^i Y^j \pmod{I_{m,r,N;a}}$.
	Applying the isomorphism of Lemma \ref{L6} to this congruence, we get that part (3) holds. 
	
	Part (4) follows from part (3) and from the Division Algorithm in $(\mathbb Z/N\mathbb Z)[X]$ applied to the division of the polynomial 
	$[X+(a-X^r)^{r'}]^N-X^N-(a-X^r)^{r'N}+(N)$ by $\Theta_{m,r,N;a}(X)+(N)$.
\end{proof}

\begin{Le}\label{L9}
	Let a prime divisor $p$ of $N$ and a triple $(m, n, r, r')\in\mathbb{N}^4$ be such that $p=mn+r$, $r\in\llbracket1,m-1\rrbracket$, and $m\mid rr'-1$. 
	Let $a\in \mathbb{Z}$. We assume that there exists a monic polynomial $\Theta_{m,r,N; a}(X)\in \mathbb{Z}[X]$ such that 
	$$\bigl(\Psi_m(X),\Psi_m(a-X^r), N\bigr)=\bigl(\Theta_{m,r,N; a}(X), N\bigr).\footnote{For instance, this holds if $N$ is square free and does not have property $(\sharp_{m;a})$ by Corollary \ref{C1}, as $\Theta_{m,r,N;a}(X)$ can be any monic polynomial whose reduction modulo $N$ is $\theta_{m,r,N;a}(X)$.}$$ 
	Let $d:=t_{m,p;a}=\deg(\Theta_{m,r,N;a})\in\mathbb N\cup\{0\}$. Then the following properties hold.
	
	\medskip
	{\bf (1)} 
	There exists a unique $d$-tuple $(b_0,\ldots,b_{d-1})\in \llbracket0, N-1\rrbracket^d$ such that 		
	
	$$
	[X+(a-X^r)^{r'}-a]^N \equiv \overset{d-1}{\underset{i=0}{\sum}}b_{i}X^i \pmod{(\Theta_{m,r,N;a}(X), N)}.
	$$
	
	{\bf (2)} If
	$$
	[X+(a-X^r)^{r'}-a]^N \not \equiv 0\pmod{(\Theta_{m,r,N;a}(X),N)}, 
	$$
	then there exists $i\in \llbracket0, d\rrbracket$ such that $\gcd(b_i, N)$ is a proper divisor of $N$ divisible by $p$.
\end{Le}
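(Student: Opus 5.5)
Part (1) is a direct application of the Division Algorithm in $(\mathbb Z/N\mathbb Z)[X]$. Since $\Theta_{m,r,N;a}(X)$ is monic of degree $d$, its image modulo $N$ is monic. Lemma \ref{L7} and the argument of Lemma \ref{L2}(1) with $g:=\Theta_{m,r,N;a}$ then show that $(\mathbb Z/N\mathbb Z)[X]/(\Theta_{m,r,N;a})$ is a free $\mathbb Z/N\mathbb Z$-module with basis $\{X^i\}_{i\in\llbracket0,d-1\rrbracket}$. Reducing $[X+(a-X^r)^{r'}-a]^N$ modulo $N$ and dividing by $\Theta_{m,r,N;a}$ gives a remainder of degree $<d$. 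Taking the representatives in $\llbracket0,N-1\rrbracket$ of its coefficients gives existence, and freeness of the basis gives uniqueness.

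For part (2), I would first compute the polynomial modulo $p$. By Lemma \ref{L1}, in $\mathbb F_p[X]$ we have $[X+(a-X^r)^{r'}-a]^N\equiv [X^p+(a-X^r)^{r'p}-a^p]^{N/p}$. Modulo $\Theta_{m,r,N;a}$, hence modulo $\Psi_m(X)$ and $\Psi_m(a-X^r)$, we have $X^m\equiv 1$ and $(a-X^r)^m\equiv 1$. Since $p\equiv r\pmod m$ and $rr'\equiv 1\pmod m$, we get $X^p\equiv X^r$ and $(a-X^r)^{r'p}\equiv (a-X^r)^{r'r}\equiv a-X^r$. The bracket therefore becomes $a-a^p$.

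Here lies the one real subtlety. The bracket is $0$ modulo $p$ only if $a^p\equiv a\pmod p$, and this holds by Fermat's little theorem since $a\in\mathbb Z$. So $[X+(a-X^r)^{r'}-a]^N\in(\Theta_{m,r,N;a},p)$. Equivalently, this is the argument of Proposition \ref{P2}(1)--(2) transported through the isomorphism of Lemma \ref{L6}, with $b=-a$, in which case $(b+a)^N=0$. By uniqueness of the remainder, now in $\mathbb F_p[X]/(\theta_{m,p,r;a})$, every coefficient satisfies $p\mid b_i$. This is the analogue of Lemma \ref{L2}(3).

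Finally, the hypothesis says the polynomial is nonzero modulo $(\Theta_{m,r,N;a},N)$. By uniqueness in part (1), some $b_i\neq 0$ with $b_i\in\llbracket1,N-1\rrbracket$. Then $g:=\gcd(b_i,N)$ satisfies $p\mid g$ because $p\mid b_i$, and $g<N$ because $0<b_i<N$. Since $p\mid g$ we also have $g>1$, so $g$ is a proper divisor of $N$ divisible by $p$. The index lies in $\llbracket0,d-1\rrbracket\subset\llbracket0,d\rrbracket$.

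The main obstacle is justifying that the reduction modulo $p$ of the unique remainder over $\mathbb Z/N\mathbb Z$ is the remainder over $\mathbb F_p$. This follows because the monic division algorithm commutes with the ring map $\mathbb Z/N\mathbb Z\to\mathbb F_p$.
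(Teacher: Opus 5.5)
Your proof is correct and follows essentially the same route as the paper. Frobenius turns the bracket modulo $p$ into $X^p+(a-X^r)^{pr'}-a$. The relations $X^m\equiv 1$ and $(a-X^r)^m\equiv 1$, together with $p\equiv r$ and $rr'\equiv 1\pmod m$, collapse this to $0$, and uniqueness of the remainder modulo the monic $\Theta_{m,r,N;a}$ then forces $p\mid b_i$ for every $i$.

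The only cosmetic difference is bookkeeping. The paper carries an explicit error term $pF(X)\in\mathbb Z[X]$, compares the $b_i$ with the remainder coefficients of $[pF(X)]^{N/p}$, and uses $(-a)^p\equiv -a\pmod p$ implicitly. You instead work directly in $\mathbb F_p[X]$ and make the Fermat step $a^p\equiv a\pmod p$ explicit.
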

\begin{proof}
	Part (1) follows from the Division Algorithm in $(\mathbb Z/N\mathbb Z)[X]$ applied to the division of the polynomial $[X+(a-X^r)^{r'}-a]^N+(N)$ by $\Theta_{m,r,N;a}(X)+(N)$. 
	
	By Lemma \ref{L1} we have
	$$
	[X+(a-X^r)^{r'}-a]^N = [X+(a-X^r)^{r'}-a]^{p(\frac{N}{p})}=[X^p+(a-X^r)^{pr'}-a+pF(X)]^{(\frac{N}{p})},
	$$
	where $F(X)\in \mathbb{Z}[X]$. Let $k\in\mathbb N$ be such that $rr'=1+mk$.
	Moreover, as we have congruences $X^{mr} \equiv 1 \pmod{\Psi_m(X)}$ and $(a-X^r)^{mnr'+km} \equiv 1 \pmod{\Psi_m(a-X^r)}$, we compute
	$$
	X^p+(a-X^r)^{pr'}=X^{mn+r}+(a-X^r)^{(mn+r)r'}=X^{mn+r}+(a-X^r)^{mnr'+1+km} 
	$$
	$$
	\equiv X^r + (a-X^r) \pmod{(\Psi_m(X),\Psi_m(a-X^r))}\equiv a \pmod{(\Psi_m(X),\Psi_m(a-X^r))}. 
	$$
	As the ideal $\bigl(\Psi_m(X),\Psi_m(a-X^r)\bigr)$ is contained in $\bigl(\Theta_{m,r,N;a}(X), N\bigr)$, we get that
	$$
	[X+(a-X^r)^{r'}-a]^N \equiv [pF(X)]^{(\frac{N}{p})} \pmod{(\Theta_{m,r,N;a}(X), N)} . 
	$$
	Division Algorithm similarly gives that there exists a unique $d$-tuple 
	$$(c_0,\ldots,c_{d-1})\in\{0,p,2p,\ldots,N-p\}^{d}$$ 
	such that
	$$
	 [pF(X)]^{(\frac{N}{p})} \equiv \overset{d-1}{\underset{i=0}{\sum}}c_{i}X^i \pmod{(\Theta_{m,r,N;a}(X), N)}.
	$$
	As $[X+(a-X^r)^{r'}-a]^N \not \equiv 0\pmod{(\Theta_{m,r,N;a}(X), N)}$, there exists $i\in \llbracket0, d\rrbracket$ such that $b_i\neq 0$. This and the congruence $b_i\equiv c_i \pmod{N}$ imply that $p\mid\gcd(b_i, N)\neq N$. So part (2) holds.
\end{proof}

\begin{Rem}\normalfont\label{R4}
	For a fixed integer $m$, as the prime divisors of $N$ are not directly observable, the residue $r$ modulo $m$ of any such prime divisor remains inaccessible. 
	Because of this, in many applications we perform computations for each integer $r$ such that $r\in\llbracket2,m-1\rrbracket$ and $\gcd(m,r)=1$ (e.g., see proof of Theorem \ref{TH3} in which $l$ is used and Algorithm 2 of Section \ref{S8}).
\end{Rem}

\begin{Le}\label{L10} 
	Let $(N,a)\in\mathbb T\times \mathbb{Z}$. Suppose there exist distinct prime divisors $p$ and $q$ of $N$ and $m\in\mathbb{N}\setminus\{1,2\}$ such that $N$ has property $(\natural)$ with respect to $(p,q,m; a)$. 
	Let $(r,r')\in\llbracket1,m-1\rrbracket^2$ be such that $m\mid rr'-1$. Suppose that there exists a monic polynomial $\Theta_{m,N,r;a}(X)\in \mathbb{Z}[X]$ such that 
	$$\bigl(\Psi_m(X),\Psi_m(a-X^r), N\bigr)=\bigl(\Theta_{m,N,r;a}(X), N)\bigr).$$
Then $[X+(a-X^r)^{r'}-a]^N \not \equiv 0\pmod{(\Theta_{m,N,r;a}(X), N)}$. 
\end{Le}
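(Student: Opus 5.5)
The idea is to reduce modulo the prime $q$ and produce an explicit point of $\overline{\mathbb F_q}$ at which $\Theta_{m,N,r;a}$ vanishes but $X+(a-X^r)^{r'}-a$ does not. Throughout I read $r$ as the residue of $p$ modulo $m$, as in Lemma \ref{L9}, Proposition \ref{P9} and Remark \ref{R4}. The statement does not say this explicitly, but the argument needs it: $rr'\equiv 1\pmod m$ with $r\equiv p\pmod m$ gives $pr'\equiv 1\pmod m$.

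\emph{Step 1 (setup).} By Definition \ref{D2}(1) we have $p\nmid m$ and there are $m$-th roots of unity $X_1,Y_1\in\overline{\mathbb F_q}$ with $X_1^p+Y_1^p=a$ and $X_1+Y_1\neq a$. Suppose, for contradiction, that $[X+(a-X^r)^{r'}-a]^N\in(\Theta_{m,N,r;a}(X),N)$. Since $q\mid N$, the same membership holds in $(\Theta_{m,N,r;a}(X),q)$. Reducing the ideal identity of the hypothesis modulo $q$, the image $\bar\Theta$ of $\Theta_{m,N,r;a}$ in $\mathbb F_q[X]$ is monic and generates $\bigl(\Psi_m(X),\Psi_m(a-X^r)\bigr)\subset\mathbb F_q[X]$. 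So $\bar\Theta$ is the gcd $\theta_{m,q,r;a}$ of the reductions of these two polynomials, and every common root of them in $\overline{\mathbb F_q}$ is a root of $\bar\Theta$.

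\emph{Step 2 (the witness point).} Set $x:=X_1^{pr'}\in\overline{\mathbb F_q}$. Then $x^m=1$. Also $x^r=X_1^{prr'}=X_1^p$, since $X_1^m=1$ and $m\mid rr'-1$. Hence $a-x^r=Y_1^p$, which is an $m$-th root of unity, so $\Psi_m(a-x^r)=0$. By Step 1, $\bar\Theta(x)=0$. Moreover
$$(a-x^r)^{r'}=Y_1^{pr'}=Y_1 \quad\text{and}\quad x=X_1,$$
because $pr'\equiv rr'\equiv1\pmod m$. Therefore $x+(a-x^r)^{r'}-a=X_1+Y_1-a\neq 0$.

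\emph{Step 3 (contradiction).} Evaluating at $x$ is a ring homomorphism $\mathbb F_q[X]\to\overline{\mathbb F_q}$ that kills $\bar\Theta$. It therefore kills the whole ideal $(\bar\Theta)$, which by Step 1 contains the reduction of $[X+(a-X^r)^{r'}-a]^N$. This forces $(X_1+Y_1-a)^N=0$ in the field $\overline{\mathbb F_q}$, contradicting Step 2. Separability of $\bar\Theta$ is not needed, since we only evaluate at a root. Hence the noncongruence holds.

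The only delicate point is the compatibility $pr'\equiv1\pmod m$, i.e.\ the implicit hypothesis $r\equiv p\pmod m$. Without it, $x+(a-x^r)^{r'}=X_1^{pr'}+Y_1^{pr'}$ need not differ from $a$. So the main task is to make that hypothesis explicit, consistent with how the lemma is used in Theorem \ref{TH3} via Remark \ref{R4}. Everything else is direct evaluation.
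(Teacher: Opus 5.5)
Your proof is correct. You are also right that $r$ must be read as the residue of $p$ modulo $m$, and this is not an artifact of your method. For $(r,r')=(1,1)$ the polynomial $X+(a-X)-a$ is identically $0$. The paper's own example from Section~\ref{S8} shows the literal statement fails with that choice: $N=77$, $(p,q,m,a)=(11,7,3,10)$, $\Theta=X^2+67X+23$ satisfies all the literal hypotheses.

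The paper's proof is a one-line citation of Propositions~\ref{P7} and~\ref{P4}, applied to the square-free $pq$. In substance it does two things:
\begin{enumerate}
\item It evaluates $(X+Y-a)^N$ at the point $(X_1,Y_1)$ of the zero set of the two-variable ideal $I_{m,r;a}$ over a finite field of characteristic $q$. This is the proof of Proposition~\ref{P5}, with $r$ the residue of $p$.
\item It transports the resulting noncongruence modulo $I_{m,r,N;a}$ to $\mathbb Z[X]/\bigl(\Psi_m(X),\Psi_m(a-X^r),N\bigr)$ via the isomorphism of Lemma~\ref{L6}, $Y\mapsto(a-X^r)^{r'}$.
\end{enumerate}

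You use the same core idea, nonvanishing at a point of the zero locus, but you do it directly in $\mathbb F_q[X]$. You check by hand that $X_1$ is a common root of $\Psi_m(X)$ and $\Psi_m(a-X^r)$, hence a root of $\bar\Theta$. At that root, $X+(a-X^r)^{r'}-a$ takes the nonzero value $X_1+Y_1-a$.

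Your route is self-contained and more general. It avoids three hypotheses that the cited results carry but the lemma does not:
\begin{itemize}
\item $q\nmid m$, assumed in Proposition~\ref{P5} but not part of Definition~\ref{D2}(1);
\item $\gcd(m,N)=1$, assumed in Proposition~\ref{P4};
\item square-freeness, needed for Proposition~\ref{P7}.
\end{itemize}
The passage from $pq$ back to an arbitrary $N$, both as modulus and as exponent, is also automatic for you, because $z^N=0$ forces $z=0$ in a field. The paper's route, in exchange, presents the lemma as the one-variable form of the implication $(\natural_{m;a})\Rightarrow(\flat_{m;a})$ and reuses results already in place.

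Minor point: your witness $x=X_1^{pr'}$ equals $X_1$, so you can evaluate at $X_1$ directly.
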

\begin{proof} 
	This follows from Propositions \ref{P7} and \ref{P4} applied to the square free $pq\in\mathbb T$.\end{proof}

	
	


\begin{theorem}\label{TH3}
Let $(N, m, a)\in \mathbb{T} \times \mathbb{N}\times \mathbb{Z}$ be such that $N$ has property $(\natural_{m;a})$. Then we can find a proper divisor of $N$ at the cost of 
$$
O\bigl( \phi(m)[m^3 + m^2(\log N)^2] M(\lfloor \log_2 N\rfloor +1) \bigr).
$$
\end{theorem}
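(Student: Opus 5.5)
The algorithm runs over every $r\in\llbracket1,m-1\rrbracket$ with $\gcd(r,m)=1$; there are at most $\phi(m)$ of them, as in Remark \ref{R4}. Since we do not know the residue of $p$ modulo $m$, each $r$ is treated as a candidate. For each candidate $r$ we first compute $r'$ with $m\mid rr'-1$. Then we form the reductions modulo $N$ of $g_0(X):=\Psi_m(X)$, which is monic of degree $m$, and of $g_{-1}(X):=\Psi_m(a-X^r)$, after reducing the latter modulo $\Psi_m(X)$ so that its degree is below $m$. We run the Dichotomic Euclidean Algorithm of Proposition \ref{P8} on this pair. If we land in case (2), we get a proper divisor of $N$ and stop. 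Otherwise, by case (1), we obtain a monic $\Theta_{m,N,r;a}(X)$, normalized by inverting its leading unit, with $\bigl(\Psi_m(X),\Psi_m(a-X^r),N\bigr)=\bigl(\Theta_{m,N,r;a}(X),N\bigr)$. One caveat: the reduction $X\mapsto X^r$ is an automorphism of $(\mathbb Z/N\mathbb Z)[X]/(\Psi_m)$, so the existence of a monic generator does not depend on $r$. If $\gcd(m,N)>1$, the gcd computation at the start already yields a divisor or $N$ itself. We may handle that case first, at the cost of $O(M(\lfloor\log_2N\rfloor+1)\log N)$.

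With $\Theta:=\Theta_{m,N,r;a}$ in hand, of degree $d\le m$, we compute the remainder of $[X+(a-X^r)^{r'}-a]^N$ modulo $(\Theta,N)$. We first reduce $(a-X^r)^{r'}$ by repeated squaring modulo $\Theta$, then compute the $N$-th power of $X+(a-X^r)^{r'}-a$ by square-and-multiply modulo $\Theta$. This gives the coefficients $b_0,\ldots,b_{d-1}$ of Lemma \ref{L9}(1), and we compute $\gcd(b_i,N)$ for each $i$. When $r$ equals the true residue of $p$ modulo $m$, Lemma \ref{L10} guarantees that the remainder is nonzero, since $N$ has property $(\natural)$ with respect to $(p,q,m;a)$. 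In that case Lemma \ref{L9}(2) gives an index $i$ with $\gcd(b_i,N)$ a proper divisor of $N$. For other candidates $r$, any nonzero coefficient $b_i$ with $1<\gcd(b_i,N)<N$ is still a valid output, and a gcd equal to $1$ or $N$ simply means we pass to the next $r$. One subtlety is that $r=1$ (that is, $p\equiv1\pmod m$) is excluded: there $X^p+Y^p=X+Y$ on $m$-th roots of unity, so property $(\natural)$ cannot hold. Consequently some $r\in\llbracket2,m-1\rrbracket$ succeeds.

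For the cost of one candidate $r$, computing $\Psi_m(a-X^r)$ modulo $\Psi_m$ and modulo $N$ takes $O(m^2)$ operations in $\mathbb Z/N\mathbb Z$. By Proposition \ref{P8}, with $c,d\le m$ and $\iota\le m$, the Euclidean step costs $O(m^2)M_{\mod}(N)+O(m\,M(\lfloor\log_2N\rfloor+1)\log N)$. The power $[\cdots]^N$ requires $O(\log N)$ multiplications of polynomials of degree less than $d\le m$, each followed by a reduction modulo $\Theta$ through Lemma \ref{L7}, for $O(m^2)$ operations in $\mathbb Z/N\mathbb Z$ per step. This contributes $O(m^2\log N)M_{\mod}(N)$, plus an $O(m^2\log m)$ term from computing $(a-X^r)^{r'}$. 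The final $d$ gcd computations cost $O(m\log N\,M(\lfloor\log_2N\rfloor+1))$. Using $M_{\mod}(N)=O(M(\lfloor\log_2N\rfloor+1))$ and summing over the $\phi(m)$ candidates, everything fits inside $O(\phi(m)[m^3+m^2(\log N)^2]M(\lfloor\log_2N\rfloor+1))$. That bound leaves generous room: the $m^3$ term absorbs the Euclidean and setup costs, and the $m^2(\log N)^2$ term absorbs the exponentiation together with the gcds.

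I expect the main obstacle to be the correctness bookkeeping rather than the cost. We must make sure that the monic $\Theta$ produced by Proposition \ref{P8}(1) is exactly the one to which Lemmas \ref{L9} and \ref{L10} apply. We must also make sure that for the correct $r$ the nonzero coefficient produced is divisible by $p$ but not by $N$. That last point rests on the congruence modulo $p$ in the proof of Lemma \ref{L9}, which is independent of which $r$ we test.
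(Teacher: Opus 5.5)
Your proposal is correct and follows essentially the same route as the paper. You loop over the at most $\phi(m)$ residues $r$ coprime to $m$ and use the dichotomic Euclidean algorithm of Proposition~\ref{P8} to get either a proper divisor or a monic $\Theta_{m,N,r;a}$; you then compute $[X+(a-X^r)^{r'}-a]^N$ modulo $(\Theta_{m,N,r;a},N)$ and take gcds of its coefficients, and Lemmas~\ref{L10} and~\ref{L9}(2) guarantee success at the true residue of $p$, which is $\neq 1$. Your bookkeeping is slightly sharper than the paper's: you reduce $\Psi_m(a-X^r)$, of degree $rm$, modulo $\Psi_m$ first, and you count $O(\log N)$ rather than $O((\log N)^2)$ multiplications for the $N$-th power. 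One small inaccuracy is in your closing remark. The vanishing of $X^p+(a-X^r)^{pr'}-a$ in the proof of Lemma~\ref{L9} does use $r\equiv p \pmod m$, so it is not independent of $r$; this is harmless because you only invoke it for the true residue.
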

\begin{proof} By replacing $a$ with $b\in\llbracket0,N-1\rrbracket$ such that $a\equiv b\pmod{N}$, for computations it is convenient to assume that $a\in\llbracket0,N-1\rrbracket$. Suppose that $N$ has property $(\natural)$ with respect to $(p,q,m;a)$ for a pair $(p,q)$ of distinct prime divisors of $N$. Let $r\in \llbracket1, m-1\rrbracket$ be the residue of $p$ modulo $m$. 

We first assume that there exists a monic polynomial $\Theta_{m,N,r;a}(X)\in \mathbb{Z}[X]$ such that we have an identity $\bigl(\Theta_{m,N,r;a}(X),N\bigr)=\bigl(\Psi_m(X),\Psi_m(a-X^r),N\bigr)$. Then
	by Lemma \ref{L10} we have that
	$$
	[X+(a-X^r)^{r'}-a]^N\not\equiv 0 \pmod{(\Theta_{m,N,r;a}(X), N)},
	$$
	where $(r, r')\in \mathbb{N}^2$, $r\equiv p \pmod{m}$ and $r r' \equiv 1 \pmod{m}$. If $r=1$ then by taking $r'=1$ we get that $0\not\equiv 0 \pmod{(\Theta_{m,N,r;a}(X), N)}$, a contradiction; so $r\ge 2$. A similar argument gives that $t_{m,p;a}=\deg(\Theta_{m,N,r;a})>0$.
	
We can find a proper divisor of $N$ by Lemma \ref{L9}(2) and we compute the cost below. 
	
	As $\bigl(\Theta_{m,N,r;a}(X),N\bigr)=\bigl(\Psi_m(X),\Psi_m(a-X^r),N\bigr)$, we have $t_{m,p;a}\le m$. 

Let $l\in \llbracket1, m-1\rrbracket$ with $\gcd(l, m)=1$. If there exists no monic polynomial $\Theta_{m,N,l;a}(X)$ in $\mathbb{Z}[X]$ such that $\bigl(\Theta_{m,N,l;a}(X),N\bigr)=\bigl(\Psi_m(X),\Psi_m(a-X^l),N\bigr)$, 
 then by Proposition \ref{P8}(2) applied to $(c,d)=(m, m)$ and 
 $\bigl(g_{-1}(X), g_0(X)\bigr)=\bigl(\Psi_m(a-X^l), \Psi_m(X)\bigr)$,
there exists $\iota\in \llbracket1, m\rrbracket$ such that we can find a proper divisor of $N$ at the cost bounded by
	$$\left(B_{m+\iota-1, m,\iota}-\frac{\iota (\iota-1)}{2}\right)M_{\mod}(N)+B_{m,m,\iota}O(\log_2 N)+O\bigl((2\iota-1) M(\lfloor \log_2 N \rfloor +1) \log N\bigr).$$
We have:
	$$
	B_{m+\iota-1,m,\iota}=m\left(m+\iota-1+\iota-\frac{m(\iota+1)}{2\iota}\right)-\frac{\iota (\iota-1)}{2}
	$$
	and
	$$
	B_{m,m,\iota}=m\left(m+\iota-\frac{m(\iota+1)}{2\iota}\right)-\frac{\iota (\iota-1)}{2}.
	$$	
Viewing these as functions of $\iota$ in the real interval $[1,m]$, the derivatives are easily see to be positive in the mentioned interval and hence we get the following upper bounds
$$B_{m+\iota-1,m,\iota}\le B_{2m-1,m,m}=\frac{5}{2}m^2-2m-\frac{1}{2}<\frac{5}{2}m^2$$
and
$$B_{m,m,\iota}\le B_{m,m,m}=m^2.$$
Thus, if there exists no monic polynomial $\Theta_{m,N,l;a}(X)\in \mathbb{Z}[X]$ as above, then we can find a proper divisor of $N$ at the cost of
	$O\bigl(m^2 M_{\mod}(N)+m^2 \log_2 N + m M(\lfloor \log_2 N \rfloor +1) \log N \bigr)$ and hence, as $M_{\mod}(N)=O\bigl(M(\lfloor \log_2 N\rfloor +1)\bigr)$ by \cite{GG}, Corollary 9.9, at the cost of
	\begin{align}\label{bound1}
	O\bigl( (m^2+m\log N)M(\lfloor \log_2 N\rfloor +1)\bigr) . 	
	\end{align}
	
	Assume now that there exists a monic polynomial $\Theta_{m,N,l;a}(X)\in \mathbb{Z}[X]$ such that we have $\bigl(\Theta_{m,N,l;a}(X),N\bigr)=\bigl(\Psi_m(X),\Psi_m(a-X^l),N\bigr)$. 
	Then by Proposition \ref{P8}(1) applied to the quadruple $(c,d,g_{-1},g_0)$ being $\bigl(m, m,\Psi_m(a-X^l),\Psi_m(X)\bigr)$, there exists $\iota\in \llbracket1, m\rrbracket$ such that we can find $\Theta_{m,N,l;a}(X)$ at the cost bounded by
	$$\left(B_{m+\iota-1,m,\iota}-\frac{\iota (\iota-1)}{2}\right)M_{\mod}(N)+B_{m,m,\iota}O(\log_2 N)+O\bigl(2(\iota-1) M(\lfloor \log_2 N \rfloor +1) \log N\bigr).$$
	This expression differs from the previous one only by a replacement of $2\iota-1$ by the smaller $2(\iota-1)$. Thus, we can find $\Theta_{m,N,l;a}(X)$ at the cost of
	$O\bigl( (m^2+m\log N)M(\lfloor \log_2 N\rfloor +1)\bigr)$. 
	
	From Lemma \ref{L4}(2) applied to the reduction modulo the largest square free natural number that divides $N$, we get that $\deg\bigl(\Theta_{m,N,l;a}(X)\bigr)=t_{m,p;a}>0$. 

 Let $l'\in \llbracket2, m-1\rrbracket$ be such that $ll'\equiv 1 \pmod{m}$. 
 We know that the cost of computing $l'$ is $O(M(\lfloor \log_2 m \rfloor +1)\log m )$.
 By Lemma \ref{L9}(1) applied to $f(X)=\Theta_{m,N,l;a}(X)$ and $d=t_{m,p;a}$, there exists a unique 
 $t_{m,p;1}$-tuple $(b_{0,l},\ldots,b_{t_{m,p;1}-1,l})\in \llbracket0, N-1\rrbracket^{t_{m,p;a}}$ such that 		
	$$
	[X+(a-X^l)^{l'}-a]^N \equiv \overset{t_{m,p;a}-1}{\underset{i=0}{\sum}}b_{i,l}X^i \pmod{(\Theta_{m,N,l;a}, N)}.
	$$
 We show that one can find the tuple $(b_{0,l},\ldots,b_{t_{m,p;a}-1,l})$ at the cost of 
 $$O\bigl(m^2(\log N)^2 M(\lfloor \log_2 N\rfloor +1)\bigr).$$ 
 Let $\theta_{m,N,l;a}(X)\in (\mathbb{Z}/N\mathbb{Z})[X]$ be $\Theta_{m,N,l;a}(X)$ modulo $N$.
 As $\theta_{m, N, l; a}(X)$ is a monic polynomial of degree $t_{m,p;a}$, there exists a unique polynomial $g_l(X)\in (\mathbb{Z}/N\mathbb{Z})[X]$ such that $\deg(g_l)< t_{m,p;a}$
 and $X+(a-X^l)^{l'}-a \equiv g_l(X) \pmod{\theta_{m,N,l;a}(X)}$. Here $X+(a-X^l)^{l'}-a$ is viewed as a polynomial in $(\mathbb{Z}/N\mathbb{Z})[X]$.
 By Lemma \ref{L7} applied to the pairs $\bigl(f(X), g(X)\bigr)=\bigl(X+(a-X^l)^{l'}-a, \theta_{m,N,l;a}(X)\bigr)$ and $(c, d)=(ll', t_{m,p;a})$, we can find $g_l(X)$ at the cost of
 \begin{align}\label{bound2}
 t_{m,p;a}(ll'+1-t_{m,p;a})\left[M_{\mod}(N)+O(\log_2 N)\right]=O\bigl(m^3 M(\lfloor \log_2 N\rfloor +1)\bigr).
 \end{align}
 
We have that $\sum_{i=0}^{t_{m,p;a}-1} b_{i,l}X^i -g_l(X)^N \in \bigl(\theta_{m,N,l;a}(X)\bigr)$. Here $\sum_{i=0}^{t_{m,p;a}-1} b_{i,l}X^i$ is viewed as a polynomial
 in $(\mathbb{Z}/N\mathbb{Z})[X]$. Thus, we can find $(b_{0,l},\ldots,b_{t_{m,p;a}-1,l})$
 using the coefficients of the residue of $g_l(X)^N$ in $(\mathbb{Z}/N\mathbb{Z})[X]/\bigl(\theta_{m,N,l;a}(X)\bigl)$. 
 As the cost of one multiplication of two polynomials in $(\mathbb{Z}/N\mathbb{Z})[X]$ with degree
 less than $t_{m,p;a}$ is $O\bigl( t_{m,p;a}^2 M_{\mod}(N)\bigr)$, by \cite{GG}, Corollary 9.7, the cost of one multiplication of residues in
 $(\mathbb{Z}/N\mathbb{Z})[X]/\bigl(\theta_{m,N,l;a}(X)\bigr)$ is $O\bigl( t_{m,p;a}^2 M_{\mod}(N)\bigr)$. Using the fast powering method, the computation of $g_l(X)^N$ requires $O\bigl((\log N)^2\bigr)$ multiplications. Thus, we can find
 $(b_{0,l},\ldots,b_{t_{m,p;a}-1,l})$ at the cost of $O\bigl( t_{m,p;a}^2 M_{\mod}(N)(\log N)^2\bigr)$. As $t_{m,p;a}<m$, we can find
 $(b_{0,l},\ldots,b_{t_{m,p;a}-1,l})$ at the cost of
 \begin{align}\label{bound3}
 	O\bigl( m^2(\log N)^2 M(\lfloor \log_2 N\rfloor +1)\bigr) . 
 \end{align}
 
 The next step is to compute $\gcd(b_{i,l}, N)$ for each $i\in \llbracket0, t_{m,p;a}-1\rrbracket$. This can be done at the cost of
 $O\bigl(M(\lfloor \log_2 N \rfloor +1) \log N\bigr)$ for each $i\in \llbracket0, t_{m,p;a}-1\rrbracket$. 
 Thus, for a fixed $l$, using Display (\ref{bound1}), Equation (\ref{bound2}), and Display (\ref{bound3}), 
 we get that all integers $\gcd(b_{i,l}, N)$, $i\in \llbracket0, t_{m,p;a}-1\rrbracket$, can be computed at the cost of
 $$
 O\bigl([m^2+m\log N +m^3 + m^2(\log N)^2] M(\lfloor \log_2 N\rfloor +1)\bigr)= O\bigl([m^3 + m^2(\log N)^2] M(\lfloor \log_2 N\rfloor +1)\bigr) .
 $$	
 For a given $l$ it is possible that $\gcd(b_{i,l}, N)=1$ for all $i\in \llbracket0, t_{m,p;a}-1\rrbracket$. But as $N$ has property $(\natural)$
 with respect to $(p, q, m; a)$, by Lemma \ref{L9}(2) and Lemma \ref{L10} we have that
 the above procedure gives a proper divisor of $N$ for $l=r\equiv p \pmod{m}$. 
 If we perform the above procedure for all $l\in\llbracket2,m-1\rrbracket$ with $\gcd(l, m)=1$, 
 then the total cost is bounded by
	$$
	O\bigl( \phi(m)[m^3 + m^2(\log N)^2] M(\lfloor \log_2 N\rfloor +1) \bigr),
	$$
	and the theorem is proved. 
\end{proof}

\section{On property $(\natural)$}\label{S7}

Let $(m, a)\in \mathbb{N}\times\mathbb Z$. We define $\tilde\Psi_{m;a}(X)\in\mathbb Z[X]$ by 
 $$
 \tilde\Psi_{m;a}(X) := \begin{cases} X^{m}-1  \quad\quad\quad\quad\quad\quad\quad\quad\quad\quad\, {\rm if} \quad 2\nmid m, \quad a=0\\
 1  \quad\quad\quad\quad\quad\quad\quad\quad\quad\quad\quad\quad\quad {\rm if} \quad 2\mid m, \quad a=0\\
 X^m -1  \quad\quad\quad\quad\quad\quad\quad\quad\quad\quad\, {\rm if} \quad 6\nmid m, \quad a=1\\ 
 1+X^6+X^{12}+\cdots + X^{m-6} \quad\, {\rm if} \quad 6\mid m, \quad a=1\\
 X^m -1  \quad\quad\quad\quad\quad\quad\quad\quad\quad\quad\, {\rm if} \quad 3\nmid m, \quad a=-1\\
 1+X^3+X^{6}+\cdots + X^{m-3} \quad\;\; {\rm if} \quad 3\mid m, \quad a=-1\\
 1+X+X^2+\cdots +X^{m-1}  \quad\quad {\rm if} \quad a=2\\
 X^{m}-1  \quad\quad\quad\quad\quad\quad\quad\quad\quad\quad\, {\rm if} \quad 2\nmid m, \quad a=-2\\
 -1+X-X^2+\cdots +X^{m-1}  \quad\; {\rm if} \quad 2\mid m, \quad a=-2\\
 X^{m}-1  \quad\quad\quad\quad\quad\quad\quad\quad\quad\quad\, {\rm if} \quad |a|>2 . \end{cases}
 $$
 
\begin{Le} \label{L11}
 	Let $(m, a)\in \mathbb N \times \mathbb{Z}$. Then the following properties hold.
 	
 	\medskip
 	{\bf (1)} 
 	We have $\tilde{\Psi}_{m;a}(X)=\Psi_m(X)$ iff $\Delta_{m; a}\neq 0$.
 	
 	\smallskip
 	
 	{\bf (2)} 
 	We have $\Psi_m(X)=(X-1)\tilde{\Psi}_{m;2}(X)$.
 	
 	\smallskip
 	
 	{\bf (3)} 
 	If $2\mid m$, then $\Psi_m(X)=(X+1)\tilde{\Psi}_{m;-2}(X)$.		
 	
 	\smallskip
 	
 	{\bf (4)} 
 	If $6\mid m$, then $\Psi_m(X)=(X^6-1)\tilde{\Psi}_{m;1}(X)$.
 	
 	\smallskip
 	
 	{\bf (5)} 
 	If $3\mid m$, then $\Psi_m(X)=(X^3-1)\tilde{\Psi}_{m;-1}(X)$.
 \end{Le}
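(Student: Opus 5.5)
Parts (2) to (5) are elementary polynomial identities, and I will check them first. Each one either is a geometric-series factorization of $X^m-1$ or, for $a=-2$, can be verified by multiplying out.

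For part (2), $\tilde\Psi_{m;2}=1+X+\cdots+X^{m-1}$, and $(X-1)(1+X+\cdots+X^{m-1})=X^m-1$.

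For part (3), with $m$ even, I multiply $(X+1)$ by the alternating sum $-1+X-X^2+\cdots+X^{m-1}$. The intermediate terms telescope and leave $X^m-1$. I will check the signs directly for $m=2$, where $(X+1)(X-1)=X^2-1$.

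For parts (4) and (5), I set $Z:=X^6$ (resp.\ $Z:=X^3$) and write $m=6k$ (resp.\ $m=3k$). Then $\tilde\Psi_{m;1}=1+Z+\cdots+Z^{k-1}$ (resp.\ $\tilde\Psi_{m;-1}=1+Z+\cdots+Z^{k-1}$). Hence $(Z-1)\tilde\Psi=Z^k-1=X^m-1$, which gives both parts.

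For part (1), I go through the cases of the definition of $\tilde\Psi_{m;a}$ and compare them with the list in Theorem \ref{TH1}(3). That theorem says $\Delta_{m;a}\neq 0$ iff one of the following holds: $|a|\ge 3$; $a=1$ and $6\nmid m$; $a=-1$ and $3\nmid m$; $a=0$ and $m$ odd; $a=-2$ and $m$ odd. In each of these cases the definition gives $\tilde\Psi_{m;a}=X^m-1=\Psi_m(X)$. In every remaining case I must show $\tilde\Psi_{m;a}\neq\Psi_m$. The remaining cases are: $a=0$ with $m$ even, $a=1$ with $6\mid m$, $a=-1$ with $3\mid m$, $a=-2$ with $m$ even, and $a=2$.

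For these I argue by degree, using parts (2) to (5). When $a=0$ and $m$ is even, $\tilde\Psi_{m;0}=1$, which has degree $0<m$. In the other four cases $\Psi_m$ is a product of $\tilde\Psi_{m;a}$ with a factor of positive degree. So $\deg\tilde\Psi_{m;a}<m=\deg\Psi_m$, and the two polynomials cannot be equal.

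The only genuine obstacle is the bookkeeping. I need the case list in the definition of $\tilde\Psi_{m;a}$ to partition the pairs $(m,a)$ exactly as Theorem \ref{TH1}(3) does. In particular, $a=2$ must be covered, since there $\Delta_{m;2}=0$ for every $m$. This follows from Theorem \ref{TH1}(3), because $a=2$ satisfies none of (3.a) to (3.e), or equivalently from Lemma \ref{L2.5}(9).
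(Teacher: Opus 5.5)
Your proposal is correct and follows the paper's proof: parts (2)--(5) are read off from the definition of $\tilde\Psi_{m;a}$ as geometric-series factorizations of $X^m-1$, and part (1) comes from matching the case list defining $\tilde\Psi_{m;a}$ against conditions (3.a)--(3.e) of Theorem \ref{TH1}(3). Your degree comparison in the exceptional cases simply makes explicit what the paper leaves to ``the very definition'' of $\tilde\Psi_{m;a}$.
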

 \begin{proof} From the very definition of $\tilde{\Psi}_{m;a}(X)$ we get that parts (2) to (5) hold and that we have $\tilde{\Psi}_{m;a}(X)=x^m-1=\Psi_m(X)$ iff
 	$$
 	(m, a)\notin \bigl(6\mathbb N \times \{1\}\bigr)\cup \bigl(3\mathbb N \times \{-1\}\bigr)\cup \bigl(\mathbb N \times \{2\}\bigr)\cup \bigl(2\mathbb N \times \{-2\}\bigr)\cup
 	\bigl(2\mathbb N \times \{0\}\bigr)
 	$$
and thus iff $\Delta_{m; a}\neq 0$ by Theorem \ref{TH1}(3). So part (1) also holds.\end{proof}

\begin{Le}\label{L12}
For $(m, a)\in \mathbb{N}\times \mathbb{Z}$ let $\tilde\Delta_{m;a}:=|\Res\bigl(\tilde{\Psi}_{m;a}(X),\Psi_m(a-X)\bigr)|$. Then the following properties hold.

\medskip
{\bf (1)} We have inequalities $0<\tilde\Delta_{m;a}<(|a|+2)^{m^2}$.

\smallskip
{\bf (2)} If $m\ge 2$, then the number of prime divisors of $\tilde\Delta_{m;a}$ is bounded by $m^2\log_3(|a|+2)$.\footnote{The asymptotic bound of the number of primes dividing a fixed $N\in\llbracket1,(|a|+2)^{m^2}-1\rrbracket$ is $\frac{m^2\log(|a|+2)}{\log\bigl(m^2\log(|a|+2)\bigr)}$ (see \cite{HW}, Section 22.10, Page 471), for large $m$.}
\end{Le}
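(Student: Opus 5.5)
The plan is to write $\tilde\Delta_{m;a}$ as a product over the complex roots of $\tilde\Psi_{m;a}$. Since $\tilde\Psi_{m;a}$ is monic in every case (for $a=-2$, $m$ even, the leading term is $X^{m-1}$), the standard product formula for resultants gives
$$\tilde\Delta_{m;a}=\Big|\prod_{\xi}\Psi_m(a-\xi)\Big|=\Big|\prod_{\xi}\prod_{j=1}^m (a-\xi-\zeta_m^j)\Big|,$$
where $\xi$ runs over the roots of $\tilde\Psi_{m;a}(X)$ with multiplicity. In each case of the definition, $\tilde\Psi_{m;a}$ divides $\Psi_m$: this holds trivially when $\tilde\Psi_{m;a}\in\{1,\Psi_m\}$, and otherwise it follows from Lemma \ref{L11}(2)--(5). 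In the remaining case $a=0$, $m$ even, we have $\tilde\Psi_{m;0}=1$, so the resultant is $1$. Hence every $\xi$ is an $m$-th root of unity, and there are at most $m$ of them.

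\emph{Upper bound.} Each factor satisfies $|a-\xi-\zeta_m^j|\le |a|+2$, and there are at most $m\cdot m=m^2$ factors. This gives $\tilde\Delta_{m;a}\le (|a|+2)^{m^2}$. To make the inequality strict, I will exhibit one factor with strictly smaller modulus: equality $|a-\xi-\zeta_m^j|=|a|+2$ forces $\xi=\zeta_m^j=-\operatorname{sgn}(a)$, or both $\xi$ and $\zeta_m^j$ to be the same unit complex number when $a=0$. Taking $j=m$, so that $\zeta_m^j=1$, the factor $|a-\xi-1|$ is at most $|a|+2$ and equals it only if $\xi=1$ and $a\le 0$, together with the triangle-inequality equality condition. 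If the trivial case ($\tilde\Psi=1$) or a degree drop occurs, strictness is immediate. I will therefore run through the cases briefly: when $\xi=1$ is a root, pair it with $\zeta_m^j=1$ to get the factor $|a-2|$, which is $<|a|+2$ unless $a\le0$; for $a\le 0$, instead pair it with $\zeta_m^j=\xi$ itself, giving $|a-2\xi|$, which is fine unless $\xi=-1$. A cleaner route is to note that $|a-\xi-\zeta|=|a|+2$ requires $\xi=\zeta=\pm1$ (for $a\neq0$), so the factor with $\xi=1,\zeta=-1$ (or $\xi=\zeta$ non-real) is strictly smaller. For $a=0$ the bound $2^{m^2}$ is beaten by the factor with $\zeta=\xi$, which gives $|{-2\xi}|=2$ only if equality holds, so I will use the factor $\zeta=-\xi$ instead, giving $0$; but that is excluded by positivity. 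I will pick a factor like $\xi=1$, $\zeta=\zeta_m\ne\pm1$ (possible for $m\ge3$) and treat $m\le2$ by direct computation.

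\emph{Positivity.} This is the main obstacle: one must show that $\tilde\Psi_{m;a}(X)$ and $\Psi_m(a-X)$ share no complex root. This is exactly what the definition is designed for. The common roots of $\Psi_m(X)$ and $\Psi_m(a-X)$ are the roots of $\Upsilon_{m;a}$, which Lemma \ref{L2.5}(7)--(11) identifies as $1$ for $a=2$; $-1$ for $a=-2$, $m$ even; the primitive sixth roots for $a=1$; the primitive cube roots for $a=-1$; and all of them for $a=0$, $m$ even. Each of these roots is simple in $\Psi_m$, and by Lemma \ref{L11} it divides the removed factor ($X-1$, $X+1$, $X^6-1$, $X^3-1$, or all of $\Psi_m$). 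Since $\Psi_m$ is separable, the roots of $\tilde\Psi_{m;a}$ avoid them, so the resultant is nonzero. Combined with Theorem \ref{TH1}(3) in the case where the resultant equals $\Delta_{m;a}$, this yields part (1).

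\emph{Part (2).} Every prime divisor of $\tilde\Delta_{m;a}$ is at least $2$, and if there are $k$ distinct prime divisors, then $\tilde\Delta_{m;a}\ge 2^k$. To get base $3$, I will show that $2$ contributes harmlessly: either use $\prod p_i\ge 2\cdot3^{k-1}$ and the strict bound from part (1), giving $3^{k-1}\cdot2<(|a|+2)^{m^2}$; this yields $k<1+m^2\log_3(|a|+2)-\log_3 2$, which is at most $m^2\log_3(|a|+2)$ after an integrality argument using $m\ge2$, because $m^2\log_3(|a|+2)\ge4\log_3 2>1$. I expect this last integer bookkeeping to need care but no new ideas.
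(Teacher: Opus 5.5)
Your argument for part (1) is essentially the paper's. You write $\tilde\Delta_{m;a}$ as a product over the roots of $\tilde\Psi_{m;a}$, which are all $m$-th roots of unity, and you get nonvanishing from Lemmas \ref{L2.5} and \ref{L11}. You bound the linear factors $|a-\xi-\zeta|$ by $|a|+2$ where the paper bounds $|(a-\xi)^m-1|$ by $(|a|+1)^m+1$, but that difference is inessential.

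The genuine gap is in part (2). Put $L:=m^2\log_3(|a|+2)$. Your estimate $\prod p_i\ge 2\cdot 3^{k-1}$ together with $\tilde\Delta_{m;a}<3^L$ only gives $k<L+1-\log_3 2\approx L+0.37$. No integrality argument upgrades this to $k\le L$: whenever the fractional part of $L$ exceeds $\log_3 2$, the integer $\lceil L\rceil>L$ still satisfies your inequality. For example, $(m,a)=(3,0)$ gives $L=9\log_3 2\approx 5.68$, and $k=6$ is not excluded. The fact that $L>1$ does not help here. What is missing is the sharper product bound the paper uses: $k\ge 3$ distinct primes have product at least $2\cdot 3\cdot 5\cdot 7^{k-3}>3^k$, because $30>27$. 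Since $L\ge 4\log_3 2>2$ for $m\ge 2$, assuming $k\ge L$ forces $k\ge 3$. Then $\tilde\Delta_{m;a}>3^k\ge 3^L=(|a|+2)^{m^2}$, a contradiction. This uses only the non-strict upper bound from part (1).

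Separately, the strict inequality in part (1) is false for $m=1$ and $a\le 0$, so your plan to ``treat $m\le 2$ by direct computation'' breaks down there. In that case $\tilde\Psi_{1;a}=X-1$ and $\tilde\Delta_{1;a}=|a-2|=|a|+2=(|a|+2)^{1^2}$; for instance $\tilde\Delta_{1;0}=2$. The paper's chain of inequalities likewise only yields $\le$, which is all part (2) needs. For $m\ge 2$, strictness needs no case analysis. Your own equality analysis shows that $|a-\xi-\zeta|=|a|+2$ forces $\xi=\zeta$. So take any root $\xi$ of $\tilde\Psi_{m;a}$ (if there is none, then $\tilde\Delta_{m;a}=1$) and any $m$-th root of unity $\zeta\neq\xi$; that factor is strictly smaller. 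Your suggested choice $\xi=1$ is not always available: for example, $\tilde\Psi_{m;2}(1)=m\neq 0$.
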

\begin{proof} For each pair $(m, a)\in (\mathbb{N}\times \mathbb{Z})\setminus (2\mathbb N\times\{0\})$ we define a subset of $\llbracket1, m\rrbracket$ as follows:
$$
\mathbb J_{m;a} := \begin{cases} \llbracket1, m\rrbracket \quad\quad\quad\quad\;\;\; {\rm if} \quad 2\nmid m, \: a\in\{0,-2\} \quad {\rm or }\quad 6\nmid m, \: a=1 \quad {\rm or }\quad 3\nmid m, \: a=-1\\
	 \llbracket1, m\rrbracket\setminus (\frac{m}{6}\mathbb{N}) \quad\;\, {\rm if} \quad 6\mid m, \quad a=1\\
	 \llbracket1, m\rrbracket\setminus (\frac{m}{3}\mathbb{N}) \quad\;\, {\rm if} \quad 3\mid m, \quad a=-1\\
	 \llbracket1, m-1\rrbracket \quad\quad\quad\, {\rm if} \quad a=2\\
	 \llbracket1, m\rrbracket\setminus\{\frac{m}{2}\} \quad\quad {\rm if} \quad 2\mid m, \quad a=-2\\
	 \llbracket1, m\rrbracket, \quad\quad\quad\quad\;\, {\rm if} \quad |a|>2. \end{cases}
$$

For $m\in 2\mathbb N$, let $\tilde\Delta_{m;0}:=1$. For $(m, a)\in\mathbb{N}\times \mathbb{Z}$ we have:
$$
\tilde\Delta_{m;a} = |\underset{i\in \mathbb J_{m;a}}{\prod} \bigl((a-\zeta_m^i)^m-1\bigr)|\leq \underset{i\in \mathbb J_{m;a}}{\prod} |((a-\zeta_m^i)^m-1)|\leq \bigl((|a|+1)^m+1\bigr)^{|\mathbb J_{m;a}|}\leq (|a|+2)^{m^2}.$$
Thus, for part (1) it suffices to show that $\tilde\Delta_{m;a}\neq 0$. 

If $m$ is odd, then Lemma \ref{L2.5}(11) implies that $\tilde\Delta_{m;0}\neq 0$.

If $a=1$, then, as $X^2-X+1 \mid X^6-1$, Lemmas \ref{L2.5}(7) and \ref{L11}(4) imply that $\tilde\Delta_{m;1}\neq 0$.

If $a=-1$, then, as $X^2+X+1 \mid X^3-1$, Lemmas \ref{L2.5}(8) and \ref{L11}(5) imply that $\tilde\Delta_{m;-1}\neq 0$.

If $a=2$, then Lemmas \ref{L2.5}(9) and \ref{L11}(2) imply that $\tilde\Delta_{m; 2}\neq 0$.

If $a=-2$, then Lemmas \ref{L2.5}(10) and \ref{L11}(3) imply that $\tilde\Delta_{m; -2}\neq 0$. So part (1) holds.

For part (2), we note that $m^2\log_3(|a|+2)\ge m^2\log_3(2)\ge 4\log_3(2)>2$. For each $l\in\mathbb N\setminus\{1,2\}$, the product of the smallest $l$ primes is greater than $3^l$. Thus, if $\tilde\Delta_{m;a}$ has at least $m^2\log_3(|a|+2)$ distinct prime divisors, then $\tilde\Delta_{m;a}>3^{m^2\log_3(|a|+2)}=(|a|+2)^{m^2}$, which contradicts part (1). So part (2) holds.
\end{proof}

\begin{Le}\label{L13} Let $m\in \mathbb N$ and let $q$ be a prime number. Suppose there exists an element $\alpha\in\overline{\mathbb F_q}$ such that $\alpha^m=1$.
	Then the following properties hold.
	
	\medskip 
	
	{\bf (1)} 
	If $(2-\alpha)^m=1$ and $\alpha \neq 1$, then $q \mid \tilde{\Delta}_{m;2}$.
	
	\smallskip
	
	{\bf (2)} 
	If $2\mid m$, $(-2-\alpha)^m=1$ and $\alpha \neq -1$, then $q \mid \tilde{\Delta}_{m;-2}$.
	
	\smallskip
	
	{\bf (3)} 
	If $6\mid m$, $(1-\alpha)^m=1$ and $\alpha$ is not a $6$-th root of unity, then $q \mid \tilde{\Delta}_{m;1}$.
	
	\smallskip
	
	{\bf (4)} 
 If $3\mid m$, $(-1-\alpha)^m=1$ and $\alpha$ is not a $3$-rd root of unity, then $q \mid \tilde{\Delta}_{m;-1}$.
\end{Le}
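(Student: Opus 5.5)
The plan is to express $\tilde\Delta_{m;a}$, up to sign, as a product over the roots of $\tilde\Psi_{m;a}$, and then reduce modulo a prime ideal above $q$. Since $\tilde\Psi_{m;a}$ is monic in all four cases, the standard resultant identity gives
$$\pm\tilde\Delta_{m;a}=\prod_{\beta}\Psi_m(a-\beta),$$
where $\beta$ runs over the complex roots of $\tilde\Psi_{m;a}$ counted with multiplicity. By Lemma \ref{L11}(2)--(5) these roots are exactly the $m$-th roots of unity $\zeta_m^i$ with $i\in\mathbb J_{m;a}$, as in the proof of Lemma \ref{L12}. More precisely:
\begin{itemize}
\item for $a=2$, they are the $m$-th roots of unity other than $1$;
\item for $a=-2$, they are those other than $-1$;
\item for $a=1$, they are those that are not $6$-th roots of unity;
\item for $a=-1$, they are those that are not $3$-rd roots of unity.
\end{itemize}
So the resultant lies in $\mathbb Z$ and equals $\pm\prod_{i\in\mathbb J_{m;a}}\bigl((a-\zeta_m^i)^m-1\bigr)$ in $\mathcal Z_m$.

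Next, choose a ring homomorphism $\rho:\mathcal Z_m\to\overline{\mathbb F_q}$. Its image contains $\rho(\zeta_m)$, an $m$-th root of unity. If $q\nmid m$, then $\rho(\zeta_m)$ is a primitive $m$-th root of unity, since $\Psi_m$ is separable modulo $q$. In that case $\alpha=\rho(\zeta_m)^{i_0}$ for a unique $i_0\in\llbracket1,m\rrbracket$. The hypothesis on $\alpha$ ($\alpha\neq1$, $\alpha\neq-1$, not a $6$-th root, not a $3$-rd root respectively) should force $i_0\in\mathbb J_{m;a}$. Then $\rho$ applied to the factor indexed by $i_0$ gives $(a-\alpha)^m-1=0$. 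Hence $\rho(\pm\tilde\Delta_{m;a})=0$, and since $\rho$ restricted to $\mathbb Z$ is reduction modulo $q$, we get $q\mid\tilde\Delta_{m;a}$.

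The main obstacle is the step $i_0\in\mathbb J_{m;a}$, i.e.\ checking that the excluded indices map exactly onto the excluded $\alpha$. For example, in case (3) we need: $\rho(\zeta_m)^{i}$ is a $6$-th root of unity iff $\frac m6\mid i$. This holds when $\rho(\zeta_m)$ is primitive, because the map $i\mapsto\rho(\zeta_m)^i$ is then injective on $\llbracket1,m\rrbracket$. The case $q\mid m$ needs separate handling, since roots of unity then collapse and injectivity fails.

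To avoid working with roots there, I would instead argue directly in $\overline{\mathbb F_q}[X]$. The polynomials $\tilde\Psi_{m;a}$ and $\Psi_m(a-X)$ reduced modulo $q$ have a common root if $\tilde\Psi_{m;a}(\alpha)=0$ in $\overline{\mathbb F_q}$, and a common root forces the resultant to vanish modulo $q$. So it suffices to show $\tilde\Psi_{m;a}(\alpha)=0$. By Lemma \ref{L11}, for $a=2$ we have $0=\alpha^m-1=(\alpha-1)\tilde\Psi_{m;2}(\alpha)$, and $\alpha\neq1$ gives $\tilde\Psi_{m;2}(\alpha)=0$. Similarly, for $a=-2$ divide by $\alpha+1\neq0$, and for $a=1$ (resp.\ $a=-1$) divide by $\alpha^6-1\neq0$ (resp.\ $\alpha^3-1\neq0$). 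This route is uniform in $q$ and is the one I would actually write. The only remaining point is that the resultant of the reductions equals the reduction of the resultant, which holds because $\tilde\Psi_{m;a}$ is monic and the degree of $\Psi_m(a-X)$ is preserved modulo $q$ (its leading coefficient is $\pm1$).
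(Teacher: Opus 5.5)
Your final, direct route is correct and is essentially the paper's proof. You use the factorizations of Lemma \ref{L11}(2)--(5) and the hypothesis on $\alpha$ to get $\tilde\Psi_{m;a}(\alpha)=0$, so $\alpha$ is a common root in $\overline{\mathbb F_q}$ of the reductions of $\tilde\Psi_{m;a}(X)$ and $\Psi_m(a-X)$, whence $q\mid\tilde\Delta_{m;a}$. Your remark that the resultant commutes with reduction modulo $q$ (since $\tilde\Psi_{m;a}$ is monic and $\Psi_m(a-X)$ has leading coefficient $\pm1$) supplies a detail the paper leaves implicit, and the preliminary detour through $\rho:\mathcal Z_m\to\overline{\mathbb F_q}$ is unnecessary.
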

\begin{proof} For part (1) we use Lemma \ref{L11}(2) which states that $\Psi_m(X)=(X-1)\tilde{\Psi}_{m;2}(X)$. As $\alpha \neq 1$ and $\Psi_m(\alpha)=0$, we must have that $\tilde{\Psi}_{m;2}(\alpha)=0$.
	It follows that $\alpha$ is a common root of $\tilde{\Psi}_{m;2}(X)$ and $\Psi_m(2-X)$, hence $q\mid\tilde\Delta_{m;2}$. So part (1) holds. 
	
	Parts (2) to (4) are proved similarly using Lemma \ref{L11}(3) to (5) (respectively).
	\end{proof}

\begin{Pro}\label{P10}
	For $(m,a)\in \mathbb{N}\times\mathbb Z$ we define a subset of prime numbers as follows: 
	$$
	P_{m;a}:=\{p \: | \: p\: \text{is a prime and} \; p\mid\tilde\Delta_{m;a}\} .
	$$
	Let $N\in \mathbb{N}$ and let $p$ and $q$ be two distinct primes dividing $N$. Then $N$ does not satisfy the property $(\natural)$ with respect to $(p,q,m;a)$ provided one of the following conditions holds. 
	
	\medskip
	{\bf (1)} 
	We have $|a|\ge 3$, $\Delta_{m;a}\neq 0$, and $q\notin P_{m;a}$. 
	
	\smallskip
	
	{\bf (2)} 
	We have $p>2$, $2\mid m$, and $a=0$. 
	
	\smallskip
	
	{\bf (3)}
	We have $q\notin P_{m;2}$ and $a=2$.
	
	\smallskip 
	
	{\bf (4)}
	We have $m\in 2\mathbb N$, $q\notin P_{m;-2}$, and $a=-2$.
	
	\smallskip 
	
	{\bf (5)} 
	We have $6 \mid m$, $q\notin P_{m;1} \cup \{7\}$, and $a=1$. 
  
\smallskip 
{\bf (6)} We have $3 \mid m$, $q\notin P_{m;-1}$, and $a=-1$.\end{Pro}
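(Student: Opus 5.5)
We argue by contradiction in every case. Assume $N$ has property $(\natural)$ with respect to $(p,q,m;a)$. Then $p\nmid m$, and there exist $m$-th roots of unity $X_1,Y_1\in\overline{\mathbb F_q}$ with $X_1^p+Y_1^p=a$ and $X_1+Y_1\neq a$.

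Put $\alpha:=X_1^p$ and $\beta:=Y_1^p$. These are again $m$-th roots of unity with $\alpha+\beta=a$, so $\alpha^m=1$ and $(a-\alpha)^m=1$. Since $\gcd(p,m)=1$, raising to the $p$-th power is a bijection on the $m$-th roots of unity, so $X_1$ and $Y_1$ are determined by $\alpha$ and $\beta$.

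The key observation is that if $(\alpha,\beta)$ is one of the "generic" solutions present in characteristic $0$, then $X_1+Y_1=a$ is forced. Each case therefore reduces to showing that a non-generic $\alpha$ makes $q$ divide $\tilde\Delta_{m;a}$.

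\textbf{Case (1).} Here $\tilde\Psi_{m;a}=\Psi_m$ by Lemma \ref{L11}(1). The element $\alpha$ is a common root of $\Psi_m(X)$ and $\Psi_m(a-X)$ in $\overline{\mathbb F_q}$. Hence $q$ divides the resultant $\Res(\Psi_m(X),\Psi_m(a-X))=\pm\tilde\Delta_{m;a}$, so $q\in P_{m;a}$, a contradiction.

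\textbf{Case (2).} Here $a=0$, so $X_1^p=-Y_1^p$, and since $p$ is odd this gives $(X_1/Y_1)^p=-1$. The element $-1$ is an $m$-th root of unity because $m$ is even. Moreover, $u\mapsto u^p$ is a bijection on the $m$-th roots of unity, and $(-1)^p=-1$. It follows that $X_1/Y_1=-1$, i.e.\ $X_1+Y_1=0=a$, a contradiction.

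\textbf{Case (3).} Here $a=2$. If $\alpha\neq1$, Lemma \ref{L13}(1) gives $q\mid\tilde\Delta_{m;2}$, a contradiction. So $\alpha=1$, hence $\beta=1$, and injectivity of $u\mapsto u^p$ gives $X_1=Y_1=1$ and $X_1+Y_1=2$, a contradiction. If $q=2$, then $\alpha+\beta=0$ forces $\alpha=\beta$; this case still needs a separate check (or it is covered by $\Psi_m(X)$ sharing roots), and I would handle it by noting that $2\mid \tilde\Delta_{m;2}$ or showing it directly.

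\textbf{Case (4).} This is the same as case (3), using Lemma \ref{L13}(2). Here $\alpha=-1$ forces $\beta=-1$, and since $(-1)^p=-1$ for $p$ odd we get $X_1=Y_1=-1$. For $p=2$ the hypothesis $p\nmid m$ fails because $m$ is even.

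\textbf{Cases (5) and (6).} These are the main obstacle. By Lemma \ref{L13}(3) (resp.\ (4)), $q\notin P_{m;1}$ forces $\alpha$ to be a $6$-th (resp.\ $3$-rd) root of unity, and similarly for $\beta=a-\alpha$. In characteristic $0$ the only such pairs are $\{\zeta_6,\zeta_6^5\}$ (resp.\ $\{\zeta_3,\zeta_3^2\}$).

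In characteristic $q$, however, additional coincidences can occur. One must list all pairs of $6$-th roots of unity summing to $1$ in $\overline{\mathbb F_q}$. The candidate pairs are $1+\omega$ with $\omega^6=1$, and the identities to rule out are of the form $1+1=1$, $(-1)+(-1)=1$ (i.e.\ $q=3$), $\zeta_3+\zeta_3=1$, and $-1+\zeta_3=1$. For instance, $-1+\zeta_3=1$ means $\zeta_3=2$, which needs $8\equiv1\pmod q$, i.e.\ $q=7$. This explains the exclusion of $7$. The cases $q=2,3$ need care and may already be contained in $P_{m;1}$ or be impossible since $p\nmid m$ and $6\mid m$ forces $p\geq5$.

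Once the only surviving pairs are $(\alpha,\beta)=(\zeta,\zeta^{-1})$ with $\zeta$ a primitive $6$-th root of unity in $\overline{\mathbb F_q}$ satisfying $\zeta^2-\zeta+1=0$, we finish as follows. Since $p\geq5$ is coprime to $6$, we have $p\equiv\pm1\pmod 6$. Then $X_1=\alpha^{p'}$ and $Y_1=\beta^{p'}$ with $p'\equiv p^{-1}\equiv\pm1\pmod6$, so $\{X_1,Y_1\}=\{\zeta,\zeta^{-1}\}$ and $X_1+Y_1=1$, a contradiction. Case (6) is analogous with $3$-rd roots of unity, using $p\equiv\pm1\pmod 3$ and swapping $\zeta_3\leftrightarrow\zeta_3^2$. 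The small-characteristic enumeration, checking which $q$ allow extra sums, is the step where I expect most of the work.
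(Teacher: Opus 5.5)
You follow the paper's own reduction. Your $\alpha=X_1^p$, together with the bijectivity of $u\mapsto u^p$ on $m$-th roots of unity, plays the role of the paper's $r$ and $r'$. Parts (1)--(4) are fine. In (3) your worry about $q=2$ is unnecessary: Lemma \ref{L13}(1) holds in every characteristic, and the branch $\alpha=1$ gives $X_1=Y_1=1$, hence $X_1+Y_1=2=a$, whatever $q$ is.

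The gap is in (5) and (6), and it is exactly where the content of those parts lies.

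\textbf{What is missing.} You never carry out the enumeration of pairs of sixth (resp.\ third) roots of unity with sum $1$ (resp.\ $-1$) in characteristic $q$.

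\textbf{Why your shortcuts fail.} Neither of your two ways of disposing of $q\in\{2,3\}$ works.
\begin{itemize}
\item The fact that $p\ge 5$ says nothing about $q$.
\item $q$ need not lie in $P_{m;1}$. For $m=6$ one has $\tilde\Psi_{6;1}=1$, so $\tilde\Delta_{6;1}=1$ and $P_{6;1}=\emptyset$; similarly $P_{3;-1}=\emptyset$.
\item Your stated target is false as stated in characteristics $2$ and $3$. You claim the only surviving pairs are $(\zeta,\zeta^{-1})$ with $\zeta$ a primitive sixth root of unity. But there are no primitive sixth roots there, while $(\omega,\omega^2)$, resp.\ $(-1,-1)$, do sum to $1$.
\end{itemize}

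\textbf{The right target.} Aim for $\alpha\beta=1$. Then $\alpha^{-1}+\beta^{-1}=(\alpha+\beta)/(\alpha\beta)=\alpha+\beta$, which is exactly what your final step for $p\equiv -1$ needs.

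\textbf{Checking it for (5).} Note $\alpha\neq 1$.
\begin{itemize}
\item If $\alpha=-1$, then $\beta=2$, and $2^6=1$ forces $q\in\{3,7\}$. When $q=3$, $\beta=-1$.
\item Otherwise $\alpha=\pm\omega$ with $\omega^2+\omega+1=0$ and $\omega\neq1$.
\item $\alpha=-\omega$ gives $\beta=-\omega^2$, so $\alpha\beta=1$.
\item $\alpha=\omega$ gives $(1-\omega)^2=-3\omega$, so $\beta^6=-27=1$ forces $q\in\{2,7\}$. When $q=2$, $\beta=\omega^2$.
\end{itemize}
So $\alpha\beta=1$ unless $q=7$, where the pairs $(-1,2)$ and $(4,4)$ occur. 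This is the real reason $7$ is excluded.

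\textbf{Checking it for (6).}
\begin{itemize}
\item $\alpha=1$ forces $\beta=-2$ and $-8=1$, so $q=3$ and $\beta=1$.
\item $\alpha=\omega\neq1$ gives $\beta=\omega^2$.
\end{itemize}
So always $\alpha\beta=1$, and no prime needs to be excluded.

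With this inserted your proof is complete. It is then essentially the paper's, which runs the same case analysis directly on $X_1^{-1}+Y_1^{-1}=1$, i.e.\ on $X_1+Y_1=X_1Y_1$.
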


\begin{proof} We can assume that $\gcd(p,m)=1$. Let $r\in \llbracket1, m-1\rrbracket$ be such that $m\mid p-r$. Let $r'\in \mathbb{N}$ be such that $m\mid rr'-1$. It suffices to show that the assumption that there exists a pair $(X_1, Y_1)$ of $m$-th roots of unity in $\overline{\mathbb F_q}$ such that $X_1^r+Y_1^r=a$ and $X_1+Y_1\neq a$ leads to a contradiction, with $a$ in parts (2), (3), (4), (5), and (6) being $0$, $2$, $-2$, $1$, and $-1$ (respectively). 

For part (1), as $X_1^r$ and $a-X_1^r$ are $m$-th roots of unity, it follows that $q\mid\Delta_{m;a}$. Thus $q \mid |\Delta_{m;a}|=|\tilde\Delta_{m;a}|$, a contradiction. So part (1) holds.
	
For part (2), as $r$ is odd and $m$ is even, $r'$ must be odd. Then, as $X_1^r=-Y_1^r$, we have $X_1^{rr'}=-Y_1^{rr'}$. So $X_1=-Y_1$, a contradiction.
	
	For part (3), as $X_1^r$ and $2-X_1^r$ are $m$-th roots of unity and $q\nmid\tilde\Delta_{m;2}$, we have $X_1^{r}=1$ by Lemma \ref{L13}. By symmetry, we also have $Y_1^r=1$. As $\gcd(r, m)=1$, we derive that $X_1=Y_1=1$, a contradiction to $X_1+Y_1\neq 2$.
	
	For part (4), as $X_1^r$ and $-2-X_1^r$ are $m$-th roots of unity and $q\nmid\tilde\Delta_{m;-2}$, we have $X_1^{r}=-1$ by Lemma \ref{L13}. By symmetry, we also have $Y_1^r=-1$. As $r$ is odd and $m$ is even, $r'$ must be odd. Then $X_1^{rr'}=-1$ and $Y_1^{rr'}=-1$. So $X_1=Y_1=-1$, a contradiction 
	to $X_1+Y_1\neq -2$.
	
 For part (5), as $X_1^r$ and $1-X_1^r$ are $m$-th roots of unity and $q\nmid\tilde\Delta_{m;1}$, we have $X_1^{6r}=1$ by Lemma \ref{L13}. As $X_1^{6r}=1=X_1^{6m}$ and $\gcd (r, m)=1$, we get that $X_1^6=1$. By symmetry, we also have $Y_1^6=1$. If $r\equiv 1 \pmod{6}$, then from $X_1^r+Y_1^r =1$ together with $X_1^6=Y_1^6=1$,
 we get that $X_1+Y_1 =1$, a contradiction. 
 
 Assume now that $r \not\equiv 1 \pmod{6}$. As $6 \mid m$ and $\gcd (r, m)=1$, we get that $r\equiv 5 \pmod{6}$. Thus $X_1^5+Y_1^5=1$, hence $X_1^{-1}+Y_1^{-1}=1$. Multiplying this by $X_1Y_1$, we get $X_1+Y_1=X_1Y_1$, in particular $X_1Y_1\neq 1$.
 If $X_1=1$ or $Y_1=1$, then $X_1+Y_1=X_1Y_1$ implies that $1=0$, a contradiction. If $X_1=-1$, then $Y_1^{-1}=1-X_1^{-1}=1+1=2$. 
 Then we get that $2^6=1$ in $\mathbb{F}_q$, hence $q\mid 63$. As $q\neq 7$, we must have $q=3$. So $Y_1=2^{-1}=2$ and $X_1+Y_1=-1+2=1$, a contradiction. So $X_1\neq -1$.
 By symmetry $Y_1\neq -1$. We also have that $X_1\neq Y_1$, because if $X_1=Y_1$, then the equation $X_1^{-1}+Y_1^{-1}=1$ implies that $X_1=2$. Then $2^6=1$, hence $q=3$,
 and we get $X_1+Y_1=2+2=1$, a contradiction. Thus, $1$, $X_1$, $Y_1$, and $X_1Y_1$ are $4$ distinct $6$-th roots of unity. Therefore, it is
 not possible both $X_1$ and $Y_1$ to be $3$-rd roots of unity. Without loss of generality, we can assume that $X_1^3\neq 1$. As $X_1\neq -1$, $X_1$ must be a primitive $6$-th
 root of unity. Then $X_1^2-X_1+1=0$, hence $X_1^3=-1$. So $X_1^2 =-X_1^{-1}$. Thus we have $X_1=X_1^2+1=1-X_1^{-1}=Y_1^{-1}$. We compute $1= X_1^{-1}+Y_1^{-1}=Y_1+X_1$, a contradiction. So part (5) holds.
	
	For part (6), as $X_1^r$ and $-1-X_1^r$ are $m$-th roots of unity and $q\nmid\tilde\Delta_{m;-1}$, we have $X_1^{3r}=1$ by Lemma \ref{L13}. As $X_1^{3r}=1=X_1^{3m}$ and $\gcd (r, m)=1$, we get that $X_1^3=1$. By symmetry, we also have $Y_1^3=1$. Then $X_1^r+Y_1^r=-1$, $X_1+Y_1\neq -1$, and $\gcd(3, r)=1$ imply that $X_1^2+Y_1^2=-1$. 
	Suppose that $X_1=1$. Then $Y_1^2=-1-X_1^2=-2$, so in $\overline{\mathbb{F}_q}$ we have that $1=Y_1^6=-2^3=-8$. This implies that $q=3$. 
	As $Y_1^3=1$ and $3$ does not divide the order of the multiplicative group of $\mathbb{F}_3(Y_1)$, we get that $Y_1=1$. Then $X_1+Y_1=1+1=-1$, a contradiction.
	Thus $X_1\neq 1$. By symmetry, we also have that $Y_1\neq 1$. In this case both $X_1$ and $Y_1$ are primitive $3$-rd roots of unity. So $X_1$ and $Y_1$ are solutions
	to the equation $X^2+X+1=0$. If $X_1\neq Y_1$, by the Vieta's formula $X_1+Y_1=-1$, a contradiction. If $X_1=Y_1$, then we have that $-1=X_1^2+Y_1^2=2X_1^2$. 
	Multiplying $X_1^2+X_1+1=0$ by $2$ and substituting $-1$ for $2X_1^2$, we get that $2X_1 =-1$. By cubing this equation we get that in $\mathbb{F}_q$ we have $8=-1$,
	so $q=3$. Then $2X_1=-1$ implies that $X_1=1$, a contradiction. So part (6) holds.\end{proof}

\begin{Rem}\normalfont\label{R5}
	We have $2^5+2^5=64\equiv 1 \pmod{7}$ and $2+2 \not\equiv 1 \pmod{7}$. Thus, if $p$ is a prime such that $p\equiv 5 \pmod 6$ and $N\in 7p\mathbb N$, then $N$ has property $(\natural)$ with respect to $(p, 7, 6;1)$. 
\end{Rem}

\begin{Le}\label{L14}
	Let $(N,m,a)\in \mathbb{T}\times\mathbb{N}\times \mathbb{Z}$ and $\Psi^0_{m; a}(X) :=\Psi_m(X)/\Upsilon_{m; a}(X)$ with $\Upsilon_{m; a}(X)$ as in Lemma \ref{L2.5}. Then the following properties hold for 
	$$\Delta^0_{m;a} :=|\Res\bigl(\Psi^0_{m;a}(X),\Psi^0_{m;a}(a-X)\bigr)|.$$ 
	
	{\bf (1)} 
	We have $0< \Delta^0_{m;a}<(a+2)^{m^2}$.
	
	\smallskip
	{\bf (2)} 
	If $m\ge 2$, then the number of prime divisors of $\Delta^0_{m;a}$ is bounded by $m^2\log_3(|a|+2)$.
	
	\smallskip
	{\bf (3)} 
	If $N$ has property $(\sharp_{m;a})$, then $\gcd (N, \Delta^0_{m;a})>1$. 
\end{Le}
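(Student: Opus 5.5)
The plan is to handle $\Psi^0_{m;a}$ the same way Lemma \ref{L12} handled $\tilde\Psi_{m;a}$: write the resultant as a product over roots, bound each factor, and then count prime divisors.

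\textbf{Part (1).} By construction $\Psi^0_{m;a}(X)=\Psi_m(X)/\Upsilon_{m;a}(X)$ is monic with integer coefficients, since $\Upsilon_{m;a}$ is a monic divisor of $\Psi_m$. Its complex roots are the $\zeta_m^i$ that are \emph{not} common roots of $\Psi_m(X)$ and $\Psi_m(a-X)$.

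\emph{Nonvanishing.} The roots of $\Psi^0_{m;a}(a-X)$ are the $a-\zeta_m^j$ with $\zeta_m^j$ a root of $\Psi^0_{m;a}$. Suppose $\zeta_m^i=a-\zeta_m^j$ for two such roots. Then $\zeta_m^i$ is a root of both $\Psi_m(X)$ and $\Psi_m(a-X)$, so it is a root of $\Upsilon_{m;a}$. This is impossible, because $\Psi_m$ is separable over $\mathbb Q$ and therefore $\gcd(\Psi^0_{m;a},\Upsilon_{m;a})=1$. Hence the resultant is nonzero, and $\Delta^0_{m;a}\ge 1$.

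\emph{Upper bound.} Up to sign, the resultant equals $\prod_{i,j}(a-\zeta_m^i-\zeta_m^j)$, taken over pairs of roots of $\Psi^0_{m;a}$. This is a product of at most $m^2$ factors, each of absolute value at most $|a|+2$. This gives $\Delta^0_{m;a}\le(|a|+2)^{m^2}$; the statement writes $a+2$, which I read as $|a|+2$.

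For strict inequality there are two cases. If some factor is missing, i.e.\ $\deg\Psi^0_{m;a}<m$, we have fewer than $m^2$ factors and the bound is strict for $|a|+2\ge2$. Otherwise the pair $i=j=0$ contributes the factor $|a-2|<|a|+2$ whenever $a\ne0$. When $a=0$ and $m$ is even, $\Upsilon_{m;0}=\Psi_m$, so $\Psi^0=1$ and $\Delta^0=1<2^{m^2}$. When $a=0$ and $m$ is odd, the factor $|{-2}\zeta|=2$ appears for $i=j$; here I will instead use the exact product from Example \ref{EX1} to handle this edge case. These small cases are the only fussy point in part (1).

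\textbf{Part (2).} This follows verbatim from the argument in Lemma \ref{L12}(2). For $m\ge2$ the bound $m^2\log_3(|a|+2)$ exceeds $2$. The product of the first $l\ge3$ primes exceeds $3^l$, so having more than that many prime divisors would contradict part (1).

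\textbf{Part (3).} This is the substantive step. I argue by contraposition: assume $\gcd(N,\Delta^0_{m;a})=1$ and produce a monic generator of the ideal $\bigl(\Psi_m(X),\Psi_m(a-X)\bigr)$ in $(\mathbb Z/N\mathbb Z)[X]$. The natural candidate is the monic polynomial $\Upsilon_{m;a}$, which divides both $\Psi_m(X)$ and $\Psi_m(a-X)$ in $\mathbb Z[X]$. Write
\[
\Psi_m(X)=\Upsilon_{m;a}(X)\,\Psi^0_{m;a}(X),\qquad \Psi_m(a-X)=\Upsilon_{m;a}(X)\,F(X).
\]

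\emph{Identifying the cofactor.} I expect $F(X)=\pm\Psi^0_{m;a}(a-X)$, using that $\Upsilon_{m;a}(a-X)=\pm\Upsilon_{m;a}(X)$. This symmetry holds because the set of common roots is stable under $x\mapsto a-x$, which is visible from the explicit list in Lemma \ref{L2.5}(7)--(11).

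\emph{Using the resultant.} Since the resultant of $\Psi^0_{m;a}(X)$ and $\Psi^0_{m;a}(a-X)$ is a unit modulo $N$, there exist $u,v\in(\mathbb Z/N\mathbb Z)[X]$ with $u\Psi^0_{m;a}(X)+v\,\Psi^0_{m;a}(a-X)=1$; the resultant always lies in the ideal generated by the two polynomials over $\mathbb Z$. Multiplying through by $\Upsilon_{m;a}$ shows that $\Upsilon_{m;a}$ lies in the ideal. Hence the ideal equals $(\Upsilon_{m;a})$, which has a monic generator, so $N$ does not have property $(\sharp_{m;a})$.

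The main obstacle is verifying the symmetry $\Upsilon_{m;a}(a-X)=\pm\Upsilon_{m;a}(X)$ case by case: $X-1$ for $a=2$, $X+1$ for $a=-2$, $X^2\mp X+1$ for $a=\pm1$, and $\Psi_m$ for $a=0$ with $m$ even.
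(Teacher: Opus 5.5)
Your parts (2) and (3) follow the paper's proof. For (3), the paper also writes $\Psi_m(a-X)=\pm\Upsilon_{m;a}(X)\Psi^0_{m;a}(a-X)$. It then uses that $\Delta^0_{m;a}$ lies in $(\Psi^0_{m;a}(X),\Psi^0_{m;a}(a-X))$ and is a unit modulo $N$, and concludes that the ideal is $(\Upsilon_{m;a},N)$ with $\Upsilon_{m;a}$ monic.

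The symmetry you flag as the main obstacle needs no case check. If $x^m=1=(a-x)^m$, then $a-x$ satisfies the same two equations. So $x\mapsto a-x$ permutes the simple roots of the monic separable $\Upsilon_{m;a}$, which gives $\Upsilon_{m;a}(a-X)=(-1)^{\deg\Upsilon_{m;a}}\Upsilon_{m;a}(X)$ at once.

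For the upper bound in (1), your double product over pairs of roots of $\Psi^0_{m;a}$ is more direct than the paper's route. The paper bounds $|\Res(\Psi^0_{m;a}(X),\Psi_m(a-X))|$ by $\prod_k((|a|+1)^m+1)$ and divides out the nonzero integer $\Res(\Psi^0_{m;a},\Upsilon_{m;a})$. Both give $\Delta^0_{m;a}\le(|a|+2)^{m^2}$, so reading $a+2$ as $|a|+2$ is right. The paper's proof stops at this non-strict bound.

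The step that fails is your strictness argument. You claim the pair $i=j=0$ gives $|a-2|<|a|+2$ whenever $a\neq 0$, but for $a<0$ we have $|a-2|=|a|+2$.

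For $m\ge 2$ this is easy to repair. Since $|a-\zeta_m^i-\zeta_m^j|\le |a|+|\zeta_m^i+\zeta_m^j|$, equality with $|a|+2$ forces $\zeta_m^i=\zeta_m^j$. So when $\Psi^0_{m;a}=\Psi_m$, the pair $(i,j)=(0,1)$ gives a strictly smaller factor.

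For $m=1$ and $a\le 0$ no repair is possible. There $\Upsilon_{1;a}=1$ and $\Delta^0_{1;a}=|a-2|=|a|+2$; for instance $\Delta^0_{1;0}=2$ and $\Delta^0_{1;-1}=3$. Your appeal to Example \ref{EX1} for $a=0$ and $m$ odd also gives equality $2^m=2^{m^2}$ at $m=1$. So the strict inequality in (1) is false in general, and you should prove only $\Delta^0_{m;a}\le(|a|+2)^{m^2}$.

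That non-strict bound is all part (2) needs. Having $k\ge m^2\log_3(|a|+2)>2$ distinct prime divisors would force $\Delta^0_{m;a}>3^k\ge(|a|+2)^{m^2}$, a contradiction.
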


\begin{proof} For part (1), as $\gcd\bigl(\Psi_m(X), \Psi_m(a-X)\bigr)=\Upsilon_{m;a}(X)$, $\Psi^0_{m;a}(X)$ and $\Psi^0_{m;a}(a-X)$ have no common roots and $\Upsilon_{m;a}(a-X)=(-1)^{\deg(\Upsilon_{m;a})}\Upsilon_{m;a}(X)$. Thus $0< \Delta^0_{m;a}$. 
	
	We have:
	$$
	\Res\bigl(\Psi^0_{m;a}(X), \Psi_m(a-X)\bigr )=\Res\bigl(\Psi^0_{m;a}(X),\Psi^0_{m;a}(a-X)\bigr)\Res\bigl(\Psi^0_{m;a}(X),(-1)^{\deg(\Upsilon_{m;a})}\Upsilon_{m;a}(X)\bigr).
	$$
	As $\Psi_m(X)=\Psi^0_{m;a}(X)\Upsilon_{m;a}(X)$ is a separable polynomial, $\Psi^0_{m;a}(X)$ and $\Upsilon_{m;a}(X)$ have no common roots in $\mathbb C$, so
	$\Res\bigl(\Psi^0_{m;a}(X),\Upsilon_{m;a}(X)\bigr)\in \mathbb{Z}\setminus\{0\}$. This implies that
	$$
	|\Res\bigl(\Psi^0_{m;a}(X),\Psi^0_{m;a}(a-X)\bigr)| \leq |\Res\bigl(\Psi^0_{m;a}(X), \Psi_m(a-X)\bigr )| .
	$$
	From this and the following estimate 
	$$
	|\Res\bigl(\Psi^0_{m;a}(X), \Psi_m(a-X)\bigr )| = 
	|\underset{k\in\llbracket1,m\rrbracket, \: \Upsilon_{m;a}(\zeta_m^k)\neq 0 }{\prod} \bigl((a-\zeta_m^k)^m-1\bigr)|\leq \underset{k\in\llbracket1,m\rrbracket}{\prod} ((|a+1)^m+1)|\leq (|a|+2)^{m^2}
	$$
we get that part (1) holds.
	
	Part (2) follows from part (1) by the same argument as in the proof of Lemma \ref{L12}(2).
	
	For part (3), it suffices to show that the assumption that $\gcd (N, \Delta^0_{m;a})=1$ leads to a contradiction. As $\Upsilon_{m;a}(a-X)=(-1)^{\deg(\Upsilon_{m;a})}\Upsilon_{m;a}(X)$, we get
	$$
	\bigl( \Psi_{m;a}(X), \Psi_{m;a}(a-X), N\bigr)= \Bigl(\Upsilon_{m;a}(X) \bigl( \Psi^0_{m;a}(X), \Psi^0_{m;a}(a-X)\bigr), N\Bigr).
	$$
	As $\Delta^0_{m;a}(X)\in \bigl( \Psi^0_{m;a}(X), \Psi^0_{m;a}(a-X)\bigr)$ and $\gcd(\Delta^0_{m;a}(X), N)=1$, we get that
	$$ \Bigl(\Upsilon_{m;a}(X) \bigl( \Psi^0_{m;a}(X), \Psi^0_{m;a}(a-X)\bigr), N\Bigr)= \bigl(\Upsilon_{m;a}(X), N\bigr),$$
a contradiction to $N$ has property $(\sharp_{m;a})$ as $\Upsilon_{m;a}(X)$ is monic.
	So part (3) holds. 
	\end{proof}

\begin{Co}\label{C5}
	Let $(c_1, c_2)\in (0, \infty)^2$. For an integer $T\ge 4$ let $\mathcal S_T$ be the non-empty set of square free semiprimes whose prime divisors are less than $T$ and we consider its subsets
 $$\mathcal{S}^1_{T, c_1, c_2}:=\{N\in \mathcal S_T|\exists (m,a)\in\mathbb N\times\mathbb Z, N\; \textup{has property}\; (\sharp_{m;a}), m<T^{c_1}, |a|<T^{c_2}\},$$ 
 $$\mathcal{S}^2_{T, c_1, c_2}:=\{N\in \mathcal S_T|\exists (m,a)\in\mathbb N\times\mathbb Z, N\; \textup{has property}\; (\natural_{m;a}), m<T^{c_1}, |a|<T^{c_2}\}.$$ 
 Then the following properties hold.
	
	\medskip
	 {\bf (1)} 
	 If $3c_1+c_2<1$, then both limits $\lim_{T\to\infty} \frac{|\mathcal S^1_{T, c_1, c_2}|}{|\mathcal S_{T}|}$ and $
	 \lim_{T\to\infty} \frac{|\mathcal S^2_{T, c_1, c_2}|}{|\mathcal S_{T}|}$ exist and are $0$.
	 
	 {\bf (2)} 
	 There exists no constant $C\in (0,\infty)$ such that each sufficiently large square free semiprime $N$ has property $(\sharp_{m;a})$ or $(\natural_{m;a})$ for a pair $(m,a)\in\mathbb N\times\mathbb Z$ with $\max(m,|a|)<(\log N)^C$.\end{Co}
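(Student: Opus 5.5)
The plan is to reduce both properties to a divisibility condition, $q\mid\Delta^0_{m;a}$ or $q\mid\tilde\Delta_{m;a}$, and then count how few semiprimes $N=pq$ with $p,q<T$ can satisfy such a condition.

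The first step is the reduction for $(\sharp_{m;a})$. By Lemma \ref{L14}(3), if $N$ has property $(\sharp_{m;a})$, then $N$ shares a prime factor with $\Delta^0_{m;a}$.

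The second step is the reduction for $(\natural_{m;a})$, via Proposition \ref{P10}. If $N=pq$ has property $(\natural)$ with respect to $(p,q,m;a)$, then $q$ (or $p$, with the roles swapped) must lie in a small exceptional set. Depending on $a$ and $m$, this set is $P_{m;a}$, or $P_{m;a}\cup\{7\}$, or it is empty. The case $a=0$ with $2\mid m$ needs one of the primes to be $2$, so I fold it in by putting $2$ into every exceptional set. For $a=0$ and $m$ odd, $\tilde\Delta_{m;0}=\Delta^0_{m;0}$ is a power of $2$ by Example \ref{EX1}.

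The case $|a|\ge3$ with $\Delta_{m;a}\neq0$ is covered by Proposition \ref{P10}(1). All remaining $|a|\ge3$ cases have $\Delta_{m;a}\neq0$ by Theorem \ref{TH1}(3.a). The cases $a\in\{\pm1,2,-2\}$ that fall outside the hypotheses of Proposition \ref{P10}(3)--(6) are those with $\Delta_{m;a}\neq0$. For these I argue directly as in part (1), since $q\mid\Delta_{m;a}=\pm\tilde\Delta_{m;a}$ by Lemma \ref{L11}(1).

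So in every case $N\in\mathcal S^1\cup\mathcal S^2$ forces some prime factor of $N$ to lie in $E_{m,a}:=P_{m;a}\cup\{\text{prime divisors of }\Delta^0_{m;a}\}\cup\{2,7\}$.

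Third, I bound the bad primes. By Lemmas \ref{L12}(2) and \ref{L14}(2), we have $|E_{m,a}|\le 2m^2\log_3(|a|+2)+2$. Taking the union over $m<T^{c_1}$ and $|a|<T^{c_2}$, the set $E_T$ of exceptional primes has size
\[
|E_T|\ll T^{c_1}\cdot T^{c_2}\cdot T^{2c_1}\log T=T^{3c_1+c_2}\log T .
\]
Every $N\in\mathcal S^1_{T,c_1,c_2}\cup\mathcal S^2_{T,c_1,c_2}$ has a prime factor in $E_T$ and another prime factor below $T$. Hence the count of such $N$ is at most $|E_T|\,\pi(T)$.

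Meanwhile $|\mathcal S_T|=\binom{\pi(T)}{2}\asymp \pi(T)^2$. The ratio is therefore
\[
\ll \frac{T^{3c_1+c_2}\log T}{\pi(T)}\ll T^{3c_1+c_2-1}(\log T)^2,
\]
which tends to $0$ when $3c_1+c_2<1$. This proves part (1).

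For part (2), suppose such a $C$ existed. Take $N<T^2$, so that $(\log N)^C\le(2\log T)^C<T^{\epsilon}$ for any fixed $\epsilon>0$ and large $T$. Then every large $N\in\mathcal S_T$ would lie in $\mathcal S^1_{T,\epsilon,\epsilon}\cup\mathcal S^2_{T,\epsilon,\epsilon}$. The semiprimes of $\mathcal S_T$ that are not large form a set of bounded size, which is negligible against $|\mathcal S_T|$. Taking $\epsilon<1/4$ then contradicts part (1).

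I expect the main obstacle to be the case bookkeeping in the second step, specifically checking that every pair $(m,a)$ is covered by some part of Proposition \ref{P10} or by the direct $\Delta_{m;a}\neq0$ argument. In particular I need to confirm that the asymmetric roles of $p$ and $q$ in Proposition \ref{P10} are harmless, which I will handle by including both primes' conditions in the union. The counting itself is routine.
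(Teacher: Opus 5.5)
Your proposal is correct and follows essentially the same route as the paper. You reduce $(\sharp_{m;a})$ and $(\natural_{m;a})$ to a prime factor of $N$ dividing $\Delta^0_{m;a}$ or $\tilde\Delta_{m;a}$ via Lemma \ref{L14}(3) and Proposition \ref{P10}, bound the exceptional primes by $\sum_{m,a} m^2\log_3(|a|+2)\ll T^{3c_1+c_2}\log T$, and compare $|E_T|\,\pi(T)$ with $|\mathcal S_T|\asymp \pi(T)^2$. Your extra bookkeeping fills in details the paper's proof leaves implicit: the cases $a\in\{0,\pm1,-2\}$ with $\Delta_{m;a}\neq 0$, which Proposition \ref{P10} does not literally cover; the exceptional prime $7$; and the explicit derivation of part (2) with $\epsilon<1/4$.
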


\begin{proof} For large $T$, as the number of primes bounded by $T$ is equal to $\frac{T}{\log T}+ o(\frac{T}{\log T})$ (see \cite{T}, Theorem 3.7),
	we have:
	$$
	|\mathcal S_{T}|=\frac{1}{2}\Bigl( \frac{T}{\log T}+ o\Bigl(\frac{T}{\log T}\Bigr)\Bigr)^2 -\frac{1}{2}\Bigl( \frac{T}{\log T}+ o\Bigl(\frac{T}{\log T}\Bigr)\Bigr)= \frac{T^2}{2 (\log T)^2} + o\Bigl(\frac{T^2}{(\log T)^2}\Bigr) .
	$$
	
Let $\iota\in\{1,2\}$. Let $P^{\iota}_{m;a}$ be $\{p \: | \: p\: \text{is a prime and} \; p\mid\Delta^0_{m;a}\}$ if $\iota=1$ and be the set $P_{m,a}$ of Proposition \ref{P10} if $\iota=2$. The number of primes in the union 
$$\underset{(m, a)\in\llbracket1,\lfloor T^{c_1}\rfloor\rrbracket\times\llbracket -\lfloor T^{c_2}\rfloor,\lfloor T^{c_2}\rfloor\rrbracket}{\cup} P^{\iota}_{m;a},$$ is bounded by the following sum 
$$\underset{|a|\in\llbracket -\lfloor T^{c_2}\rfloor,\lfloor T^{c_2}\rfloor\rrbracket}{\sum}\:\underset{m\in\llbracket1,\lfloor T^{c_1}\rfloor\rrbracket}{\sum}m^2\log_3(|a|+2)$$ 
by Lemma \ref{L14}(2) if $\iota=1$ and by Lemma \ref{L12}(2) if $\iota=2$ and thus by 
	$3T^{3c_1}T^{c_2}\log_3(T^{c_2}+2)$; here a plus $1$ is added to the factor $3$ for each sub-sum with $a<0$ or $a=0$ or $a>0$.  So, for sufficiently large $T$, by Lemma \ref{L12}(3) if $\iota=1$ and by Proposition \ref{P10} if $\iota=2$ we have that $|\mathcal S^{\iota}_{T, c_1, c_2}|$ is strictly less than
\begin{equation*}
	 \Bigl( \frac{T}{\log T}+ o\Bigl(\frac{T}{\log T}\Bigr)\Bigr) 3T^{3c_1+c_2}\log_3(T^{c_2}+2) =
	\frac{3T^{3c_1+c_2+1}\log_3(T^{c_2}+2)}{\log T} + o\bigl(T^{3c_1+c_2+1}\bigr).
\end{equation*}
So part (1) follows from the inequalities
	$$
	0\le \lim_{T\to\infty} \frac{|\mathcal S^{\iota}_{T, c_1, c_2}|}{|\mathcal S_{T}|}\le \lim_{T\to\infty} 6T^{3c_1+c_2-1} \log_3(T^{c_2}+2) \log T = 0. 
	$$

Part (2) follows from part (1).\end{proof}

\begin{Rem}\normalfont\label{R5.5}
Let $\epsilon\in (0,1)$. All odd semiprimes in $\mathcal S_T$ are contained in $\mathcal S^1_{T,\epsilon,1+\epsilon}$ (resp.\ $\mathcal S^2_{T,1,\epsilon}$) by Proposition \ref{P8.1}(1) (resp.\  \ref{P6}(3)). Hence both limits $\lim_{T\to\infty} \frac{|\mathcal S^1_{T,\epsilon,1+\epsilon}|}{|\mathcal S_{T}|}$ and $
\lim_{T\to\infty} \frac{|\mathcal S^2_{T, 1,\epsilon}|}{|\mathcal S_{T}|}$ exist and are $1$. Thus if the pair $(\varepsilon_1,\varepsilon_2)\in (0,\infty)^2$ is such that for every pair $(c_1,c_2)\in (0,\infty)^2$ with $\varepsilon_1 c_1+\varepsilon_2c_2<1$, both limits $\lim_{T\to\infty} \frac{|\mathcal S^1_{T,c_1,c_2}|}{|\mathcal S_{T}|}$ and $
\lim_{T\to\infty} \frac{|\mathcal S^2_{T,c_1,c_2}|}{|\mathcal S_{T}|}$ exist and are $0$, then $\min(\varepsilon_1,\varepsilon_2)\ge 1$.
\end{Rem}	

\begin{Rem}\normalfont\label{R6}
	Let $(n, m, r)\in \mathbb{N}^3$ with $\gcd(r, m)=1$. Then, unlike positive characteristics, in an algebraic closure $\overline{\mathbb{Q}}$ 
	of the field $\mathbb{Q}$ of rational numbers, the equation $\sum_{i=1}^n \al_i X_i^r =0$, with $(\al_1, \ldots, \al_n)\in \mathbb{Q}^n$, has no solution 
	$(A_1, \ldots, A_n)\in \overline{\mathbb{Q}}^n$ such that $A_1^m=\cdots=A_n^m=1$ and $\sum_{i=1}^n \al_i A_i \neq 0$.
	Indeed, $\zeta_m$ and $\zeta_m^r$
	are roots of the irreducible cyclotomic polynomial $\Phi_m(X)$ in $\mathbb{Q}[x]$. Let $\sigma : \mathbb{Q}(\zeta_m) \rightarrow \mathbb{Q}(\zeta_m)$ be the automorphism with $\sigma(\zeta_m)=\zeta_m^r$. If $A_1^m=\cdots=A_n^m=1$, then for each $1\leq i\leq n$ there exists $r_i\in \llbracket0,m-1\rrbracket$
	such that $A_i=\zeta_m^{r_i}$. Thus,
	$$
	\sigma\Bigl(\sum_{i=1}^n \al_i A_i\Bigr)=\sigma\Bigl(\sum_{i=1}^n \al_i \zeta_m^{r_i}\Bigr)=\sum_{i=1}^n \al_i \sigma(\zeta_m)^{r_i}
	=\sum_{i=1}^n \al_i \zeta_m^{rr_i}=\sum_{i=1}^n \al_i A_i^{r}.
	$$
	This implies that $\sum_{i=1}^n \al_i A_i=0$ iff $\sum_{i=1}^n \al_i A_i^r=0$. 
\end{Rem}

\begin{De}\label{D4}
	Let $N\in\mathbb T$. 
	
	\medskip
	{\bf (1)} By the first cyclotomic set of $N$ we mean the set $\mathbb E_1(N)$ formed by all pairs of integers $(m,a)\in\mathbb N\times \mathbb Z$ such that $N$ has property $(\sharp_{m; a})$.
	
	\smallskip
	{\bf (2)} By the second cyclotomic set of $N$ we mean the set $\mathbb E_2(N)$ formed by all pairs of integers $(m,a)\in\mathbb N^2$ such that $N$ has property $(\natural_{m; a})$.
	
	\smallskip
	{\bf (3)} By the cyclotomic complexity of $N$ we mean $$\C_N=\C_N^{\mathbb Q}:=\min\left(\frac{\log(m+|a|)}{\log\log N}\Big|(m,a)\in\mathbb E_1(N)\cup\mathbb E_2(N)\right).$$

	
\end{De}

The cyclotomic complexity of $N\in\mathbb T$ measures the complexity of finding a proper divisor of $N$ greater than $1$.

For $N\in \mathbb{T}$ we check that for each prime divisor $p$ of $N$ we have $(1, p+2)\in\mathbb E_1(N)$.
The Division Algorithm of Proposition \ref{P8} applied modulo $N$ to the pair $\bigl(g_{-1}(X), g_0(X)\bigr)$ equal to $\bigl(X-1+(N),p+1-X+(N)\bigr)$ produces the prime divisor $p$ of $N$. As $(1, p+2)\in \mathbb E_1(N)$, it follows that $\C_N$ exists and we have inequalities $\C_N\le \frac{\log\bigl(\min(p|p\in\Div(N))+3\bigr)}{\log\log N}< \frac{\log(\sqrt{N}+3)}{\log\log N}$. For each $C\in (0, 1)$, there exists $N\in \mathbb T$ such that $\frac{(\log N)^C}{\log\log N}< C_N$ by Corollary \ref{C5}(2). If $N$ is odd, then $\bigl(\min(p|p\in\Div(N))-1,2\bigr)\in \mathbb E_2(N)$ by Corollary \ref{C4.5}(2) and by Corollary \ref{C4.5}(1) and (4) we have
$$\C_N\le \frac{\log\bigl(2\min(p|p\in\Div(N)\bigr)}{\log\log N}\le\frac{\log\bigl(2(\sqrt{N+1}-1)\bigr)}{\log\log N}<\frac{\log(2\sqrt{N})}{\log\log N}=\frac{1}{2}\frac{\log 4N}{\log\log N}.$$

We have $\lim_{T\to\infty} \max(C_N|N\in\mathbb N, N\le T)=\infty$ by Corollary \ref{C5}(2).	

\begin{Rem}\normalfont\label{R7}			
	Let $(m, a)\in\mathbb N\times\mathbb Z$ be such that $N$ has property $(\sharp_{m:a})$ or $(\natural_{m;a})$ and we have $C_N=\frac{\log (m+|a|)}{\log \log N}$. This implies that $m\leq (\log N)^{C_N}$ and $|a|\leq (\log N)^{C_N}$. If $N$ has property $(\sharp_{m;a})$, then following the first part of the proof of Theorem \ref{TH3}, the complexity of finding a proper divisor of $N$ is bounded by
	$O\Big((\log N)^{4C_N}+(\log N)^{3C_N} (\log N) M(\lfloor \log_2 N) \rfloor +1)\Big)$.
	Similarly, if $N$ has property $(\natural_{m;a})$, then by Theorem \ref{TH3} the complexity of finding a proper divisor of $N$ is bounded by
	$O\Big((\log N)^{6C_N}+(\log N)^{5C_N} (\log N)^2 M(\lfloor \log_2 N) \rfloor +1)\Big)$.			
\end{Rem}

\begin{Ex}\normalfont\label{EX5}
Let $p$ and $q$ be two distinct odd primes such that one of them is a Mersenne prime $2^l-1$; so $l\in\mathbb N$ is a prime. The set $O:=\{o_p(2),o_q(2)\}$ has two elements and contains $l$. Let $o:=\min(O)$ and $N:=pq$; we have $o\le l\le \log_2(\frac{N}{3}+1)$. The semiprime $N$ has property $(\sharp_{o,4})$ by Corollary \ref{C4.5}(1) and therefore $C_N\le\frac{\log\bigl( \log_2(\frac{N}{3}+1)+4\bigr)}{\log\log N}$.
\end{Ex}

\begin{Rem}\normalfont\label{R8}
For $l\in\mathbb N$, let $\omega(l)\in\mathbb N\cup\{0\}$ be the number of distinct prime factors of $l$. Let $N\in\mathbb T$ be odd. Let $p$ and $q$ be distinct prime divisors of $N$ with $p<q$. The Generalized Riemann Hypothesis (GRH) implies that 
$$g(q)\ll \omega(q-1)^4\Bigl[\log\bigl(\omega(q-1)\bigr)+1\Bigr]^4\log (q)^2$$ 
(see \cite{Sh}, Theorem 1.3; see also \cite{MTT}, Theorem 2). We have $\omega(l)=O(\frac{\log(l)}{\log (\log l)})$ by \cite{HW}, Section 22.10, Page 471. As there exist pairs $\bigl(o,2g(q)\bigr)\in\mathbb E_1(N)$ by Corollary \ref{C4.5}(3) with $o\mid p-1$, it follows that GRH implies that there exist pairs $(m,a)\in\mathbb E_1(N)$ such that $m\le\sqrt{N}$ and $2\le a\ll O\bigl(\log(N)\bigr)^3$. Based on this and analogs of \cite{Sh}, Theorem 1.3 for primitive roots in finite fields (see \cite{Sh}, Theorems 1.1 and 1.2), one would like to identify quadruples $(C_1,C_2,C_3,C_4)\in (0,\infty)^3\times (0, 1]$ such that $C^K_N\le C_1\frac{(\log C_3N)^{C_4}}{\log\bigl(\log(N)\bigr)^{C_2}}$ for all odd $N\in\mathbb T$, where $K$ is a number field and $C^K_N$ would be defined similarly to $C_N^{\mathbb Q}$ but with the role of $\mathbb Z$ replaced by the ring of integers of $K$ (to be compared with the number field sieve).
\end{Rem}

\section{Algorithms}\label{S8}

First we have the Dichotomic Euclidean Algorithm in one variable modulo $N$ (see Proposition \ref{P8}).

\begin{framed}\label{ALG1}
	\noindent
	\textbf{Algorithm 1 (in 8 Steps).}
	
	\medskip
	
	\noindent
	\textbf{Input:} $N\in\mathbb N\setminus\{1\}$ and pair $\bigl(f(x), g(x)\bigr)\in \mathbb{Z}[x]^2$ with $\deg(g)\leq \deg(f)$ and $g$ monic.
	
	\begin{enumerate}[label=\arabic*.]
		
		\item Replace each coefficient of $f(x)$ and $g(x)$ by its residue modulo $N$.
		
		\item Using division, Divide $f(x)$ by $g(x)$ to obtain
		\[
		f(x) = q(x) g(x) + h(x), \quad \bigl(q(x),h(x)\bigr)\in \mathbb{Z}[x]^2, \quad \deg(h) < \deg(g).
		\]
		
		\item If $h(x) \equiv 0 \pmod{N}$, then output $h(x):=g(x)$ and STOP.
		
		\item If $h(x) \not\equiv 0 \pmod{N}$, then let $l$ be the leading coefficient of $h(x)$. Replace $l$ by its residue modulo $N$.
		
		\item If $1 < \gcd(l, N)$, then output $d:=\gcd(l, N)$ and STOP.
		
		\item If $\gcd (l, N)=1$, then compute $l'\in\llbracket1,N-1\rrbracket$ such that $l l' \equiv 1 \pmod{N}$.

		\item Set $f(x) := g(x)$. 
		
		\item Set $g(x) := l' h(x)$ and go to Step 1. 
		\end{enumerate}
\noindent		
	\textbf{Output:} either a polynomial $h(X)\in \mathbb{Z}[X]$ with $\bigl(f(X), g(X), N\bigr)=\bigl(h(X), N\bigr)$ and coefficients in $\llbracket 0,N-1\rrbracket$ or a $d\in\Div(N)\setminus\{1,N\}$. 	 
	
\end{framed}

The next algorithm searches for pairs $(m,a)\in\mathbb E_1(N)\cup\mathbb E_2(N)$ with $\frac{m+a}{N}$ `small'.

 \begin{framed}\label{ALG3}
	\noindent
	\textbf{Algorithm 2 (in 21 Steps).}
	
	\medskip
	\noindent
	\textbf{Input:} $N\in \mathbb{T}$ (composite integer $N$ with at least $2$ prime divisors).
	
	\begin{enumerate}[label=\arabic*.]
		
		\item Set $m := 3$ and $B := 2\log N$. 	
		
		\item If $m>B$, then STOP with no output. If $m\leq B$, then compute $e_m :=\gcd(m,N)$. If $e_m>1$, then output $d:=e_m$ and STOP.
		If $e_m=1$, then set $r:=2$, $a :=1$.
		
		\item If $r>m-1$, then set $m := m+1$ and go to Step 2. If $\gcd(r, m)=1$, then compute $r'\in\llbracket1,m-1\rrbracket$ such that $r r'=1 \pmod m$, and go to Step 5.
		
		\item If $\gcd(r, m)>1$, then set $r:= r+1$ and go to Step 3. 
		
		\item Set $h(x):= (x+ (a-x^r)^{r'}-a)^N \in \mathbb{Z}[x]$. 
		
		\item Reduce $h(x)$ modulo $x^m-1$ using fast exponentiation (binary exponentiation). 
		
		\item Replace each coefficient of $h(x)$ by its residue modulo $N$.
		
		\item Set $f(x):= (a-x^r)^{m}-1 \in \mathbb{Z}[x]$, $g(x):= x^{m}-1\in \mathbb{Z}[x]$.
		
		\item Replace each coefficient of $f(x)$ and $g(x)$ by its residue modulo $N$. 
		
		\item Divide $f(x)$ by $g(x)$ to obtain
		\[
		f(x) = q(x) g(x) + g_1(x), \quad \bigl(q(x), g_1(x)\bigr)\in \mathbb{Z}[x]^2, \quad \deg(g_1) < \deg(g).
		\]
		
		\item If $g_1(x) \equiv 0 \pmod{N}$, set $\theta(x):= g(x)$ and go to Step 15. If $g_1(x) \not\equiv 0 \pmod{N}$, then replace each coefficient of $g_1(x)$ by its residue modulo $N$.
		
		\item Extract the leading coefficient $l$ of $g_1(x)$. If $1 < \gcd(l, N)$, then output $(d, m, a, 1)$ with $d:=\gcd(l, N)$ and STOP.
		
		\item If $\gcd(l, N)=1$, then set $f(x) := g(x)$ and compute $l'\in\llbracket1,N-1\rrbracket$ such that $N\mid ll'-1$.
		
		\item Set $g(x) := l'g_1(x)$ and go to Step 9.
		
		\item Divide $h(x)$ by $\theta (x)$ to obtain
		\[
		h(x) = \vartheta(x) \theta (x) + \eta (x), \quad \bigl(\vartheta(x), \eta(x)\bigr)\in \mathbb{Z}[x]^2, \quad \deg(\eta) < \deg(\theta).
		\]
		
		\item Replace each coefficient of $\eta(x)$ by its residue modulo $N$.
		
		\item For $j\in\llbracket 0,\deg(\eta)\rrbracket$, extract the coefficient $c_j$ of $x^j$ in $\eta(x)$.
		
		\item For each $j\in\llbracket 0,\deg(\eta)\rrbracket$, set $b_j := \gcd (c_j, N)$.
		
		\item If there exists $j\in\llbracket 0,\deg(\eta)\rrbracket$ with $b_j >1$, then output $(d, m, a, 2)$ with $d:=b_j$ and STOP. If $b_j=1$ for each $j\in\llbracket 0,\deg(\eta)\rrbracket$, then continue. 
		
		\item If $a>0$, then set $a := -a$ and go to Step 5. If $a<0$, then set $a:=|a|+1$.
		
		\item If $a < B$, then go to Step 5. If $a> B$, then set $r := r+1$, $a :=1$ and go to Step 3. 
	\end{enumerate}
	
\noindent		
	\textbf{Output:} either nothing or a $d$ or a quadruple $(d, m, a, i)$, where $d\in\Div(N)\setminus\{1,N\}$, $i\in\{1,2\}$, and $(m, a)\in \mathbb E_i(N)$.
\end{framed}
 
For the input $\bigl(f(X), g(X), N\bigr)=\bigl(X^3-1, (10-X)^3-1, 77\bigr)$, Algorithm 1 outputs the polynomial $h(X)=X^2+67X+23$. This implies that $(3, 10)\notin \mathbb E_1(77)$.
 On the other hand $X^{11}+Y^{11} = 10$ together with $X+Y\neq 10$ has solution $(X, Y ) = (1, 4)$ in $\mathbb F_7$. So $(3, 10) \in \mathbb E_2(77)$.
 
For the input $N$ equal to $77$ (or in $\{143,187,209\}$ or equal to $221$), Algorithm 2 outputs $(d, 3, 1, 2)$ with $d=7$ (or $d=11$ or $d=13$). 

Next we exemplify the calculations of Algorithm 2.

\begin{Ex}\normalfont\label{EX6}
Let $N=77$. 

Step 1. Set $m=3$ and $B=2\log 77 \approx 8.6876$.

Step 2. As $m=3<B$, compute $e_m= \gcd(3, 77)=1$. No factor is found. Set $r=2$ and $a=1$.

Step 3. Check $r=2\leq m-1=2$. Also $\gcd(2, 3)=1$. Compute $r'<3$ such that $2r'=1 \pmod 3$. Thus $r'=2$.

Step 4. $\gcd(2, 3)=1$ so continue.

Steps 5-7. Set $h(X)=(X+(1-X^2)^2-1)^{77}$; reduce $h(X)$ modulo $x^3-1$ and modulo $77$. 

Step 8. Define $f(X)=(1-X^2)^3-1=-3X^2+3X^4-X^6$ and $g(X)=X^3-1$.

Step 9. Replace each coefficient of $f(X)$ and $g(X)$ by its residue modulo $77$. We obtain $f(X)=76X^6+3X^4+74X^2$ and $g(X)=X^3+76$.

Step 10. Divide $f(X)$ by $g(X)$. The reminder is $g_1(X)=74X^2-228X+438976$.

Step 11. $g_1(X)\not \equiv 0 \pmod {77}$, so continue.

Step 12. The leading coefficient of $g_1(X)$ is $74$. Compute $\gcd(74, 77)=1$. 

Step 13. Compute $l'\in\llbracket1,76\rrbracket$ such that $l l'=1 \pmod {77}$. Therefore $l'=51$ and we set $f(X)=X^3+76$. 

Step 14. Replace $g(X)$ by $51(74X^2-228X+438976)$. Thus $g(X)=X^2+76X+26$ modulo $77$. Return to Step 9.

Steps 9-11. We get $g_1(X)=5750X+2052 \not\equiv 0 \pmod {77}$.

Step 12. Leading coefficient $l=5750$. Compute $\gcd(5750, 77)=1$.

Step 13. Compute $l'\in\llbracket1,76\rrbracket$ such that $l l'=1 \pmod {77}$. Therefore $l'=40$ and we set $f(X)=X^2+76X+26$.

Step 14. Replace $g(X)$ by $40(5750X+2052)$. Thus $g(X)=X+75$ modulo $77$. Return to Step 9.

Steps 9-11. We get $g_1(X)=-49 \not\equiv 0 \pmod {77}$.

Step 12. Leading coefficient $l=-49$. Compute $\gcd(-49, 77)=7>1$.

Output: $(d, m, a, i)=(7, 3, 1, 1)$ so the algorithm discovers the nontrivial factor $7$ of $77$.
\end{Ex}

{\bf Acknowledgments.} Second author would like to thank SUNY Binghamton for good working conditions.

\end{document}